\theoremstyle{plain}
\newtheorem{thm}{Theorem}[section]
\newtheorem{corollary}[thm]{Corollary}
\newtheorem{lemma}[thm]{Lemma}
\newtheorem{prop}[thm]{Proposition}
\newtheorem*{theorem*}{Theorem}
\theoremstyle{definition}
\newtheorem{rem}[thm]{Remark}
\newtheorem{assumption}{Assumption}
\numberwithin{equation}{section}
\newcommand{\mytilde}{\raise.17ex\hbox{$\scriptstyle\mathtt{\sim}$}}
\newcommand{\Rat}[1]{\ensuremath{\mathbb{R}^{#1}}}
\renewcommand{\P}{\mathcal{P}}
\newcommand{\bE}{\ensuremath{\mathbb{E}}}
\newcommand{\eps}{\ensuremath{\varepsilon}}
\newcommand{\longthmtitle}[1]{\mbox{}{\bf \textit{(#1).}}}
\title[Structured ambiguity sets]{Structured ambiguity sets for distributionally robust optimization}
\begin{document}

\author{Lotfi M. Chaouach}

\author{Tom Oomen}

\author{Dimitris Boskos}

\thanks{All the authors are with the Delft Center for Systems and Control, Faculty of Mechanical, Maritime and Materials Engineering, Delft University of Technology. Tom Oomen is also with the Department of Mechanical Engineering of Eindhoven  University of Technology, \tt{\{L.Chaouach,D.Boskos\}@tudelft.nl,T.A.E.Oomen@tue.nl}.}

\maketitle

\begin{abstract}
Distributionally robust optimization (DRO) incorporates robustness against uncertainty in the specification of probabilistic models. This paper focuses on mitigating the curse of dimensionality in data-driven DRO problems with optimal transport ambiguity sets. By exploiting independence across lower-dimensional components of the uncertainty, we construct structured ambiguity sets that exhibit a faster shrinkage as the number of collected samples increases. This narrows down the plausible models of the data-generating distribution and mitigates the conservativeness that the decisions of DRO problems over such ambiguity sets may face. We establish statistical guarantees for these structured ambiguity sets and provide dual reformulations of their associated DRO problems for a wide range of objective functions. The benefits of the approach are demonstrated in a numerical example.
\end{abstract}

\section{Introduction}

Uncertainty in decision-making is abundant across engineering and science.  Events with unpredictable outcomes add an additional layer of complexity to the decision-making process in view of the need to strike a balance between performance and risk aversion. To this end, stochastic approaches quantify the uncertainty using a probabilistic model that characterizes the range and frequency of possible outcomes~\cite{KM:15}. In stochastic optimization, the uncertain parameters are typically assumed to follow a known distribution~\cite{AS-DD-AR:14}. This, in turn, guarantees that the solution of the optimization problem enjoys some desired statistical properties. However, in practical scenarios, the true probability distribution is often uncertain and it is hard to infer it from data with sufficient accuracy. Being uncertain about the uncertainty itself can generate unreliable decisions, which may in turn lead to undesirable risks and failures of complex engineered systems. This makes addressing distributional uncertainty a problem of high importance.  

Distributionally robust optimization (DRO) makes decisions in the face of uncertainty without resorting to a single probability distribution. Instead, it robustifies stochastic optimization problems by considering an ambiguity set of plausible models for the unknown distribution of the uncertainty~\cite{WW-DK-MS:14}. This way, DRO hedges against model misspecification due to insufficient or corrupted data, which is the typical situation in real-life systems across engineering, finance, machine learning, medicine, and social sciences. There is, therefore, an increasing interest in exploiting DRO for stochastic decision problems, which are widespread in operations research~\cite{DB-DBB-CC:11}, statistical learning~\cite{DK-PME-VAN-SAS:19,RC-ICP:18}, and control~\cite{BPGVP-DK-PJG-MM:15,BL-YT-AGW-GRD:22}. Toward applications of DRO in control,  \cite{IT-CDC-CNH:21} develops a distributionally robust LQR framework. Data-driven formulations of Wasserstein distributionally robust stochastic control are found in \cite{IY:21, LA-MF-JL-FD:23} and  \cite{JC-JL-FD:21}, while \cite{SSA-VAN-DK-PME:18} provides a Kalman filtering design that accounts for distributional uncertainty. The problem of propagating optimal transport ambiguity sets is considered in \cite{DB-JC-SM:21-tac,DB-JC-SM:23,LA-NL-HC-FD:23}, which take into account multiple data assimilation nonidealities. 
Further applications of DRO include economic dispatch in power systems \cite{BKP-ARH-SB-DSC-AC:20}, congestion avoidance in traffic control~\cite{DL-DF-SM:19-ecc}, and motion planning in dynamic environments \cite{AH-IY:21}.

There are multiple choices of ambiguity sets. In data-driven cases, these choices affect both the statistical properties and the tractability of their associated DRO problems. Typical ambiguity sets are constructed using statistical divergences~\cite{GCC-LEG:06,RJ-YG:16}, moment constraints~\cite{IP:07,ED-YY:10}, total variation metrics~\cite{IT-CDC-TC:15}, and optimal transport discrepancies~\cite{GP-DW:07}, such as the Wasserstein distance~\cite{CV:08}. Among the favorable properties of  Wasserstein ambiguity sets are tractable reformulations of their associated DRO problems~\cite{PME-DK:17,RG-AJK:23,JB-KM:19} and rigorous statistical guarantees \cite{NF-AG:15} of containing the data-generating distribution. 
In particular, for a given confidence level, the size of these ambiguity sets decreases with respect to the number of collected samples \cite{NF-AG:15}. Nevertheless, this decay rate suffers from the curse of dimensionality as it becomes excessively slow with the number of samples for high-dimensional data~\cite{SD-MS-RS:13,NF-AG:15,JW-FB:19}. To ameliorate this drawback, a recent line of work informs the ambiguity set by the specific optimization problem, rendering the ambiguity-size decay rate independent of the dimension of the uncertainty~\cite{JB-YK-KM:16,JB-KM-NS:21,SS-DK-PME:19,RG:22,NS-JB-SG-MS:20}. 
There is also DRO literature, which considers optimal transport ambiguity sets that take into account structural properties of the unknown distribution, like heterogeneity or information about its marginals. To this end, \cite{JB-YK-KM-FZ:19} builds Wasserstein ambiguity balls using a Mahalanobis distance that allocates a higher transport cost to directions with a larger impact on the expected loss, while \cite{JB-KM-FZ:22} considers a state-dependent variant of this distance. A distributionally robust decision framework for ambiguity sets of multivariate distributions with known marginals is provided in \cite{RG-AJK:17}, which encodes dependency variations through the Wasserstein distance, while \cite{DB-MK-TL-AP-SE:22} establishes optimal transport duality for ambiguity sets that are defined through Fr\'echet classes and allow variations of their marginals.  

Although important steps have been taken to develop adequate DRO approaches to address complex data-driven problems, the curse of dimensionality with respect to the dimension of the uncertainty still persists in important classes of problems. These include model predictive control~\cite{PC-PP:22, FW-MEV-BH:22, LA-MF-JL-FD:23}, controller synthesis for stochastic reach-avoid specifications~\cite{IG-DB-LL-MM:23}, and distributionally robust dynamic programming~\cite{IY:21}, which involve solving multiple optimization problems under the same uncertainty.
The aim of this paper is to address the curse of dimensionality that characterizes Wasserstein ambiguity sets when they are accompanied by the requirement to contain the unknown distribution with a prescribed probability. To this end, we build new classes of optimal transport ambiguity sets, which shrink at favorable rates with the number of samples while containing the true distribution with a fixed confidence.
Obtaining these probabilistic guarantees necessitates further assumptions regarding the class to which the distribution belongs. In this paper, we assume independence between lower dimensional components of the random variable and build ambiguity sets with distributions that share similar structural properties. Besides the improved statistical guarantees that accompany these ambiguity sets, which we call \textit{structured ambiguity sets}, we also provide dual reformulations of their corresponding DRO problems.

Our first contribution is the introduction of two classes of structured ambiguity sets, which we call Wasserstein hyperrectangles and optimal-transport hyperrectangles, respectively. The former are designed to contain only product distributions while the latter contain distributions that simultaneously respect multiple optimal transport constraints. Our second contribution is to show that both ambiguity sets shrink faster than traditional Wasserstein balls in data-driven scenarios while containing the true distribution with the same confidence level. This is established under independence of lower-dimensional components of the random variable and breaks the curse of dimensionality when these components are of sufficiently small dimension. Our third contribution is the derivation of dual reformulations of DRO problems associated with these ambiguity sets. Due to the convexity of multi-transport hyperrectangles, which is in principle not shared by Wasserstein hyperrectangles, their DRO problems admit dual reformulations for a much broader class of objective functions. Preliminary results introducing the concept of Wasserstein hyperrectangles have appeared in \cite{LMC-DB-TO:22}. The contributions of the present paper extend far beyond \cite{LMC-DB-TO:22}, including 1) the proofs of the results in \cite{LMC-DB-TO:22} 2) the new notion of multi-transport hyperrectangles and their more elaborate duality theory, which applies to a substantially broader class of objective functions, and, 3) a more complete treatment of the probabilistic guarantees that are associated with both classes of ambiguity sets.   

This paper is organized as follows. In Section~\ref{sec:prelims}, we introduce mathematical preliminaries and notation. We formulate the problem in  Section~\ref{sec:problem:formulation} and introduce two classes of structured ambiguity sets in Section~\ref{sec:hyperrectangles}. In Section~\ref{sec:statistical:guarantees}, we provide probabilistic guarantees for these ambiguity sets and we present dual reformulations for their associated DRO problems in Section~\ref{sec:duality}. In Section~\ref{sec:example}, we illustrate the results of the paper in a simulation example.   

\section{Preliminaries and notation}
\label{sec:prelims}

Throughout this paper, we use the following notation. We denote by $\|\cdot\|_p$ the $p$th norm in $\mathbb{R}^d$ with $p\in[1,\infty]$. We denote by $\mathbb R_{\ge0}$ and $\mathbb R_{>0}$ the positive and strictly positive real numbers, respectively, and define $\bar{\mathbb R}:=\mathbb R\cup\{-\infty,+\infty\}$. For $N\in\mathbb N\backslash\{0\}$, we denote $[N]:=\{1,\ldots,N\}$. The diameter of $S\subset\mathbb{R}^d$ is $\mathrm{diam}(S):=\mathrm{sup}\{\left\lVert x-y\right\rVert_\infty:x,y\in S\}$. We denote by $C(\Xi)$ the class of continuous real-valued functions on a topological space $\Xi$, and by $C_{{\rm const},2}(\Xi\times\Xi)$  the functions  $\gamma\in C(\Xi\times\Xi)$ with  $\gamma(\zeta,\xi)=\gamma(\zeta,\xi')$ for all $\zeta,\xi,\xi'\in \Xi$. Given the set $\Xi=\Xi_1\times\cdots\times\Xi_n$ and $k\in[n]$, we define the projection ${\rm pr}_k:\Xi\to \Xi_k$ as ${\rm pr}_k(\xi):=\xi_k$, for all $\xi=(\xi_1,\ldots,\xi_n)\in\Xi$, and define analogously ${\rm pr}_{k,l}$ when projecting to two components indexed by $k,l\in[n]$. Given a normed linear space $X$ and its topological dual $X^*$, the conjugate of a  function $h:X \to\mathbb R\cup\{+\infty\}$ is defined by $h^*(x^*):=\sup_{x\in X}\{\langle x^*,x\rangle-h(x)\}$. For a vector space $X$ and a convex cone $K\subset X$, we denote by $\succeq_K$ the order with respect to $K$, given by $x\succeq_K y$ iff $x-y\in K$ and will omit the dependence on $K$ when it is clear from the context. For example, the order  $\succeq$ in $\Rat{d}$ with respect to the positive cone  $\Rat{d}_{\ge 0}:=\{(x_1,\ldots,x_d)\in\Rat{d}:x_k\ge 0\;\textup{for all}\;k\in[d]\}$ implies that $x\succeq y$ iff $x_k\ge y_k$ for all $k\in[d]$. 

\textit{Probability theory:} Let $\Xi$ be a Polish space, namely, a  complete and separable metric space. We denote by $\rho$ the metric on $\Xi$, by $\mathcal{B}(\Xi)$ its Borel $\sigma$-algebra, and by $\mathcal{P}(\Xi)$ the space of probability measures on $(\Xi,\mathcal{B}(\Xi))$. The Dirac distribution centered at $\xi\in\Xi$ is denoted by $\delta_\xi$. The indicator function $\mathds 1_\Theta$ of $\Theta\subset\Xi$ is $\mathds 1_{\Theta}(\xi):=1$ if $\xi\in\Theta$ and $0$ otherwise. 
Given the measurable spaces $(\Omega,\mathcal F)$ and $(\Omega',\mathcal F')$, a measurable map $\Psi: (\Omega,\mathcal F) \to (\Omega',\mathcal F')$ assigns to each (signed) measure $\mu$ in $(\Omega,\mathcal F)$ the pushforward measure $\Psi_\#\mu$ in $(\Omega',\mathcal F')$ defined by $\Psi_\#\mu(B):=\mu(\Psi^{-1}(B))$ for all $B\in\mathcal F'$. We denote by $P\otimes Q$ the product measure of $P$ and $Q$. For any $P\in\mathcal{P}(\Xi)$, its support is the closed set $\mathrm{supp}(P):=\{x\in\Xi:P(U)>0\;\text{for each neighborhood}\;U\;\text{of}\;x\}$. Given a function $X:\Omega\to\Xi$ with the $\sigma$-algebra $\mathcal B(\Xi)$ we denote by $\sigma(X)$ the $\sigma$-algebra generated by $X$ on $\Omega$. The universal $\sigma$ algebra on $\Xi$ is defined as $\mathcal U(\Xi):=\cap_{P\in\P(\Xi)}\mathcal B_P(\Xi)$ (cf. \cite[Definition 7.18]{DB-SES:96}), where $\mathcal B_P(\Xi)$ refers to the completion of the $\sigma$-algebra $\mathcal B(\Xi)$ with respect to the measure $P$ (cf. \cite[Remark 1.70]{AK:13}) and  satisfies $\mathcal B(\Xi)\subset\mathcal B_P(\Xi)$. We denote by $\mathfrak m_{\mathcal U}(\Xi;\Rat{}\cup\{+\infty\})$ the space of measurable functions from $(\Xi,\mathcal U(\Xi))$ to $\Rat{}\cup\{+\infty\}$ with its Borel $\sigma$-algebra. 
For any $p\geq1$, we denote by $\mathcal{P}_p(\Xi)$ the set of probability measures in $\mathcal{P}(\Xi)$ with finite $p$th moment. Given $P,Q\in\mathcal{P}_p(\Xi)$, their $p$th Wasserstein distance is 
\begin{align*}
    W_{p}(Q,P):=\underset{\pi\in \mathcal C(Q,P)}{\mathrm{inf}} \left\{\int_{\Xi\times\Xi} \rho(\zeta,\xi)^pd\pi(\zeta,\xi)\right\}^{\frac{1}{p}}
\end{align*}
(cf.~\cite{CV:08}). Each $\pi\in\mathcal C(Q,P)$ is a transport plan, i.e., a distribution on $\Xi\times\Xi$ with marginals $P=\rm{pr}_{2\#}\pi$ and $Q=\rm{pr}_{1\#}\pi$, respectively. The Wasserstein distance between $P$ and $Q$ is defined through the optimal cost to transfer the mass of one distribution to the other when the cost to transfer a unit of mass between two locations $\zeta$ and $\xi$ in $\Xi$ is $\rho(\zeta,\xi)^p$. By Katorovich duality (cf. \cite[Theorem 1.3]{CV:08}), the optimal transportation cost $W_{p}^p(Q,P)$ is equal to the value of its dual optimization problem 
\begin{align*}
   K_p(Q,P)=\sup_{\substack{(\psi,\phi)\in {L}^1(Q)\times  {L}^1(P)\\ \phi(\xi)-\psi(\zeta) \le \rho(\zeta,\xi)^p}}\Big\{ \int_\Xi \phi(\xi)dP(\xi)-\int_\Xi \psi(\zeta) dQ(\zeta)\Big\}.
\end{align*}

\section{Problem formulation}
\label{sec:problem:formulation}

In this section, we introduce data-driven stochastic optimization problems and their distributionally robust formulations that hedge against model uncertainty. 
Consider the stochastic optimization problem 
\begin{equation}
\underset{x\in\mathcal{X}}{
\mathrm{inf}}\mathbb{E}_{P_\xi}\big[f(x,\xi) \big], \label{problem 1}
 \end{equation}
 where $f$ is the objective function, $x\in\mathcal{X}$ is the decision variable, and $\xi$ is a random variable, which takes values in a Polish space $\Xi$ and has distribution $P_\xi$. 
 
A typical situation that fits into \eqref{problem 1} is when the distribution $P_\xi$ is unknown and there is only access to a finite number of i.i.d. samples $\xi^1,\ldots,\xi^N$ of $\xi$. The usual approach to approximate the solution of  \eqref{problem 1} in this case is to replace $P_\xi$ by the empirical distribution $P_\xi^N:=\frac{1}{N}\sum_{i=1}^N\delta_{\xi^i}$. This is known as the Sample Average Approximation (SAA) of \eqref{problem 1} and it converges to the solution of the original problem in the asymptotic limit~\cite{AS-DD-AR:14}.
  
 \subsection{Distributionally robust optimization}
 
When the available data are limited, the empirical distribution $P_\xi^N$ may exhibit significant deviations from the true distribution $P_\xi$, which can in turn have a considerable impact on the discrepancy between the SAA and the original optimization problem. To address this issue, uncertainty in the distribution is incorporated into \eqref{problem 1} under the robust formulation
 \begin{equation}
     \inf_{x\in\mathcal X} \underset{P\in\mathcal{P}^N}{\mathrm{sup}}
     \mathbb{E}_{P}\big[f(x,\xi) \big]. \label{DR problem}
 \end{equation}
 In this distributionally robust optimization (DRO) problem, $\mathcal{P}^N$ is an ambiguity set of distributions that is inferred from the samples and contains plausible models of the true distribution. 
 
A well-established approach to construct data-driven ambiguity sets is to group all distributions that are $\varepsilon$-close to the empirical distribution $P_\xi^N$ in the $p$th Wasserstein metric for some $p\ge 1$ and $\eps>0$. In this case, $\mathcal{P}^N$ in \eqref{DR problem} is the ball 
\begin{align*}
\mathcal B_p(P_\xi^N,\varepsilon):=\{P\in\mathcal 
P_p(\Xi):W_p(P_\xi^N,P)\le\varepsilon\}
\end{align*}
with center $P_\xi^N$ and radius $\varepsilon$. Among the benefits of this choice are that Wasserstein distances yield higher penalties to distributional variations that are farther apart in the domain, which typically induce larger discrepancies on the optimization problems, and that Wasserstein balls lead to tractable DRO problems \cite{PME-DK:17}. In addition, for any number of samples, one can tune the radius of a Wasserstein ball so that it contains the true distribution with prescribed confidence. 
As a result, the value of \eqref{DR problem} provides an upper bound for the expected cost \eqref{problem 1} with prescribed confidence.   

\subsection{Structured ambiguity sets}  

The size of the ambiguity set $\mathcal{P}^N$ has a direct effect on the solution of \eqref{DR problem} since ambiguity balls of larger sizes may lead to conservative upper bounds for \eqref{problem 1}. This can happen because an ambiguity ball that is sufficiently large to contain the true distribution with a prescribed probability may also contain several irrelevant distributions. To address this issue, we consider some prior knowledge about the uncertainty, which can facilitate the construction of ambiguity sets whose elements are more appropriate models of the unknown distribution. We make the following assumption regarding the class of the random variable.
    
\begin{assumption}
\longthmtitle{Independent random variable components} (i) The random  variable $\xi$ takes values in the Polish space $\Xi=\Xi_1\times\cdots\times\Xi_n$, with $\Xi$ and $\Xi_k$, $k\in[n]$ equipped with the metrics $\rho$ and $\rho_k$, $k\in[n]$, respectively. (ii) The components $\xi_1,\ldots,\xi_n$ of $\xi$ are independent.
 \label{assumption:independence}
\end{assumption}
This assumption is reasonable in several problems such as in networked systems, where random inputs at different network locations do not essentially affect each other, or the deployment of multi-robot systems where individual agents are subject to independent disturbances. 

\noindent \textbf{Problem formulation.} Under Assumption~\ref{assumption:independence}, we seek to introduce  \textit{structure} in data-driven ambiguity sets so that they contain the true distribution with high probability while excluding implausible distributions and enabling the formulation of tractable DRO problems.

To this end, note that due to Assumption \ref{assumption:independence}, the distribution of $\xi$ is the product measure  
\begin{equation}
    P_{\xi}=P_{\xi_1}\otimes\cdots\otimes P_{\xi_n}, \label{distribution class}
\end{equation}
with $P_{\xi_k}$, $k\in[n]$ denoting the distributions of its components. Thus, instead of looking for plausible models of $P_\xi$ in an ambiguity ball, we can consider ambiguity sets whose distributions are only product measures, or at least sufficiently close to product measures. Such a  set should contain a restricted class of distributions, and therefore, yield less conservative solutions for \eqref{DR problem} under the same confidence. 

\subsection{Ambiguity radius}
 
By tuning the radius of the ambiguity ball, it is possible to guarantee that it contains the true distribution with prescribed probability. These guarantees hinge on concentration of measure results, which leverage prior assumptions about the class where the unknown distribution belongs, to bound the Wasserstein distance between the true and the empirical distribution. Such assumptions are the size of the distribution's support (e.g., \cite[Proposition 10]{NF-AG:15}, \cite{JW-FB:19}), its tail decay rate (e.g., \cite[Theorem 2, cases (1) and (2)]{NF-AG:15}), or bounds on its moments (e.g., \cite[Theorem 2, case (3)]{NF-AG:15}, \cite{JD-FM:19}). Based on these results, for any confidence $1-\beta$ and number $N$ of i.i.d. samples, we can select the ambiguity radius $\varepsilon(N,\beta)$ so that 
\begin{align} \label{confidence:bound:ball}
\mathbb P(P_\xi\in\mathcal B_p(P_\xi^N,\varepsilon))\ge 1-\beta.
\end{align}
The radius can typically be determined by a bound of the form  
\begin{align}
\varepsilon(N,\beta)\le K\frac{1}{N^{1/\max\{d,2p\}}},
\end{align}
where $d$ is the dimension of the random vector $\xi$. Therefore, for high-dimensional random variables, the decrease of the radius with respect to the number of samples becomes excessively slow. As a result, the exploitation of more data does not guarantee any significant improvement of the closeness between the true distribution and its empirical approximation, and hence, also of the size of the ambiguity ball. In this regard, we seek to \textit{exploit the independence Assumption~\ref{assumption:independence} for the components of $\xi$ and determine an ambiguity set structure that does not suffer from the curse of dimensionality with respect to $d$.}     
\section{Ambiguity hyperrectangles}
\label{sec:hyperrectangles}

In this section, we introduce two classes of structured ambiguity sets and provide some of their key statistical properties for data-driven problems. The starting point to construct these ambiguity sets are the lower-dimensional components of the random variable $\xi=(\xi_1,\ldots,\xi_n)$. Using $N$ i.i.d. samples $\xi^1,\ldots,\xi^N$, we first  build a lower-dimensional ambiguity ball $\mathcal B_p(P_{\xi_k}^N,\varepsilon_k)$ for each component of $\xi$, where $P_{\xi_k}^N:=\frac{1}{N}\sum_{i=1}^N\delta_{\xi_k^i}$ denotes its corresponding empirical distribution. From these balls, we construct the \textit{Wasserstein hyperrectangle}
\begin{subequations} \label{hyperrectangle:dfn:and:center}
\begin{align}
\mathcal H_p(\bm P_\xi^N,\bm\varepsilon):= \; & \{  P_{\xi_1}'\otimes\cdots\otimes P_{\xi_n}': P_{\xi_k}'\in  \mathcal B_p  (P_{\xi_k}^N,\varepsilon_k)\;\textup{for all}\;k\in[n]\} \label{hyperrectangle} \\
\bm P_\xi^N:= \; & P_{\xi_1}^N\otimes\cdots\otimes P_{\xi_n}^N,
\quad \bm\varepsilon=(\varepsilon_1,\ldots,\varepsilon_n), \label{rectangle:center:and:radius}
\end{align}
\end{subequations}
by taking the product measures across the individual distributions from the balls. We refer to the nominal model $\bm P_\xi^N$ around which the ambiguity set is built as the \textit{product empirical distribution}. 

Next, we establish probabilistic guarantees, which ensure that the Wasserstein hyperrectangles contain the distribution of $\xi$ with prescribed confidence. Later, we exploit these guarantees to alleviate the curse of dimensionality regarding the shrinkage of Wasserstein balls. The following result establishes the guarantees that a Wasserstein hyperrectangle inherits from its lower-dimensional constituent ambiguity balls when the components of $\xi$ are independent.

\begin{thm}\label{thm:hyperrectangle:confidence}
\longthmtitle{Probabilistic guarantees for Wasserstein hyperrectangles}
Assume that the random variable $\xi$ satisfies Assumption \ref{assumption:independence} and that $P_\xi\in\mathcal P_p(\Xi)$. 
Given i.i.d. samples $\xi^1,\ldots,\xi^N$ of $\xi$, let $P_{\xi_1}^N,\ldots,P_{\xi_n}^N$ be the empirical distributions of the individual components. Assume also that each Wasserstein ball $\mathcal B_p(P_{\xi_k}^N,\varepsilon_k)$ contains $P_{\xi_k}$ with confidence  $1-\beta_k$. Then the Wasserstein hyperrectangle $\mathcal{H}_p(\bm P_{\xi}^N,\bm\varepsilon)$ contains $P_\xi$ with confidence $\prod_{k=1}^n1-\beta_k$. 
\end{thm}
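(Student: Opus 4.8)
The plan is to reduce the statement to a purely probabilistic event-intersection argument, exploiting the product structure of both the hyperrectangle and the sample. First I would observe that, because the components $\xi_1,\ldots,\xi_n$ are independent, the $N$ i.i.d. samples $\xi^1,\ldots,\xi^N$ of $\xi$ induce, for each $k\in[n]$, a sequence of $N$ i.i.d. samples $\xi_k^1,\ldots,\xi_k^N$ of $\xi_k$, and moreover the $n$ sample-columns $\{(\xi_k^1,\ldots,\xi_k^N)\}_{k=1}^n$ are mutually independent as random vectors on the underlying product probability space. This is the key structural fact: each empirical marginal $P_{\xi_k}^N$ is a deterministic (measurable) function of the $k$-th column alone, so the events
\[
A_k:=\{P_{\xi_k}\in\mathcal B_p(P_{\xi_k}^N,\varepsilon_k)\},\qquad k\in[n],
\]
are mutually independent, and by hypothesis $\mathbb P(A_k)\ge 1-\beta_k$.

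Next I would show the deterministic inclusion
\[
\bigcap_{k=1}^n A_k\ \subseteq\ \{P_\xi\in\mathcal H_p(\bm P_\xi^N,\bm\varepsilon)\}.
\]
This is immediate from the definitions: on the event $\bigcap_k A_k$ each true marginal $P_{\xi_k}$ lies in the corresponding ball $\mathcal B_p(P_{\xi_k}^N,\varepsilon_k)$, and since $P_\xi=P_{\xi_1}\otimes\cdots\otimes P_{\xi_n}$ by \eqref{distribution class}, the product $P_\xi$ is by definition \eqref{hyperrectangle} an element of $\mathcal H_p(\bm P_\xi^N,\bm\varepsilon)$ with the witnessing factorization $P_{\xi_k}'=P_{\xi_k}$. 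One should also note in passing that $P_\xi\in\mathcal P_p(\Xi)$ forces each $P_{\xi_k}\in\mathcal P_p(\Xi_k)$ (finiteness of the $p$th moment of the product measure implies finiteness of each factor's $p$th moment), so the balls and the Wasserstein distances appearing above are well defined.

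Combining the two steps gives
\[
\mathbb P\big(P_\xi\in\mathcal H_p(\bm P_\xi^N,\bm\varepsilon)\big)\ \ge\ \mathbb P\Big(\bigcap_{k=1}^n A_k\Big)\ =\ \prod_{k=1}^n\mathbb P(A_k)\ \ge\ \prod_{k=1}^n(1-\beta_k),
\]
where the equality uses the independence of the events $A_k$ established in the first step, which is exactly the point where Assumption~\ref{assumption:independence}(ii) is used. The main obstacle, and the only part that needs genuine care rather than bookkeeping, is making the independence of the $A_k$ rigorous: one must set up the sample space as a countable product, argue that the coordinate maps $\omega\mapsto\xi_k^i(\omega)$ are independent across both $i$ and $k$ (so that the $\sigma$-algebras $\sigma(\xi_k^1,\ldots,\xi_k^N)$, $k\in[n]$, are independent), and then invoke that each $A_k$ is $\sigma(\xi_k^1,\ldots,\xi_k^N)$-measurable because $P_{\xi_k}^N$ depends only on the $k$-th column and $P\mapsto W_p(P_{\xi_k}^N,P)$ is measurable. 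Everything else is a direct unwinding of the definitions of Wasserstein ball, hyperrectangle, and product measure.
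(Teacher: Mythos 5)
Your proposal is correct and follows essentially the same route as the paper: both reduce the claim to the mutual independence of the events $A_k=\{W_p(P_{\xi_k}^N,P_{\xi_k})\le\varepsilon_k\}$, which the paper isolates in Lemma~\ref{lemma independence} by showing that each $A_k$ is $\sigma(\xi_k^1,\ldots,\xi_k^N)$-measurable (via continuity of the map $(\xi_k^1,\ldots,\xi_k^N)\mapsto W_p(P_{\xi_k}^N,P_{\xi_k})$) and that these $\sigma$-algebras are independent. The only cosmetic difference is that you argue via the inclusion $\bigcap_k A_k\subseteq\{P_\xi\in\mathcal H_p(\bm P_\xi^N,\bm\varepsilon)\}$, whereas the paper records this as an equality (valid because the product factorization of $P_\xi$ into its marginals is unique), but both yield the same lower bound $\prod_{k=1}^n(1-\beta_k)$.
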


To prove Theorem \ref{thm:hyperrectangle:confidence} we use the following lemma, whose proof is given in Appendix~\ref{appendix:to:sec:hyperrectangles}.

\begin{lemma}
\longthmtitle{Independent Wasserstein distances across empirical distributions}
Assume that the random variable $\xi$ satisfies Assumption \ref{assumption:independence} and that $P_\xi\in\mathcal P_p(\Xi)$. Given i.i.d. samples $\xi^1,\ldots,\xi^N$ of $\xi$, let $P_{\xi_1}^N,\ldots,P_{\xi_n}^N$ be the empirical distributions of its  components. Then for any $\varepsilon_1,\ldots,\varepsilon_n\ge 0$ the events $\{W_p(P_{\xi_k}^N,P_{\xi_k})\le\varepsilon_k\}$, $k\in[n]$ are independent. \label{lemma independence}
\end{lemma}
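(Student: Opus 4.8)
The claim is that the events $A_k := \{W_p(P_{\xi_k}^N,P_{\xi_k})\le\varepsilon_k\}$, $k\in[n]$, are independent. The plan is to trace these events back to the underlying i.i.d. sample $\xi^1,\ldots,\xi^N$ and exploit the product structure coming from Assumption~\ref{assumption:independence}(ii). First I would set up the probability space: the samples $\xi^i$ are i.i.d. with law $P_\xi = P_{\xi_1}\otimes\cdots\otimes P_{\xi_n}$, so the whole data vector $(\xi^1,\ldots,\xi^N)$ has law $P_\xi^{\otimes N}$ on $\Xi^N$. Grouping coordinates by \emph{component index} rather than by \emph{sample index}, this product measure factorizes as $\bigotimes_{k=1}^n P_{\xi_k}^{\otimes N}$ on $\prod_{k=1}^n \Xi_k^N$; that is, the blocks of data $D_k := (\xi_k^1,\ldots,\xi_k^N)$, $k\in[n]$, are mutually independent, each $D_k$ being an i.i.d. sample of size $N$ from $P_{\xi_k}$. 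This is the one structural fact that does the work, and I would state it as the first step, justifying it by Fubini/Tonelli on the product measure together with the observation that independence of the $n$ components of a single draw $\xi^i$ is inherited coordinatewise across the $N$ draws.

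The second step is to observe that the event $A_k$ is $\sigma(D_k)$-measurable. Indeed, $P_{\xi_k}^N = \frac1N\sum_{i=1}^N \delta_{\xi_k^i}$ is a (measurable) function of $D_k$ alone, and $P_{\xi_k}$ is a fixed (nonrandom) element of $\mathcal P_p(\Xi_k)$; since $Q\mapsto W_p(Q,P_{\xi_k})$ is measurable on $\mathcal P_p(\Xi_k)$ (it is in fact continuous for the $p$-Wasserstein topology, and one only needs measurability for it to be Borel as a function of the atoms $\xi_k^1,\ldots,\xi_k^N$), the map $D_k \mapsto W_p(P_{\xi_k}^N,P_{\xi_k})$ is measurable, hence $A_k\in\sigma(D_k)$. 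Here one should be slightly careful about which $\sigma$-algebra and which notion of measurability is used for the Wasserstein functional, but since $P_{\xi_k}^N$ takes values in the set of $N$-atomic measures, $W_p(P_{\xi_k}^N,P_{\xi_k})$ is just a fixed measurable function of the finite tuple $(\xi_k^1,\ldots,\xi_k^N)\in\Xi_k^N$, so no subtlety beyond joint measurability of $(x^1,\ldots,x^N)\mapsto W_p\big(\frac1N\sum\delta_{x^i},P_{\xi_k}\big)$ is needed, and that follows from continuity.

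The third step is the routine conclusion: independence of the blocks $D_1,\ldots,D_n$ (Step~1) together with $A_k\in\sigma(D_k)$ (Step~2) immediately gives that $A_1,\ldots,A_n$ are independent events, i.e. $\mathbb P\big(\bigcap_{k\in S} A_k\big) = \prod_{k\in S}\mathbb P(A_k)$ for every $S\subset[n]$, since the $\sigma$-algebras $\sigma(D_k)$ are independent. I expect the main (and really the only) obstacle to be pinning down Step~1 cleanly: one must be explicit that the reindexing map $\Xi^N = (\Xi_1\times\cdots\times\Xi_n)^N \to \Xi_1^N\times\cdots\times\Xi_n^N$ is a measurable bijection identifying $P_\xi^{\otimes N}$ with $\bigotimes_{k=1}^n P_{\xi_k}^{\otimes N}$, which is where Assumption~\ref{assumption:independence}(ii) enters. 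Everything after that is bookkeeping. An alternative but essentially equivalent route is to prove the product formula for $\mathbb P\big(\bigcap_k A_k\big)$ directly by writing the probability as an integral of $\prod_k \mathds 1_{A_k}$ against $\bigotimes_k P_{\xi_k}^{\otimes N}$ and applying Tonelli to separate the integral into a product over $k$; I would likely present the $\sigma$-algebra version as it is shorter and makes the full mutual (not just pairwise) independence transparent.
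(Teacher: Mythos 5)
Your proof is correct and follows essentially the same route as the paper: group the data by component index into blocks $D_k=(\xi_k^1,\ldots,\xi_k^N)$, observe these blocks are mutually independent (the paper invokes a grouping lemma for independent families, \cite[Theorem 2.26]{AK:13}, where you argue it directly from the product-measure factorization), show each event $A_k$ lies in $\sigma(D_k)$ via continuity of the Wasserstein distance in the atoms, and conclude. The only cosmetic difference is that the paper makes the measurability in your Step~2 explicit through the bound $W_p(\mu_X^N,\mu_Y^N)\le(\tfrac1N\sum_i\rho(X^i,Y^i)^p)^{1/p}$, whereas you appeal to continuity of the Wasserstein functional in general.
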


\begin{proof}[Proof of Theorem \ref{thm:hyperrectangle:confidence}]
By Assumption~\ref{assumption:independence}, $P_\xi$ is expressed as the product distribution in \eqref{distribution class}. Thus, we get from the definition of the Wasserstein hyperrectangle in  \eqref{hyperrectangle:dfn:and:center} that  
\begin{align}
      \mathbb{P}\big(P_\xi\in \mathcal{H}_p(\bm P_\xi^N,\bm \varepsilon)\big) 
      =\mathbb{P}\big(P_{\xi_k}\in \mathcal B_p(P_{\xi_k}^N,\varepsilon_k)\;\textup{for all}\; k\in[n]\big).
    \label{joint prob proof thm V.1}
\end{align}
Also, by the definition of a Wasserstein ball,  
\begin{align}
    P_{\xi_k}\in \mathcal B_p(P_{\xi_k}^N,\varepsilon_k)\Longleftrightarrow W_p(P_{\xi_k}^N,P_{\xi_k})\leq\varepsilon_k. \label{proof thm V.1 eq12}
\end{align}
Since the components of $\xi$ are independent, we get from the independence result of  Lemma \ref{lemma independence}, \eqref{joint prob proof thm V.1}, and \eqref{proof thm V.1 eq12} that 
\begin{align}
    \mathbb{P}\big(P_\xi\in \mathcal{H}_p(\bm P_\xi^N,\bm\varepsilon)\big)  
    =\prod_{k=1}^n\mathbb{P}\big(P_{\xi_k}\in \mathcal B_p(P_{{\xi}_k}^N,\varepsilon_k)\big). \label{prod prob proof thm V.1}
\end{align}
Recalling that the $k$th Wasserstein ball contains $P_{\xi_k}$ with confidence $1-\beta_k$ for each  $k\in[n]$, we get from \eqref{prod prob proof thm V.1} that the hyperrectangle contains $P_\xi$ with confidence $\prod_{k=1}^n1-\beta_k$, which concludes the proof. 
\end{proof}

\begin{rem}
\longthmtitle{Boldface notation}
We use boldface notation throughout the paper to signify elements, which in contrast to the typical DRO literature, admit a vectorized or product representation. These include the vector Wasserstein radii $\bm\varepsilon$, the product empirical distribution $\bm P_\xi^N$ ---to distinguish it from the standard empirical distribution $P_\xi^N$---, and vectors of dual variables $\bm\lambda$ that are introduced later in dual DRO reformulations. 
\end{rem}

Note that we can directly generalize the notion of a Wasserstein hyperrectangle to the case where the nominal distribution is a general product distribution $Q=Q_1\otimes\cdots\otimes Q_n$ on the Polish space  $\Xi=\Xi_1\times\cdots\times\Xi_n$ with $Q_k\in\P(\Xi_k)$ for each $k\in[n]$, instead of the product empirical distribution $\bm P_\xi^N$. Again, the Wasserstein hyperrectangle $\mathcal H_p(Q,\bm \eps)$ comprises of all product distributions whose $k$th lower-dimensional marginal has Wasserstein distance at most $\eps_k$ from the corresponding marginal of $Q$. Since Wasserstein hyperrectangles contain only product distributions, they are non-convex. This restricts the class of cost functions for which the DRO problem \eqref{DR problem} with $\P^N\equiv\mathcal H_p(Q,\bm\varepsilon)$ admits tractable reformulations. To overcome this obstacle, we build a convex ambiguity set, which shrinks at the same favorable rate as the Wasserstein hyperrectangle with respect to the number of samples. The distributions of this ambiguity set are defined through couplings with a nominal distribution, which need to respect a set of transport cost constraints. 

In particular, consider a general reference distribution $Q\in\mathcal P(\Xi)$ and let  
\begin{align*}
\Pi(Q):=\{\pi\in\P(\Xi\times\Xi): {\rm pr_{1\#}}\pi=Q\}.
\end{align*}
Consider also the lower semicontinuous cost functions $c_k:\Xi\times\Xi\to\mathbb R_{\ge 0}$, $k\in[n]$ with $c_k(\zeta,\zeta)=0$ for all $\zeta\in\Xi$, the transport budget vector $\bm\epsilon=(\epsilon_1,\ldots,\epsilon_n)$ with positive entries, and let
\begin{align}
 \Pi(Q,\bm\epsilon)\equiv\Pi(Q,\bm\epsilon;c_1,\ldots,c_n):=\bigg\{\pi\in\Pi(Q):\int_{\Xi\times\Xi} c_k(\zeta,\xi)d\pi(\zeta,\xi)\le\epsilon_k\;\textup{for all}\;k\in[n]\bigg\}.
 \label{transport:plan:set}
\end{align}
Due to the fact that each $c_k(\zeta,\zeta)\equiv0$, $\Pi(Q,\bm\epsilon)$ is always nonempty. We define the \textit{multi-transport hyperrectangle}
\begin{align}
\mathcal T(Q,\bm\epsilon):={\rm pr}_{2\#}\Pi(Q,\bm\epsilon), \label{transport hyperrectangle}
\end{align}
which is convex and depends on the chosen cost functions $c_1,\ldots,c_n$. When Assumption \ref{assumption:independence}(i) is satisfied and  the costs are $c_k(\zeta,\xi):=\rho_k(\zeta_k,\xi_k)^p$ for some $p\ge 1$, we denote 
\begin{align}
\mathcal T_p(Q,\bm\eps):=\mathcal T(Q,\bm\eps^p), \label{transport:hyperrectangle:Tp}
\end{align}
where $\bm\eps^p:=(\eps_1^p,\ldots,\eps_n^p)$.
The next result delineates the relation between Wasserstein hyperrectangles and multi-transport hyperrectangles of the form \eqref{transport:hyperrectangle:Tp} that are built around product distributions. 

\begin{prop}
\label{prop:containment}
\longthmtitle{Wasserstein hyperrectangle containment}  
Consider a Polish space $\Xi$ as in Assumption~\ref{assumption:independence}(i) and a product distribution $Q=Q_1\otimes\cdots\otimes Q_n\in\P_p(\Xi)$ with $Q_k\in\mathcal P_p(\Xi_k)$ for each $k\in[n]$. Then $\mathcal H_p(Q,\bm\varepsilon)\subset\mathcal T_p(Q,\bm\eps)$. In addition, for any product distribution $P\in\mathcal T_p(Q,\bm\eps)$,  also $P\in\mathcal H_p(Q,\bm\eps)$.
\end{prop}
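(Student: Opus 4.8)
The statement has two parts: (i) $\mathcal H_p(Q,\bm\varepsilon)\subset\mathcal T_p(Q,\bm\varepsilon)$, and (ii) every \emph{product} distribution in $\mathcal T_p(Q,\bm\varepsilon)$ already lies in $\mathcal H_p(Q,\bm\varepsilon)$. The plan is to handle (i) by constructing, for a generic element $P=P_{\xi_1}'\otimes\cdots\otimes P_{\xi_n}'$ of the Wasserstein hyperrectangle, an explicit coupling $\pi\in\Pi(Q,\bm\varepsilon^p;c_1,\ldots,c_n)$ with $c_k(\zeta,\xi)=\rho_k(\zeta_k,\xi_k)^p$ that pushes forward to $P$ under ${\rm pr}_2$. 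For each $k\in[n]$, pick an optimal (or $\delta$-optimal, to sidestep existence issues, though optimal plans do exist here since the spaces are Polish and the costs are continuous) transport plan $\pi_k\in\mathcal C(Q_k,P_{\xi_k}')$ achieving $W_p^p(Q_k,P_{\xi_k}')\le\varepsilon_k^p$. Then form the product coupling $\pi:=\pi_1\otimes\cdots\otimes\pi_n$, reinterpreted as a measure on $\Xi\times\Xi=(\Xi_1\times\cdots\times\Xi_n)\times(\Xi_1\times\cdots\times\Xi_n)$ by permuting coordinates. One checks ${\rm pr}_{1\#}\pi=Q_1\otimes\cdots\otimes Q_n=Q$ and ${\rm pr}_{2\#}\pi=P_{\xi_1}'\otimes\cdots\otimes P_{\xi_n}'=P$, and that $\int c_k\,d\pi=\int\rho_k(\zeta_k,\xi_k)^p\,d\pi_k(\zeta_k,\xi_k)=W_p^p(Q_k,P_{\xi_k}')\le\varepsilon_k^p$ because integrating out the other $n-1$ factors leaves only the $k$th marginal of $\pi$ on $\Xi_k\times\Xi_k$, which is $\pi_k$. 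Hence $\pi\in\Pi(Q,\bm\varepsilon^p)$ and $P={\rm pr}_{2\#}\pi\in\mathcal T_p(Q,\bm\varepsilon)$.

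For part (ii), suppose $P=P_1\otimes\cdots\otimes P_n$ is a product distribution with $P\in\mathcal T_p(Q,\bm\varepsilon)$, so there is $\pi\in\Pi(Q)$ with ${\rm pr}_{2\#}\pi=P$ and $\int\rho_k(\zeta_k,\xi_k)^p\,d\pi\le\varepsilon_k^p$ for each $k$. The key observation is that the marginal $\pi^{(k)}:={\rm pr}_{k,(n+k)\#}\pi$ (the joint law of the $k$th source coordinate and the $k$th target coordinate under $\pi$) is itself a transport plan between $Q_k={\rm pr}_{k\#}Q$ and $P_k={\rm pr}_{k\#}P$, and $\int\rho_k^p\,d\pi^{(k)}=\int\rho_k(\zeta_k,\xi_k)^p\,d\pi\le\varepsilon_k^p$. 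Therefore $W_p(Q_k,P_k)\le\varepsilon_k$, i.e. $P_k\in\mathcal B_p(Q_k,\varepsilon_k)$ for every $k$, which is exactly the condition for $P=P_1\otimes\cdots\otimes P_n$ to belong to $\mathcal H_p(Q,\bm\varepsilon)$.

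The one technical point worth a sentence or two is making the ``reinterpret the product coupling by permuting coordinates'' step precise: one fixes the natural isomorphism between $\prod_{k=1}^n(\Xi_k\times\Xi_k)$ and $(\prod_{k=1}^n\Xi_k)\times(\prod_{k=1}^n\Xi_k)$ given by grouping all source coordinates together and all target coordinates together, and pushes $\pi_1\otimes\cdots\otimes\pi_n$ forward under this (Borel, bimeasurable) map. Measurability and the marginal/cost identities then follow from Fubini--Tonelli applied to the nonnegative integrands $\rho_k^p$, together with the fact that pushforward commutes with the relevant projections. I expect this bookkeeping, rather than any genuine difficulty, to be the main thing to get right; existence of optimal plans in part (i) is standard (and can be avoided by working with $\varepsilon_k$-suboptimal plans and letting the slack vanish, but since $\mathcal B_p$ is defined by a non-strict inequality, one should confirm the coupling attains the bound, which optimal plans do).
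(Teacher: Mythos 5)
Your proof is correct and follows essentially the same route as the paper: for the inclusion you build the product coupling $\pi_1\otimes\cdots\otimes\pi_n$ from optimal plans, push it forward under the coordinate-permutation map, and check marginals and costs via Fubini; for the converse you project a feasible coupling onto its $(k,n{+}k)$-marginals to get valid transport plans between $Q_k$ and $P_k$. The paper invokes the same coordinate-permutation map $T$, the same optimal plans (citing Villani), and the same Fubini argument, so there is no substantive difference.
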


\begin{proof}
Let $P\in \mathcal{H}_p(Q,\boldsymbol\varepsilon)$. Then $P=P_1\otimes\cdots\otimes P_n$ and $P_k\in\mathcal B_p(Q_k,\varepsilon_k)$ for all $k\in[n]$, which implies that $W_p(Q_k,P_k)\le\varepsilon_k$.
Thus, for each $k\in[n]$, there exists an optimal transport plan $\pi_k$ for the Wasserstein distance between $Q_k$ and $P_k$ (cf. \cite[Theorem 4.1]{CV:08}) with   
\begin{align} \label{individual:OT:constraints}
\int_{\Xi_k\times\Xi_k} \rho_k(\zeta_k,\xi_k)^p d\pi_k(\zeta_k,\xi_k)\le\varepsilon_k^p.
\end{align}
Next, define 
\begin{align}
\label{special:transport:plan}
\pi:=\bigotimes_{k=1}^n\pi_k\quad{\rm and}\quad\widetilde{\pi}:=T_{\#}\pi,
\end{align}
where $T:\prod_{k=1}^n \Xi_k\times\Xi_k \to \prod_{k=1}^n \Xi_k \times\prod_{k=1}^n \Xi_k $ is the linear map $T(\zeta_1,\xi_1,\ldots,\zeta_n,\xi_n):=(\zeta_1,\ldots,\zeta_n,\xi_1,\ldots,\xi_n)$. Then $\widetilde{\pi}$ is a transport plan between $Q$ and $P$ since
\begin{align*}
\widetilde{\pi}(A_1\times\cdots\times A_n\times\Xi)
&=\widetilde{\pi}(A_1\times\cdots\times A_n\times\Xi_1\times\cdots\times\Xi_n)
\overset{(a)}{=}\pi(A_1\times\Xi_1\times\cdots\times A_n\times\Xi_n)\\
&\overset{(b)}{=}\prod_{k=1}^n\pi_k(A_k\times\Xi_k)\overset{(c)}{=}\prod_{k=1}^n Q_k(A_k)
=Q_1\otimes\cdots\otimes Q_n(A_1\times\cdots\times A_n)\\
&=Q(A_1\times\cdots\times A_n)
\end{align*}
for any $A_k\in\mathcal B(\Xi_k)$, $k\in[n]$. Here we used \eqref{special:transport:plan} in (a) and (b), and the fact that each $\pi_k$ is a transport plan in (c). Analogously, $P$ is also a marginal of $\pi$. In addition, we get from \eqref{individual:OT:constraints} and Fubini's theorem (cf.~\cite[Page 233]{PB:08}) that 
\begin{align*}
\int_{\Xi_k\times\Xi_k} \rho_k(\zeta_k,\xi_k)^p d\widetilde\pi(\zeta,\xi)\le \eps_k^p
\end{align*}
for each $k\in[n]$, which by \eqref{transport:plan:set}-\eqref{transport:hyperrectangle:Tp} implies that also $P\in\mathcal T_p(Q,\boldsymbol\varepsilon)$ and concludes the proof of the first claim. 

For the proof of the second claim consider a product distribution $P\in\mathcal T_p(Q,\bm\eps)$. Then there exists a transport plan $\pi$ between $Q$ and $P$ so that \eqref{transport:plan:set} holds with $c_k(\zeta,\xi)\equiv\rho_k(\zeta_k,\xi_k)^p$ and $\epsilon_k\equiv\varepsilon_k^p$. Next, let $\pi_k:={\rm pr}_{k,n+k\#}\pi$ (with $\pi$ viewed as a distribution on $\prod_{k=1}^n \Xi_k \times\prod_{k=1}^n \Xi_k$). It follows that $\pi_k$ has marginals $Q_k$ and $P_k$, respectively, and that it satisfies \eqref{individual:OT:constraints}. As a result, $P_k\in\mathcal B_p(Q_k,\varepsilon_k)$ for all $k\in[n]$ and we conclude that also $P\in\mathcal H_p(Q,\bm\eps)$.
\end{proof}

The next result follows directly from Proposition \ref{prop:containment} and provides conditions under which a multi-transport hyperrectangle contains the true distribution with prescribed confidence.

\begin{corollary}(\textbf{Probabilistic guarantees for multi-transport hyperrectangles})
Assume that the random variable $\xi$ satisfies Assumption~\ref{assumption:independence} and that $P_\xi\in\mathcal P_p(\Xi)$. Given i.i.d. samples $\xi^1,\ldots,\xi^N$ of $\xi$, let $P_{\xi_1}^N,\ldots,P_{\xi_n}^N$ be the empirical distributions of its components. Assume also that each Wasserstein ball $\mathcal B_p(P_{\xi_k}^N,\varepsilon_k)$ contains $P_{\xi_k}$ with confidence  $1-\beta_k$. Then the multi-transport hyperrectangle $\mathcal{T}_p(\bm P_\xi^N,\bm\eps)$ contains $P_\xi$ with confidence $\prod_{k=1}^n1-\beta_k$. \label{cor:confidence}
\end{corollary}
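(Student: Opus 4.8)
The plan is to derive Corollary~\ref{cor:confidence} as an immediate consequence of Theorem~\ref{thm:hyperrectangle:confidence} together with the containment established in Proposition~\ref{prop:containment}. The key observation is that the hypotheses of the corollary are \emph{identical} to those of Theorem~\ref{thm:hyperrectangle:confidence}: Assumption~\ref{assumption:independence} holds, $P_\xi\in\mathcal P_p(\Xi)$, and each lower-dimensional Wasserstein ball $\mathcal B_p(P_{\xi_k}^N,\varepsilon_k)$ contains $P_{\xi_k}$ with confidence $1-\beta_k$. Hence Theorem~\ref{thm:hyperrectangle:confidence} directly yields
\begin{align*}
\mathbb P\big(P_\xi\in\mathcal H_p(\bm P_\xi^N,\bm\varepsilon)\big)\ge\prod_{k=1}^n(1-\beta_k).
\end{align*}

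Next I would invoke Proposition~\ref{prop:containment} with the product distribution $Q=\bm P_\xi^N=P_{\xi_1}^N\otimes\cdots\otimes P_{\xi_n}^N$. This requires checking that $\bm P_\xi^N\in\mathcal P_p(\Xi)$ with each marginal $P_{\xi_k}^N\in\mathcal P_p(\Xi_k)$, which is immediate since each $P_{\xi_k}^N$ is finitely supported and therefore has finite $p$th moment. Proposition~\ref{prop:containment} then gives the inclusion $\mathcal H_p(\bm P_\xi^N,\bm\varepsilon)\subset\mathcal T_p(\bm P_\xi^N,\bm\varepsilon)$ as subsets of $\mathcal P(\Xi)$. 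Consequently, the event $\{P_\xi\in\mathcal H_p(\bm P_\xi^N,\bm\varepsilon)\}$ is contained in the event $\{P_\xi\in\mathcal T_p(\bm P_\xi^N,\bm\varepsilon)\}$ (note that these are events on the sample space generating the samples $\xi^1,\ldots,\xi^N$, since both ambiguity sets are random objects depending on the data). Monotonicity of probability then gives
\begin{align*}
\mathbb P\big(P_\xi\in\mathcal T_p(\bm P_\xi^N,\bm\varepsilon)\big)\ge\mathbb P\big(P_\xi\in\mathcal H_p(\bm P_\xi^N,\bm\varepsilon)\big)\ge\prod_{k=1}^n(1-\beta_k),
\end{align*}
which is exactly the claimed bound.

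There is essentially no obstacle here; the only point requiring a sentence of care is the measurability/well-posedness of the random inclusion event, i.e.\ that $\{P_\xi\in\mathcal T_p(\bm P_\xi^N,\bm\varepsilon)\}$ is a genuine event, but this is inherited from the corresponding fact for Wasserstein hyperrectangles (implicit in Theorem~\ref{thm:hyperrectangle:confidence}) via the set inclusion, so no new argument is needed. In the write-up I would keep the proof to two or three lines: state that the hypotheses match those of Theorem~\ref{thm:hyperrectangle:confidence}, apply it to obtain the confidence bound for $\mathcal H_p(\bm P_\xi^N,\bm\varepsilon)$, invoke the containment from Proposition~\ref{prop:containment} with $Q=\bm P_\xi^N$, and conclude by monotonicity of $\mathbb P$.
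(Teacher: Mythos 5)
Your proof is correct and follows exactly the route the paper intends: the paper states that Corollary~\ref{cor:confidence} "follows directly from Proposition~\ref{prop:containment}," and you spell this out by combining Theorem~\ref{thm:hyperrectangle:confidence} (confidence for the Wasserstein hyperrectangle), the inclusion $\mathcal H_p(\bm P_\xi^N,\bm\varepsilon)\subset\mathcal T_p(\bm P_\xi^N,\bm\varepsilon)$ from Proposition~\ref{prop:containment} applied to the finitely supported product distribution $Q=\bm P_\xi^N$, and monotonicity of probability. No gaps; your extra sentence verifying $\bm P_\xi^N\in\mathcal P_p(\Xi)$ is a reasonable bit of diligence that the paper leaves implicit.
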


We conclude this section with a result that compares the size of multi-transport hyperrectangles with that of monolithic balls. Specifically, we determine the radius that a Wasserstein ball should have in order to contain a multi-transport hyperrectangle when its reference distribution is also the center of that ball. For this, we also need to relate the metric $\rho$ on the product space $\Xi$ with the metrics $\rho_k$ on the components $\Xi_k$.  
\begin{prop} \longthmtitle{Size of enclosing Wasserstein ball}
\label{prop:hyperrectangles:geomtery}
Let $Q\in \mathcal P_p(\Xi)$ and assume that the metric on $\Xi$ is  
\begin{align} \label{product:metric}
\rho(\zeta,\xi):=\Big(\sum_{k=1}^n\rho_k(\zeta_k,\xi_k)^q\Big)^\frac{1}{q}, \quad\zeta,\xi\in\Xi,    
\end{align}
for some $q\ge 1$. Then the multi-transport hyperrectangle  $\mathcal T_p(Q,\bm\eps)$ satisfies 
\begin{align*}
\mathcal T_p(Q,\bm\eps)\subset \mathcal B_p(Q,\eps), 
\end{align*}
where $\eps=n^{\max\{0,1/q-1/p\}} \big(\sum_{k=1}^n \eps_k^p \big)^\frac{1}{p}$. If in addition $p=q$ and $Q=Q_1\otimes\cdots\otimes Q_n$ is a product distribution, then there exists a product distribution $P\in\mathcal T_p(Q,\bm\eps)$ with $W_p(Q,P)=\eps$.  
\end{prop}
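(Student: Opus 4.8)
The plan is to push a transport plan realizing the multi-transport constraints through the Wasserstein cost with metric $\rho$ from \eqref{product:metric}. Take $P\in\mathcal T_p(Q,\bm\eps)$ and a plan $\pi$ certifying membership, so $\int \rho_k(\zeta_k,\xi_k)^p\,d\pi\le\eps_k^p$ for each $k\in[n]$. The same $\pi$ is a candidate coupling between $Q$ and $P$ for $W_p(Q,P)$, so it suffices to bound $\int \rho(\zeta,\xi)^p\,d\pi = \int\big(\sum_k\rho_k(\zeta_k,\xi_k)^q\big)^{p/q}\,d\pi$. The first step is a pointwise norm-comparison: for a nonnegative vector $(a_1,\dots,a_n)$ with $a_k=\rho_k(\zeta_k,\xi_k)$, relate $\|\,\cdot\,\|_q$ and $\|\,\cdot\,\|_p$ via the standard inequality $\|a\|_q\le n^{\max\{0,1/q-1/p\}}\|a\|_p$ (this is the only place the dimension factor $n^{\max\{0,1/q-1/p\}}$ enters; when $q\le p$ the factor is $n^{1/q-1/p}$, and when $q\ge p$ it is $1$). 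Raising to the $p$th power gives $\rho(\zeta,\xi)^p\le n^{p\max\{0,1/q-1/p\}}\sum_{k=1}^n\rho_k(\zeta_k,\xi_k)^p$ pointwise.

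Integrating this against $\pi$ and using linearity together with the $n$ transport-budget constraints yields $\int\rho(\zeta,\xi)^p\,d\pi\le n^{p\max\{0,1/q-1/p\}}\sum_{k=1}^n\eps_k^p$, hence $W_p(Q,P)\le n^{\max\{0,1/q-1/p\}}\big(\sum_k\eps_k^p\big)^{1/p}=\eps$, which is exactly $P\in\mathcal B_p(Q,\eps)$. Since $P$ was arbitrary, $\mathcal T_p(Q,\bm\eps)\subset\mathcal B_p(Q,\eps)$.

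For the sharpness claim, assume $p=q$ and $Q=Q_1\otimes\cdots\otimes Q_n$. Now the dimension factor is $1$ and $\eps^p=\sum_k\eps_k^p$. The idea is to choose, for each $k$, a distribution $P_k\in\mathcal P_p(\Xi_k)$ with $W_p(Q_k,P_k)$ exactly equal to $\eps_k$ — for instance a translate of $Q_k$ by a vector of length $\eps_k$ (or, in a general Polish $\Xi_k$, any admissible perturbation achieving the prescribed distance; one should note explicitly the mild hypothesis under which such a $P_k$ exists, e.g. $\Xi_k$ unbounded or $\eps_k$ small enough), and let $\pi_k$ be an optimal plan. Set $P:=P_1\otimes\cdots\otimes P_n$. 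By Proposition \ref{prop:containment} (or directly by taking $\pi:=\bigotimes_k\pi_k$ and transporting through the reindexing map $T$ as in its proof), $P\in\mathcal T_p(Q,\bm\eps)$. It remains to show $W_p(Q,P)=\eps$: the upper bound is the inclusion just proved, and for the lower bound one uses that for product distributions with $p=q$ the Wasserstein distance decomposes, $W_p(Q,P)^p=\sum_k W_p(Q_k,P_k)^p=\sum_k\eps_k^p=\eps^p$; this decomposition follows because any coupling of $Q$ and $P$ projects to couplings of each $(Q_k,P_k)$, giving ``$\ge$'', while the product of optimal plans gives ``$\le$''.

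The routine part is the norm inequality and the integration; the one genuine point requiring care is the lower bound $W_p(Q,P)\ge\eps$ in the sharpness statement, i.e. the product decomposition of the Wasserstein distance when the cost is the $p$-sum of the component costs — this needs the observation that marginalizing a coupling of the products yields couplings of the components and that the total cost is additive over $k$, so no single global coupling can beat the sum of the componentwise optima. The other subtlety worth flagging is the existence of a component distribution at exactly distance $\eps_k$, which is why the statement only claims ``there exists'' and implicitly presumes the geometry of $\Xi_k$ permits it.
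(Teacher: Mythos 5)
Your proof of the first inclusion $\mathcal T_p(Q,\bm\eps)\subset \mathcal B_p(Q,\eps)$ is essentially identical to the paper's: the paper packages the pointwise inequality $\bigl(\sum_k a_k\bigr)^{p/q}\le n^{\max\{0,p/q-1\}}\sum_k a_k^{p/q\cdot q/p\cdot p/q}$ and the subsequent integration against the certifying plan as Proposition~\ref{prop:Wasserstein:product}, which is exactly your $\ell^q$-versus-$\ell^p$ norm comparison followed by linearity of the integral and the budget constraints. For the sharpness claim the two arguments diverge in how the decomposition $W_p^p(Q,P)=\sum_k W_p^p(Q_k,P_k)$ for product $Q,P$ with $p=q$ is established. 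The paper (Proposition~\ref{prop:Wasserstein:product2}) proves the lower bound $\sum_k W_p^p(Q_k,P_k)\le W_p^p(Q,P)$ via Kantorovich duality: it sums the componentwise dual problems, observes that potentials of the form $\sum_k\psi_k\circ{\rm pr}_k$ are admissible for the global dual, and then gets the upper bound from the product transport plan, identically to yours. You instead obtain the lower bound purely on the primal side: any global coupling $\pi$ of $Q$ and $P$ marginalizes to a coupling $\pi_k={\rm pr}_{k,n+k\#}\pi$ of $Q_k$ and $P_k$, and since the cost $\rho^p=\sum_k\rho_k^p$ is additive, $\int\rho^p\,d\pi=\sum_k\int\rho_k^p\,d\pi_k\ge\sum_k W_p^p(Q_k,P_k)$; taking the infimum over $\pi$ gives the result. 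This is a genuinely different and somewhat more elementary argument (no duality needed), and it has the minor advantage of making transparent that the $\ge$ direction holds for arbitrary, not necessarily product, $Q$ and $P$. Finally, you correctly flag that the existence of component distributions $P_k$ with $W_p(Q_k,P_k)=\eps_k$ is an implicit geometric hypothesis on the $\Xi_k$; the paper simply asserts such $P_k$ exist without comment, so your caveat identifies a small gap in the paper's own exposition rather than a deficiency in your proof.
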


The proof is given in Appendix~\ref{appendix:to:sec:hyperrectangles}. When $Q$ is a product distribution and \eqref{product:metric} holds, we get from the first parts of Propositions~\ref{prop:containment} and~\ref{prop:hyperrectangles:geomtery} the inclusions
\begin{align*}
\mathcal H_p(Q,\bm\eps)\subset\mathcal T_p(Q,\bm\eps)\subset \mathcal B_p(Q,\eps), 
\end{align*}
where the radius $\eps$ of the ball is given in Proposition~\ref{prop:hyperrectangles:geomtery}. From the second part of the same propositions, it follows that when the exponents of the product metric and the Wasserstein distance coincide, there is at least one common point in both ambiguity hyperrectangles that lies on the boundary of their enclosing ball (cf. Figure~\ref{fig:enclosing:ball}).

\begin{figure}[h]
\centering
\includegraphics[width=.55\linewidth]{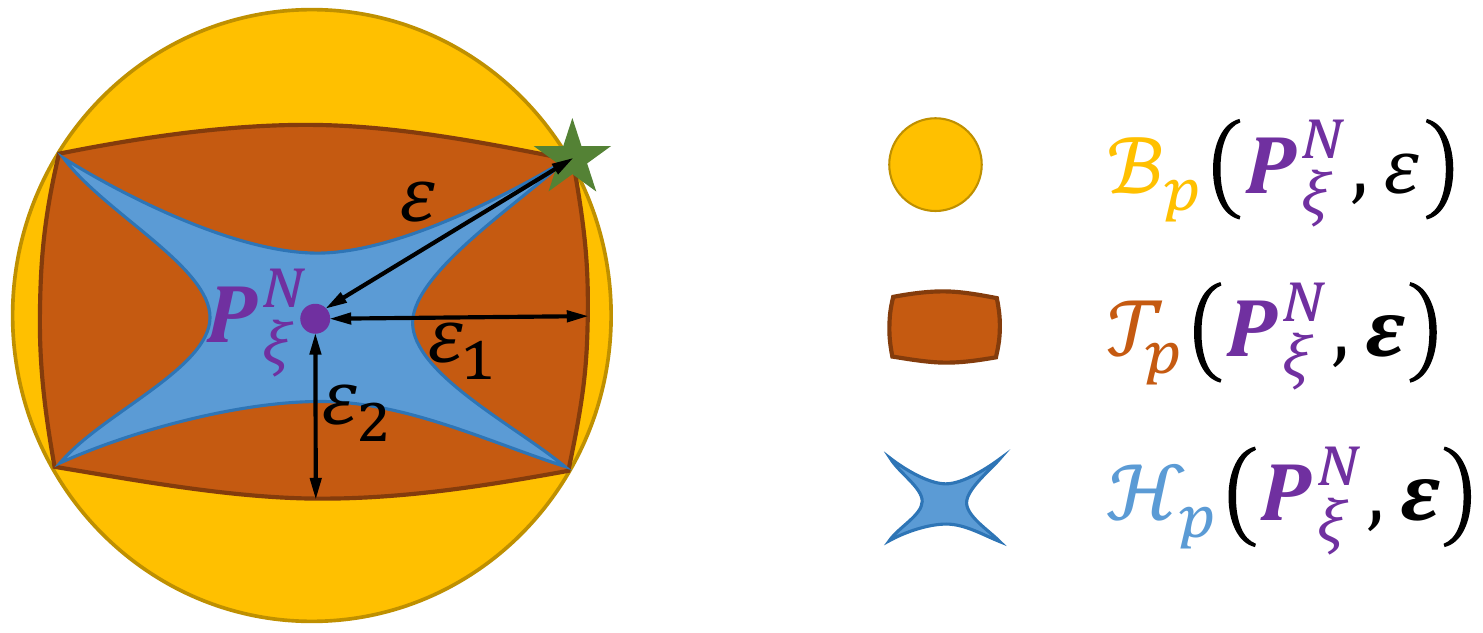} \\ 
\caption{The figure shows the Wasserstein hyperrectangle $\mathcal H_p(\bm P_\xi^N,\bm\varepsilon)$, the multi-transport hyperrectangle $\mathcal T_p(\bm P_\xi^N,\bm\varepsilon)$, and their enclosing ball $\mathcal B_p(\bm P_\xi^N,\varepsilon)$ around the product empirical distribution $\bm P_\xi^N$ for a random variable with two independent components. The star denotes a common distribution of both hyperrectangles that lies on the boundary of the ball.} 
\label{fig:enclosing:ball}
\end{figure}

\section{Ambiguity hyperrectangle size based on the number of samples}
\label{sec:statistical:guarantees}

In this section, we compare Wasserstein and multi-transport hyperrectangles with Wasserstein balls in terms of the size reduction that they exhibit with the number of samples. For this comparison, we assume that both sets are constructed using the same samples and the same confidence level. Since most of the concentration of measure results for this purpose are formulated for distributions supported on Euclidean spaces (cf. \cite{NF-AG:15, DB-JC-SM:23}), we focus on the case where $\Xi$ is a bounded subset of $\Rat{d}$ with the distance induced by the norm $\|\cdot\|_q$. The next result presents bounds for the Wasserstein distance between the true and the empirical distribution, which we exploit to tune the size of $\mathcal H_p(\bm P_\xi^N,\bm \eps)$ and  $\mathcal T_p(\bm P_\xi^N,\bm \eps)$ so that they contain the true distribution with a desired confidence.    

\begin{prop} \label{prop:compact:radius}
\longthmtitle{Ambiguity radius \cite[Proposition 24]{DB-JC-SM:23}}
Assume that the probability distribution $P_\xi$ is supported on $\Xi\subset\mathbb R^d$ with $\rho_\Xi:={\rm diam}(\Xi)<\infty$. 
Assume also that $d\ge 2p+1$ and let $\xi^1,\ldots,\xi^N$  be i.i.d. samples  of $\xi$. Then the ambiguity radius
\begin{align}
\widehat\eps(N,\beta,\rho_\Xi,p,q,d) :=\rho_\Xi\widehat C(\beta,p,q,d)\frac{1}{N^{1/d}}, \label{explicit:ambiguity:radius}
\end{align}
where 
\begin{align*}
\widehat C(\beta,p,q,d) & :=d^{1/q}2^{1/2p}(C(d,p)+(\ln\beta^{-1})^{1/2p}) \\
C(d,p) & :=2^{(d-2)/2p}\Big(\frac{1}{2^{1/2}-1} +\frac{1}{2^{1/2}-2^{1/2-p}}\Big)^{1/p},
\end{align*}
and $1-\beta$ is a desired confidence level, guarantees that
\begin{align*}
\mathbb P(P_\xi\in\mathcal B_p(P_\xi^N,\varepsilon))\ge 1-\beta.
\end{align*}
\end{prop}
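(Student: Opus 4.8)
The plan is to follow the argument behind \cite[Proposition 24]{DB-JC-SM:23}, which proceeds in three stages: (i) a bound on the expected $p$-th power $\bE[W_p^p(P_\xi^N,P_\xi)]$ of order $N^{-p/d}$ with the explicit constant $C(d,p)^p$; (ii) an upgrade of this bound to a high-probability statement via a bounded-differences concentration inequality; and (iii) taking $p$-th roots and absorbing the concentration term into the mean term using $d\ge 2p+1$. As a preliminary reduction, note that since $\rho_\Xi=\mathrm{diam}(\Xi)$ is the $\|\cdot\|_\infty$-diameter, $\Xi$ is contained in an axis-aligned cube of $\|\cdot\|_\infty$-side $\rho_\Xi$, whose $\|\cdot\|_q$-diameter is $D:=d^{1/q}\rho_\Xi$; this $D$ also upper bounds the $\rho$-diameter of $\Xi$ and the $\rho$-distance between any two atoms of $P_\xi^N$. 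By rescaling it then suffices to bound $W_p^p$ on the unit cube and multiply by $D^p$.

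For stage (i) I would use the layered/dyadic transport estimate from the proof of \cite[Proposition 10]{NF-AG:15}: dyadically partition the cube into families $\mathcal F_\ell$ of $2^{d\ell}$ subcubes of $\|\cdot\|_q$-diameter $2^{-\ell}$, and bound $W_p^p(P_\xi^N,P_\xi)$ by the cost of successively merging, level by level, the masses the two measures place on each subcube. Taking expectations and using $\bE\,|P_\xi^N(F)-P_\xi(F)|\le(\mathrm{Var})^{1/2}\le(P_\xi(F)/N)^{1/2}$ together with the Cauchy--Schwarz inequality over the $2^{d\ell}$ subcubes gives $\sum_{F\in\mathcal F_\ell}\bE\,|P_\xi^N(F)-P_\xi(F)|\le 2^{d\ell/2}N^{-1/2}$, while at deep levels one uses the trivial bound $\sum_{F\in\mathcal F_\ell}|P_\xi^N(F)-P_\xi(F)|\le 2$. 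Splitting the dyadic sum at the level $L\approx\frac1d\log_2 N$ leaves two geometric series: one with ratio $2^{p-d/2}$, convergent because $d/2-p\ge 1/2>0$ and contributing the factor $(2^{1/2}-1)^{-1}$, and one with ratio $2^{-p}$, convergent because $p\ge 1$ and contributing the factor $(2^{1/2}-2^{1/2-p})^{-1}$. Optimizing the truncation level $L$ produces the prefactor $2^{(d-2)/2}$, and altogether $\bE[W_p^p(P_\xi^N,P_\xi)]\le D^p\,C(d,p)^p\,N^{-p/d}$.

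For stage (ii) I would apply McDiarmid's inequality to $G:=W_p^p(P_\xi^N,P_\xi)$ viewed as a function of the i.i.d.\ samples $\xi^1,\dots,\xi^N$: replacing one sample changes a single atom of mass $1/N$ of $P_\xi^N$, and re-routing that mass inside an optimal coupling changes $G$ by at most $D^p/N$, so the bounded-difference constants satisfy $\sum_i c_i^2\le D^{2p}/N$ and hence $\bP(G\ge\bE[G]+u)\le\exp(-2u^2N/D^{2p})$. Choosing $u=D^p(\ln\beta^{-1}/(2N))^{1/2}$ makes the right-hand side equal to $\beta$, so combining with stage (i) gives, with probability at least $1-\beta$,
\[
W_p^p(P_\xi^N,P_\xi)\le D^p\Big(C(d,p)^p\,N^{-p/d}+\big(\tfrac{\ln\beta^{-1}}{2N}\big)^{1/2}\Big).
\]
For stage (iii) I would use $N^{-1/2}\le N^{-p/d}$ (valid since $d\ge 2p$) and the subadditivity of $t\mapsto t^{1/p}$ for $p\ge 1$ to conclude
\[
W_p(P_\xi^N,P_\xi)\le d^{1/q}\rho_\Xi\,N^{-1/d}\big(C(d,p)+2^{-1/(2p)}(\ln\beta^{-1})^{1/(2p)}\big)\le\widehat\eps(N,\beta,\rho_\Xi,p,q,d),
\]
since $2^{1/2p}\ge 1\ge 2^{-1/2p}$; the stated confidence bound follows immediately.

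The main obstacle is stage (i): extracting the precise constants requires setting up the layered transport inequality carefully, tracking both geometric series through the dyadic decomposition, and optimizing the truncation level (the source of the $2^{(d-2)/2}$ dimensional blow-up and of the hypothesis $d\ge 2p+1$, which guarantees $d/2-p\ge 1/2$ so that the sub-critical series converges with a dimension-independent ratio bound). Stages (ii) and (iii) are comparatively routine, relying only on boundedness of $\Xi$ and on elementary inequalities.
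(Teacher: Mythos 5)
The paper does not contain its own proof of this proposition; it is imported verbatim from \cite[Proposition 24]{DB-JC-SM:23} and simply cited. So there is no in-paper argument to compare against, and your proposal is a blind reconstruction of the reference's proof.

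That said, your three-stage strategy (Fournier--Guillin--style dyadic expectation bound, McDiarmid concentration for $W_p^p$, then $p$-th roots and absorption using $d\ge 2p+1$) is indeed the standard route to such explicit finite-sample radii, and stages (ii) and (iii) of your sketch are algebraically sound. In particular: the bounded-difference constant $D^p/N$ for $G=W_p^p$ is correct (re-route the mass $1/N$ from the replaced atom through the old optimal coupling, and the per-unit cost change is at most $D^p$), so McDiarmid gives the deviation $D^p(\ln\beta^{-1}/(2N))^{1/2}$; and the subadditivity of $t\mapsto t^{1/p}$ combined with $N^{-1/(2p)}\le N^{-1/d}$ and $2^{-1/(2p)}\le 1\le 2^{1/(2p)}$ correctly delivers $\widehat\eps$.

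The one place I would not sign off on without consulting \cite{DB-JC-SM:23} is stage (i), the claim $\mathbb E[W_p^p(P_\xi^N,P_\xi)]\le D^p\,C(d,p)^p\,N^{-p/d}$ with exactly the stated $C(d,p)$. Your sketch of the dyadic accounting is the right template, and the two convergence factors $(2^{1/2}-1)^{-1}$ and $(2^{1/2}-2^{1/2-p})^{-1}$ are recognizable as the bounds on the subcritical and supercritical geometric series under $d/2-p\ge 1/2$ and $p\ge 1$. However, a direct bookkeeping of the split at $L+1\approx \tfrac1d\log_2 N$ (with the rounding to an integer level and the trivial bound $\sum_F|\mu(F)-\nu(F)|\le 2$ on deep levels) produces the two contributions with \emph{different} powers of $2$ multiplying them (roughly $2^{d/2-p}$ on the increasing series after rounding up $L$, and $2^{3/2}$ on the decreasing one), not the common prefactor $2^{(d-2)/2}$ claimed in $C(d,p)^p$. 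For small $d$, e.g.\ $d=3$, $p=1$, that naive accounting in fact \emph{exceeds} $2^{(d-2)/2}$ on the tail series. This means the reference must organize the layered transport bound or the level choice differently from the version you sketched (for instance, a different per-level cost normalization, a different trivial bound, or putting some of the $2^{1/(2p)}$ slack from stage (iii) to use). So while your plan is the correct skeleton and the difficult part is exactly where you say it is, I would not treat the constant $C(d,p)$ as verified by your argument as written; one should either follow the precise accounting in \cite[Proposition 24]{DB-JC-SM:23} or redo the dyadic derivation more carefully before claiming the stated form.
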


Using this ambiguity radius and the guarantees of Theorem~\ref{thm:hyperrectangle:confidence} we determine a ball around the product empirical distribution that contains both the Wasserstein hyperrectangle and the multi-transport hyperrectangle with prescribed probability. The proof of this result is given in Appendix~\ref{appendix:to:sec:statistical:guarantees}. 

\begin{prop} \label{prop:hyperrectangle:containment}
\longthmtitle{Size reduction of ambiguity hyperrectangles}
Assume that the random variable $\xi$ is supported on the compact set $\Xi\equiv \Xi_1\times\cdots\times\Xi_n\subset\Rat{d_1}\times\cdots\times\Rat{d_n}\equiv\mathbb R^d$ with $d_k\ge 2p+1$ for each $k\in[n]$ and satisfies Assumption~\ref{assumption:independence} with the metric induced by $\|\cdot\|_q$ in each space. For any confidence $1-\beta$,  let 
\begin{align*}
\beta_k:=\beta\frac{d_k}{d},\quad   \eps_k:=\widehat\eps(N,\beta_k,\rho_\Xi,p,q,d_k), 
\end{align*} 
with $\widehat\eps$  as in  \eqref{explicit:ambiguity:radius}, and consider the ambiguity sets $\mathcal H_p(\bm P_\xi^N,\bm\eps)$ and $\mathcal T_p(\bm P_\xi^N,\bm\eps)$. Then both sets contain $P_\xi$ with confidence $1-\beta$ and  
\begin{align}
\mathcal H_p(\bm P_\xi^N,\bm\eps)\subset\mathcal T_p(\bm P_\xi^N,\bm  \eps)\subset\mathcal B_p(\bm P_\xi^N,\varepsilon),
\end{align}
where 
\begin{align}
\label{epsilon:decay:rectangles}
\varepsilon=cn^{1/p+\max\{0,1/q-1/p\}}\rho_\Xi\widehat C(\beta,p,q,d)\frac{1}{N^{1/d_{\max}}},
\end{align}
$c:=(\sqrt{2q+1}+1)/\big(2e^{(\sqrt{2q+1}+1)^2/8}\big)$, $d_{\max}:=\max_{k\in[n]}d_k$, and $\widehat C$ is defined in Proposition~\ref{prop:compact:radius}. 
\end{prop}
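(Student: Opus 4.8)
The statement has two parts: (1) the confidence bound $\mathbb P(P_\xi\in\mathcal H_p(\bm P_\xi^N,\bm\eps))\ge 1-\beta$ (and the same for $\mathcal T_p$), and (2) the chain of inclusions with the explicit radius $\eps$ in \eqref{epsilon:decay:rectangles}. I would treat these separately, since the first is essentially bookkeeping with earlier results and the second is where the quantitative work lies.

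For part (1), I would first invoke Proposition~\ref{prop:compact:radius} componentwise: since $d_k\ge 2p+1$ and $\Xi_k$ has diameter at most $\rho_\Xi$, the choice $\eps_k=\widehat\eps(N,\beta_k,\rho_\Xi,p,q,d_k)$ guarantees that the lower-dimensional ball $\mathcal B_p(P_{\xi_k}^N,\eps_k)$ contains $P_{\xi_k}$ with confidence $1-\beta_k$. (One should note that the ambient diameter $\rho_\Xi$ dominates $\mathrm{diam}(\Xi_k)$ in the $\|\cdot\|_q$ metric, so using $\rho_\Xi$ in place of $\mathrm{diam}(\Xi_k)$ is conservative but valid.) Then Theorem~\ref{thm:hyperrectangle:confidence} gives that $\mathcal H_p(\bm P_\xi^N,\bm\eps)$ contains $P_\xi$ with confidence $\prod_{k=1}^n(1-\beta_k)$, and Corollary~\ref{cor:confidence} gives the same for $\mathcal T_p(\bm P_\xi^N,\bm\eps)$. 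It remains to check $\prod_{k=1}^n(1-\beta_k)\ge 1-\beta$; since $\beta_k=\beta d_k/d$ and $\sum_k d_k=d$, this is the elementary inequality $\prod_k(1-a_k)\ge 1-\sum_k a_k$ for $a_k\in[0,1]$ with $\sum_k a_k=\beta$, which follows by induction on $n$.

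For part (2), the first inclusion $\mathcal H_p(\bm P_\xi^N,\bm\eps)\subset\mathcal T_p(\bm P_\xi^N,\bm\eps)$ is immediate from the first part of Proposition~\ref{prop:containment}, since $\bm P_\xi^N$ is a product distribution in $\P_p(\Xi)$. The second inclusion $\mathcal T_p(\bm P_\xi^N,\bm\eps)\subset\mathcal B_p(\bm P_\xi^N,\varepsilon)$ follows from Proposition~\ref{prop:hyperrectangles:geomtery} applied with $Q=\bm P_\xi^N$ and the product metric \eqref{product:metric} induced by $\|\cdot\|_q$ (which is of exactly that form), yielding the enclosing radius $n^{\max\{0,1/q-1/p\}}\big(\sum_{k=1}^n\eps_k^p\big)^{1/p}$. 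The remaining task is purely computational: substitute $\eps_k=\rho_\Xi\,\widehat C(\beta_k,p,q,d_k)\,N^{-1/d_k}$ into this expression and bound it by \eqref{epsilon:decay:rectangles}. I would bound each $N^{-1/d_k}\le N^{-1/d_{\max}}$, pull $\rho_\Xi$ and $N^{-1/d_{\max}}$ out, and be left with bounding $\big(\sum_{k=1}^n\widehat C(\beta_k,p,q,d_k)^p\big)^{1/p}$ by $c\,n^{1/p}\widehat C(\beta,p,q,d)$. This is where the constant $c=(\sqrt{2q+1}+1)/(2e^{(\sqrt{2q+1}+1)^2/8})$ enters: one needs a uniform (in $k$) bound $\widehat C(\beta_k,p,q,d_k)\le c\,\widehat C(\beta,p,q,d)$, and then $\big(\sum_{k=1}^n(\cdot)^p\big)^{1/p}\le n^{1/p}\max_k(\cdot)$ finishes it. Unwinding the definition of $\widehat C$, this uniform bound amounts to controlling $d_k^{1/q}2^{(d_k-2)/2p}(\cdots)$ and the $(\ln\beta_k^{-1})^{1/2p}$ term; since $d_k\le d$ and $\beta_k\le\beta$ only helps the log term in the wrong direction, the delicate point is that the combinatorial prefactor in $C(d_k,p)$ is \emph{smaller} for smaller $d_k$, so $\widehat C(\beta_k,\cdot,\cdot,d_k)$ is actually dominated by $\widehat C(\beta,\cdot,\cdot,d)$ up to the explicit factor $c$ — I expect $c$ arises from optimizing the ratio $d_k^{1/q}/d^{1/q}$ against the sub-Gaussian tail term, which is where the $\sqrt{2q+1}$ and the Gaussian-type exponential $e^{(\cdot)^2/8}$ come from.

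\textbf{Main obstacle.} The conceptual steps are all handed to us by Theorem~\ref{thm:hyperrectangle:confidence}, Corollary~\ref{cor:confidence}, and Propositions~\ref{prop:containment}, \ref{prop:hyperrectangles:geomtery}, and~\ref{prop:compact:radius}; the real work is the explicit constant tracking that produces $c$ and verifies the uniform domination $\widehat C(\beta_k,p,q,d_k)\le c\,\widehat C(\beta,p,q,d)$ for every $k\in[n]$ — in particular handling the worst case over $d_k\in\{2p+1,\ldots,d\}$ and over the split $\beta_k=\beta d_k/d$. I would isolate this as a standalone calculus lemma before assembling the final estimate, so that the proof of the proposition itself reduces to citing the three containment/confidence results and then invoking that lemma.
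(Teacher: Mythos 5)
Your plan matches the paper's proof essentially step for step: the confidence claim follows from Proposition~\ref{prop:compact:radius} applied componentwise, Theorem~\ref{thm:hyperrectangle:confidence}/Corollary~\ref{cor:confidence}, and $\prod_k(1-\beta_k)\ge 1-\sum_k\beta_k=1-\beta$; the inclusions come from Proposition~\ref{prop:containment} and Proposition~\ref{prop:hyperrectangles:geomtery} (the paper in fact cites the underlying Proposition~\ref{prop:Wasserstein:product} directly, which is the same estimate); and you correctly isolate the only nontrivial step as the uniform bound $\widehat C(\beta_k,p,q,d_k)\le c\,\widehat C(\beta,p,q,d)$. The one thing you leave unverified is exactly that bound: the paper proves it by writing the ratio as $r_k^{1/q}\big(C(d,p)+(\ln\beta^{-1})^{1/2p}\big)/\big(C(d_k,p)+(\ln\beta_k^{-1})^{1/2p}\big)$ with $r_k:=d/d_k$, using monotonicity of $C(\cdot,p)$ and subadditivity of $t\mapsto t^{1/2p}$ to reduce it to $r_k^{1/q}/\big(1+(\ln r_k)^{1/2}\big)$, and then minimizing the single-variable function $\xi\mapsto\xi^{1/q}/(1+(\ln\xi)^{1/2})$ over $\xi\ge 1$ (the stationarity condition $\zeta^2+\zeta=q/2$ with $\zeta=(\ln\xi)^{1/2}$ is where $\sqrt{2q+1}$ enters), so your guess about the origin of $c$ is accurate in spirit, just not carried through.
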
 

\begin{figure}[h]
	\centering
	\includegraphics[width=.6\linewidth]{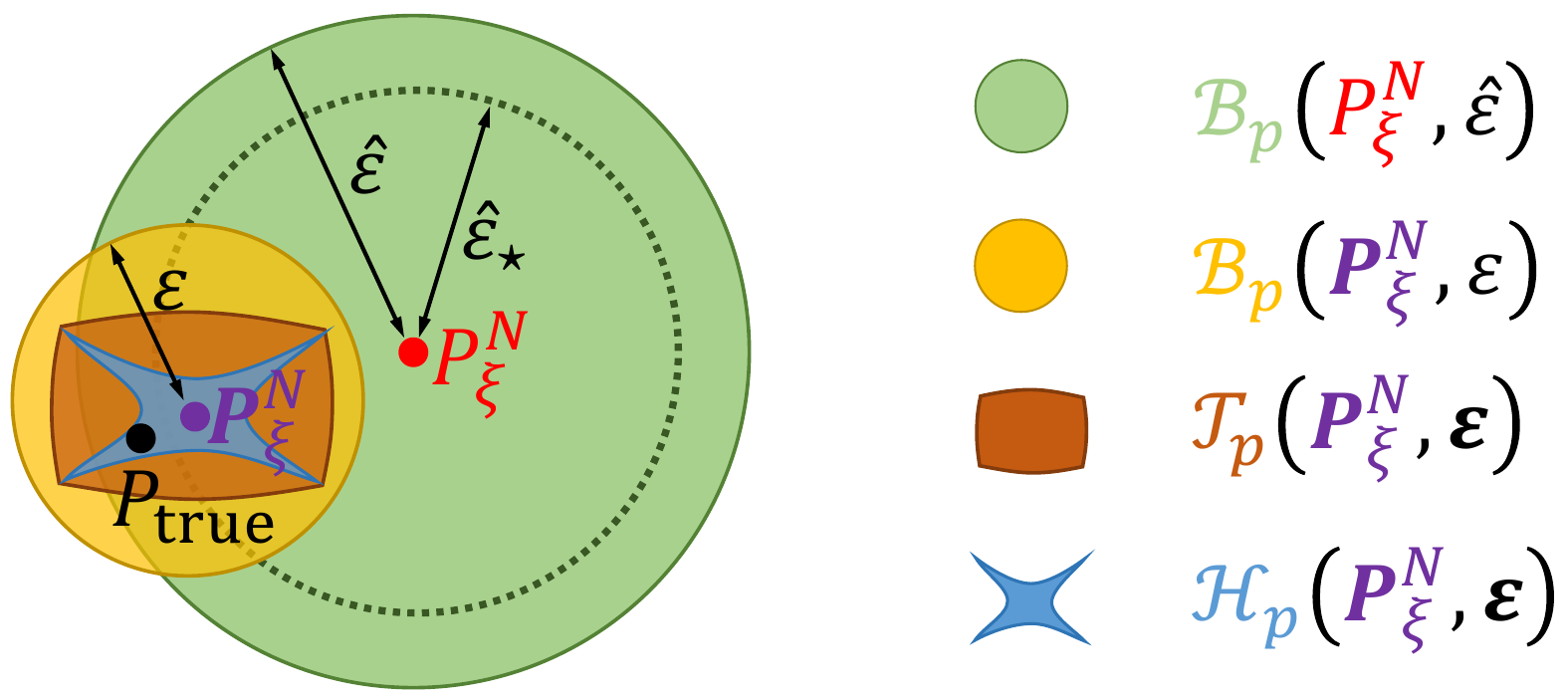} \\ 
	\caption{The figure shows the  hyperrectangles $\mathcal H_p(\bm P_\xi^N,\bm\varepsilon)$ (in blue),  $\mathcal T_p(\bm P_\xi^N,\bm\varepsilon)$ (in dark red), and their enclosing ball $\mathcal B_p(\bm P_\xi^N,\varepsilon)$ with $\eps$ given in \eqref{epsilon:decay:rectangles} (in yellow), which are all centered at the product empirical distribution $\bm P_\xi^N$, as well as the monolithic Wasserstein ball $\mathcal B_p(P_\xi^N,\eps)$ (in green) around the empirical distribution $P_\xi^N$. All  sets contain the true distribution, which is always outside the dashed ball around  $P_\xi^N$ with radius equal to the lower bound $\widehat\eps_\star$ in \eqref{eq:lower:bound1}.} \label{fig:rectangles:vs:balls}
\end{figure}

Under the assumptions of Proposition~\ref{prop:hyperrectangle:containment}, we can compare the size of both hyperrectangles and a monolithic ball that contains $P_\xi$ with the same confidence. If we use the bounds of Proposition \ref{prop:compact:radius}, then the radius $\varepsilon$ of the Wasserstein ball that encloses the hyperrectangles is guaranteed to be strictly smaller than the radius $\widehat\varepsilon$ of the monolithic ball when 
\begin{align*}
N\ge\big(cn^{1/p+\max\{0,1/q-1/p\}}\big)^{1/d_{\max}-1/d}
\end{align*}
and decreases much faster for larger $N$, where $N^{-\frac{1}{d_{\rm max}}}\ll N^{-\frac{1}{d}}$  (cf. Figure~\ref{fig:rectangles:vs:balls}). The  center $\bm P_\xi^N$ of the ball enclosing the hyperrectangles is different from the center $P_\xi^N$ of the monolithic ball, since $P_\xi^N$ is the empirical distribution $\frac{1}{N}\sum_{i=1}^N\delta_{\xi^i}$, whereas $\bm{P}_\xi^N$ is the product empirical distribution
\begin{align*}
P_{\xi_1}^N\otimes\cdots\otimes P_{\xi_n}^N=\frac{1}{N^n}\sum_{(i_1,\ldots,i_n)\in[N]^n}\delta_{(\xi_1^{i_1},\ldots,\xi_n^{i_n})}.
\end{align*}
\begin{figure}[h]
    \centering
    \hspace*{-2.8cm}
    \includegraphics[width=1.33\linewidth]{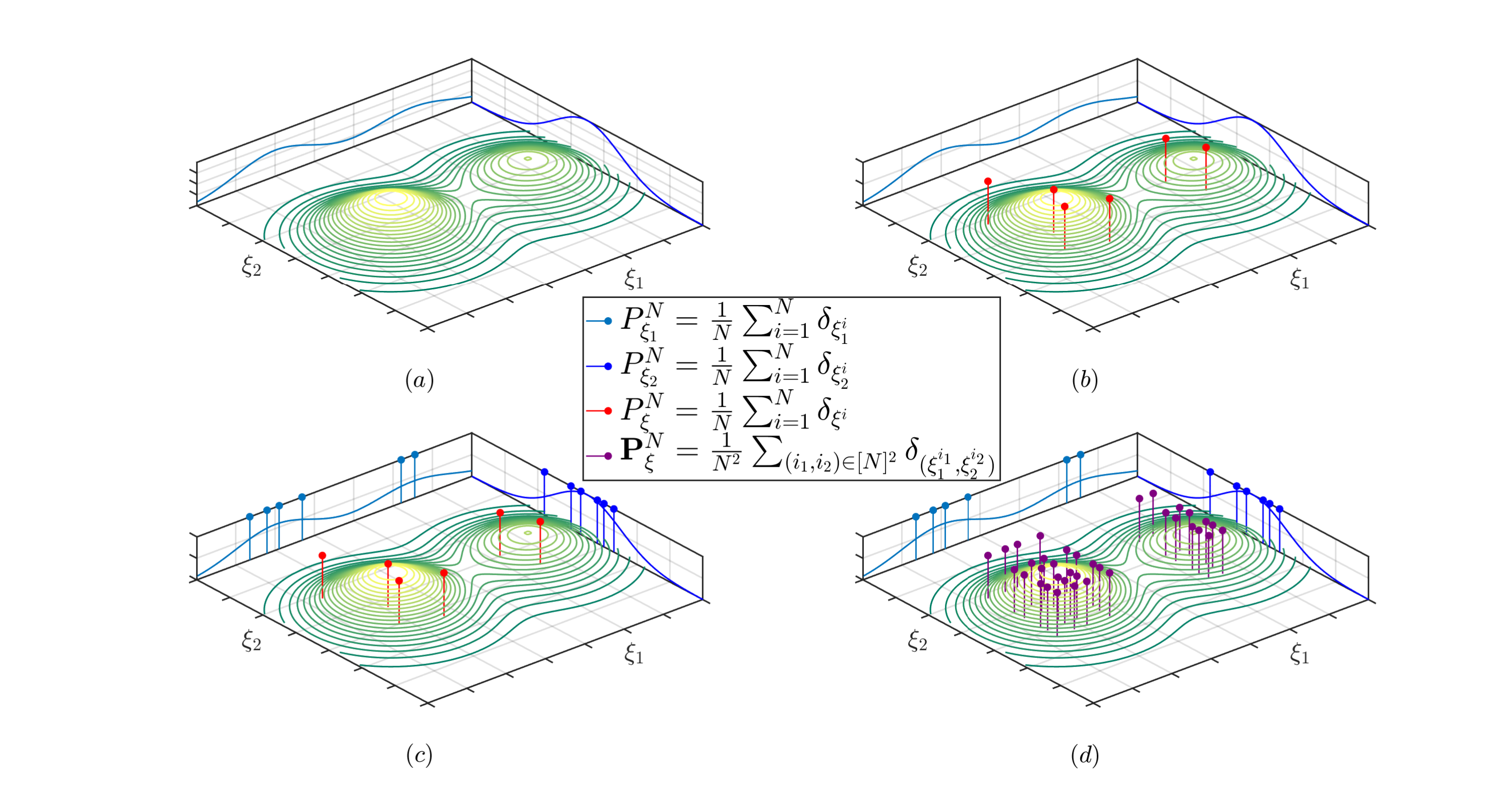}
    \caption{The figure illustrates an example of the reference  distributions $P_\xi^N$ and $\bm P_\xi^N$. (a) shows a contour plot of the product distribution $P_\xi=P_{\xi_1}\otimes P_{\xi_2}$, together with its marginals $P_{\xi_1}$ and $P_{\xi_2}$. (b) depicts the empirical distribution $P_\xi^N$ in red, which is formed by taking six samples from the true distribution $P_\xi$. (c) shows the marginals $P_{\xi_1}^N$ and $P_{\xi_2}^N$ of the empirical distribution $P_\xi^N$ in light-blue and blue, respectively.  Finally, (d) illustrates the product empirical distribution $\bm P_\xi^N=P_{\xi_1}^N\otimes P_{\xi_2}^N$ in purple, which is formed by taking the product of the marginal empirical distributions. The product empirical distribution $\bm P_\xi^N$ is clearly an improved approximation of the true distribution compared to the empirical distribution $P_\xi^N$.}
    \label{fig:virtual:samples}
\end{figure}
(cf. Figures~\ref{fig:rectangles:vs:balls},~\ref{fig:virtual:samples}). This also implies that under Assumption~\ref{assumption:independence}, an ambiguity ball that is centered at the product empirical distribution $\bm P_\xi^N$ will contain the true distribution with significantly higher probability compared to when it is centered at the empirical distribution $P_\xi^N$. 

The favorable decay rate of the ambiguity rectangles is further justified by the fact that the corresponding radius of monolithic Wasserstein balls can in principle not be improved, besides potentially a constant factor that is independent of the samples.  
Indeed, for any distribution $P_\xi\in\mathcal P_p(\Xi)$ for which ${\rm supp}(P_\xi)$ has a non-empty interior in $\Rat{d}$, 
\begin{align}
\widehat\eps_\star:=\Big( \frac{d}{d+p}\Big)^{1/p} C_\star^{-1/d} \frac{1}{N^{1/d}} \le W_p(P_\xi^N,P_\xi) \label{eq:lower:bound1}
\end{align}
always holds for some $C_\star>0$, since the lower bound $\widehat\eps_\star$ in \eqref{eq:lower:bound1} holds for any discrete distribution in place of $P_\xi^N$ that is supported on $N$ points (cf.~\cite[Proposition 4.2]{BK:12}). Namely, to contain the distribution $P_\xi$ with nonzero probability, the monolithic ambiguity ball centered at $P_\xi^N$ needs to have a radius at least $\widehat\eps_\star$, as shown with the dashed circle in Figure~\ref{fig:rectangles:vs:balls}, which shrinks at the same rate $N^{-1/d}$ as the radius $\widehat\eps$ in \eqref{explicit:ambiguity:radius}.

\begin{rem} \longthmtitle{Tightness of hyperrectangle bounds} 
When the lower-dimensional components of $\xi$ have the same dimension, i.e., $d_k=d/n=d_{\rm max}$ for all $k\in[n]$, the  radius $\eps$ of the Wasserstein ball enclosing the ambiguity hyperrectangles decays optimally. Here optimality is interpreted in the sense that  $\eps$ has at least the same decay rate as the Wasserstein distance between the true distribution and any discrete distribution with the same number of points as the product empirical distribution (a faster decay would otherwise imply that the enclosing ball will eventually contain the true distribution with zero probability). Indeed, assuming again that ${\rm supp}(P_\xi)$ has a non-empty interior, we get from \cite[Proposition 4.2]{BK:12} that the bound
\begin{align*}
  \Big( \frac{d}{d+p}\Big)^{1/p} C_\star^{-1/d} \frac{1}{N^{n/d}} \le W_p(Q,P_\xi)
\end{align*}
always holds for any discrete distribution $Q$ that is supported on $N^n$ points. Then the conclusion follows from \eqref{epsilon:decay:rectangles} and the fact that $N^{1/d_{\max}}=N^{n/d}$. 
\end{rem}

\section{DRO reformulations over ambiguity hyperrectangles}
\label{sec:duality}

In this section, we provide dual reformulations for the DRO problem \eqref{DR problem} when the ambiguity set $\mathcal P^N$ is the Wasserstein hyperrectangle in \eqref{hyperrectangle}, or the multi-transport hyperrectangle in \eqref{transport hyperrectangle}. Namely, we provide equivalent forms of the problem, which avoid the maximization over the space of probability distributions and are a stepping stone to obtain tractable optimization algorithms. We are therefore interested in reformulating the inner maximization problem   
\begin{align*}
     \sup_{P\in\mathcal P^N}
     \mathbb{E}_{P}[h(\xi)],
\end{align*}
where we fix the decision variable $x$ in \eqref{DR problem} and denote $h(\xi):=f(x,\xi)$ to facilitate notation. We next provide dual reformulations of these problems, first for Wasserstein hyperrectangles and then for multi-transport hyperrectangles. The former reformulations are applicable to a narrower class of objective functions because  Wasserstein hyperrectangles are non-convex. Nevertheless, these reformulations provide sharper results since Wasserstein hyperrectangles are typically strictly contained inside multi-transport hyperrectangles. 

\subsection{Dual reformulations over Wasserstein hyperrectangles}
\label{subsec:duality:Wasserstein:hyperrectangles}

Here we provide dual reformulations of the DRO problem \eqref{DR problem} when the set $\Xi$ where the distribution is supported has the product-structure of Assumption \ref{assumption:independence}(i) and the ambiguity set $\mathcal P^N$ is the Wasserstein hyperrectangle $\mathcal H_p(Q,\bm\varepsilon)$ for some product distribution $Q=Q_1\times\cdots\times Q_n$. Thus, we are interested to determine the dual of the inner problem
\begin{align}
     \sup_{P\in\mathcal H_p(Q,\bm\varepsilon)}
     \mathbb{E}_{P}[h(\xi)].
     \label{inner:maximization0}
\end{align}
To this end, we assume that $h$ can be written as the sum or product of functions that depend only on the individual components of $\xi$ and are integrable with respect to the corresponding marginals of the reference distribution.

\begin{assumption}
\longthmtitle{Sum/product decomposition}
(i) The objective function $h$ can be expressed as the sum of upper semicontinuous functions or the product of nonnegative upper semicontinuous functions that depend only on the respective components of the random variable. Namely, 
\begin{subequations}
\begin{align}
h(\xi) & =\sum_{k=1}^n h_k(\xi_k)  \label{assumption:sum} \\
{\rm or}\quad h(\xi) & = \prod_{k=1}^n h_k(\xi_k),\qquad h_k(\xi_k)\ge 0. 
\label{assumption:prod}
\end{align}
\end{subequations} 
\label{assumption f sum prod}
(ii) Each function $h_k$ is integrable with respect to $Q_k$.  
\end{assumption}
We will use the following strong duality result for the maximization over Wasserstein balls.
\begin{prop}\label{lemma dual}
\longthmtitle{DRO dual over Wasserstein balls \cite[Theorem 1]{JB-KM:19}}
Given a Polish space $\Xi$, consider the Wasserstein ball $\mathcal{B}_p(Q,\varepsilon)$ with $Q\in\mathcal P_p(\Xi)$ and the upper semicontinuous function $h\in L^1(Q)$. Then 
\begin{align*}
\sup_{P\in\mathcal{B}_p(Q,\varepsilon)}\mathbb{E}_{P}[h(\xi)] =\inf_{\lambda\ge 0} \int_{\Xi}\sup_{\xi\in\Xi}\{h(\xi)+\lambda (\varepsilon^p-\rho(\zeta,\xi)^p)\}dQ(\zeta).
\end{align*}
\end{prop}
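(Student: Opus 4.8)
The plan is to deduce this result by applying the general Wasserstein DRO duality theorem of \cite{JB-KM:19} to the cost function $c(\zeta,\xi)=\rho(\zeta,\xi)^p$ on the Polish space $\Xi$ and then simplifying the resulting expression. First I would recall the statement of \cite[Theorem~1]{JB-KM:19} in the form relevant here: for an upper semicontinuous integrand $h\in L^1(Q)$ and a nonnegative lower semicontinuous cost $c$ with $c(\zeta,\zeta)=0$, one has
\begin{align*}
\sup_{P:\,W_c(Q,P)\le\varepsilon^p}\mathbb E_P[h(\xi)]
=\inf_{\lambda\ge 0}\Big\{\lambda\varepsilon^p+\int_\Xi\sup_{\xi\in\Xi}\big(h(\xi)-\lambda\,c(\zeta,\xi)\big)\,dQ(\zeta)\Big\},
\end{align*}
where the constraint set is exactly the Wasserstein ball once we put $c=\rho(\cdot,\cdot)^p$ and observe $W_c(Q,P)=W_p^p(Q,P)\le\varepsilon^p\iff W_p(Q,P)\le\varepsilon$. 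The second step is purely cosmetic: pull the constant $\lambda\varepsilon^p$ inside the integral over the probability measure $Q$ (so it contributes $\lambda\varepsilon^p$), and combine it with the inner supremum, giving
\begin{align*}
\lambda\varepsilon^p+\sup_{\xi\in\Xi}\big(h(\xi)-\lambda\rho(\zeta,\xi)^p\big)
=\sup_{\xi\in\Xi}\big\{h(\xi)+\lambda(\varepsilon^p-\rho(\zeta,\xi)^p)\big\},
\end{align*}
which is exactly the claimed right-hand side.

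The substantive points to verify so that \cite[Theorem~1]{JB-KM:19} legitimately applies are the hypotheses of that theorem: (a) $\Xi$ Polish, which is assumed; (b) the cost $c=\rho^p$ is nonnegative, lower semicontinuous (indeed continuous, since $\rho$ is a metric and $p\ge 1$), and vanishes on the diagonal; (c) the integrand $h$ is upper semicontinuous and lies in $L^1(Q)$, both assumed; and (d) the growth/moment compatibility condition under which strong duality (no duality gap, dual attainment not required) holds — here $Q\in\mathcal P_p(\Xi)$ guarantees $\int\rho(\zeta,\xi_0)^p\,dQ(\zeta)<\infty$ for a fixed $\xi_0$, which together with upper semicontinuity of $h$ is what \cite{JB-KM:19} use to rule out a duality gap and to ensure the interchange of $\sup$ over transport plans with the Lagrangian is valid. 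I would also note the measurability of $\zeta\mapsto\sup_{\xi\in\Xi}\{h(\xi)+\lambda(\varepsilon^p-\rho(\zeta,\xi)^p)\}$, which follows because the map is an upper semicontinuous (hence Borel) function of $\zeta$ when $h$ is upper semicontinuous and $\rho$ continuous — this is implicit in \cite{JB-KM:19} but worth a one-line remark so the integral on the right is well defined.

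The main obstacle, such as it is, is bookkeeping rather than mathematics: matching our parametrization (ball radius $\varepsilon$ in the $W_p$ metric) to the parametrization in \cite{JB-KM:19} (budget on the transport cost $\mathbb E_\pi[c]$), being careful that their budget is $\varepsilon^p$ and not $\varepsilon$, and confirming that the degenerate case $\varepsilon=0$ (where the ball is $\{Q\}$ and both sides equal $\mathbb E_Q[h]$, attained at $\lambda\to\infty$) is consistent with the formula. Since \cite[Theorem~1]{JB-KM:19} is invoked verbatim, no independent minimax argument is needed; the proof is essentially a specialization plus the one-line algebraic rearrangement above.
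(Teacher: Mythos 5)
Your proposal is correct and takes essentially the same approach as the paper: Proposition~\ref{lemma dual} is stated there as a direct citation of \cite[Theorem 1]{JB-KM:19} with no proof given, and your argument is simply the specialization of that theorem to the cost $c=\rho^p$ with the budget $\varepsilon^p$, together with the one-line absorption of $\lambda\varepsilon^p$ into the inner supremum --- precisely what the paper leaves implicit in invoking the reference.
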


The following result establishes strong duality for DRO problems with Wasserstein hyperrectangles when the objective function satisfies Assumption \ref{assumption f sum prod}.
\begin{prop} \label{prop:hyperrectangle:reformulation}
\longthmtitle{DRO dual over Wasserstein hyperrectangles}
Let the objective function $h$ of \eqref{inner:maximization0} satisfy Assumption \ref{assumption f sum prod}. Then \eqref{inner:maximization0} admits the duals \begin{subequations}
\begin{align}
   &\inf_{\bm\lambda\succeq0} \sum_{k=1}^n\int_{\Xi_k}\sup_{\xi\in\Xi_k}\{h_k(\xi_k)+\lambda_k(\varepsilon_k^p-\rho_k(\zeta_k,\xi_k)^p)\}dQ_k(\zeta_k)\label{dual sum}\\
    &\inf_{\bm\lambda\succeq0}\prod_{k=1}^n\int_{\Xi_k}\sup_{\xi\in\Xi_k}\{h_k(\xi_k)+\lambda_k (\varepsilon_k^p-\rho_k(\zeta_k,\xi_k)^p)\}dQ_k(\zeta_k),
    \label{dual prod}
\end{align}
\label{eq:sum/prod dual}
\end{subequations}
corresponding to Assumptions \eqref{assumption:sum} and \eqref{assumption:prod}, respectively, where $\bm\lambda=(\lambda_1,\ldots,\lambda_n)$.
\end{prop}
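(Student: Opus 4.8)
The plan is to reduce the maximization over the Wasserstein hyperrectangle $\mathcal H_p(Q,\bm\varepsilon)$ to $n$ independent maximizations over the lower-dimensional Wasserstein balls $\mathcal B_p(Q_k,\varepsilon_k)$, and then to apply Proposition~\ref{lemma dual} to each. The key observation is that if $P=P_1\otimes\cdots\otimes P_n$ with $P_k\in\mathcal B_p(Q_k,\varepsilon_k)$, then by Fubini's theorem the expectation of a sum decomposes as $\mathbb E_P\big[\sum_k h_k(\xi_k)\big]=\sum_k\mathbb E_{P_k}[h_k(\xi_k)]$, and the expectation of a product of nonnegative functions decomposes as $\mathbb E_P\big[\prod_k h_k(\xi_k)\big]=\prod_k\mathbb E_{P_k}[h_k(\xi_k)]$ (here nonnegativity is what makes the product formula hold and what lets us argue the supremum of the product equals the product of the suprema). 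Since the constraints on the $P_k$ are decoupled — $P$ ranges over $\mathcal H_p(Q,\bm\varepsilon)$ exactly when each $P_k$ ranges freely over $\mathcal B_p(Q_k,\varepsilon_k)$ — we get
\begin{align*}
\sup_{P\in\mathcal H_p(Q,\bm\varepsilon)}\mathbb E_P[h(\xi)]=\sum_{k=1}^n\sup_{P_k\in\mathcal B_p(Q_k,\varepsilon_k)}\mathbb E_{P_k}[h_k(\xi_k)]
\end{align*}
in the sum case, and the analogous identity with $\sum$ replaced by $\prod$ in the product case. For the product case one should note that each inner supremum is nonnegative (take $P_k=\delta_{\zeta}$-type perturbations, or just $P_k=Q_k$ if $\mathbb E_{Q_k}[h_k]\ge0$; in any case the supremum over a set containing product-compatible measures of nonnegative functions is $\ge0$), so that distributing the supremum across the product is legitimate.

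Next I would invoke Proposition~\ref{lemma dual} for each $k\in[n]$: Assumption~\ref{assumption f sum prod}(i) guarantees each $h_k$ is upper semicontinuous and Assumption~\ref{assumption f sum prod}(ii) guarantees $h_k\in L^1(Q_k)$, so the hypotheses are met on each Polish space $\Xi_k$ with metric $\rho_k$. This yields
\begin{align*}
\sup_{P_k\in\mathcal B_p(Q_k,\varepsilon_k)}\mathbb E_{P_k}[h_k(\xi_k)]=\inf_{\lambda_k\ge0}\int_{\Xi_k}\sup_{\xi_k\in\Xi_k}\{h_k(\xi_k)+\lambda_k(\varepsilon_k^p-\rho_k(\zeta_k,\xi_k)^p)\}\,dQ_k(\zeta_k)
\end{align*}
for each $k$. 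Substituting into the decompositions above gives \eqref{dual sum} directly (the infimum over the vector $\bm\lambda\succeq0$ splits into a sum of independent scalar infima), and for the product case one uses that each factor is a nonnegative infimum of nonnegative quantities, so $\prod_k\inf_{\lambda_k\ge0}(\cdots)=\inf_{\bm\lambda\succeq0}\prod_k(\cdots)$, yielding \eqref{dual prod}.

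The main obstacle is making the interchange of supremum and sum/product rigorous, in particular verifying the nonemptiness/attainability subtleties: one must check that the set $\mathcal H_p(Q,\bm\varepsilon)$ is nonempty (it contains $Q$ itself), that the decomposition $\sup_P\mathbb E_P[\sum h_k]=\sum_k\sup_{P_k}\mathbb E_{P_k}[h_k]$ holds even when some inner suprema are $+\infty$ (handled by the convention $+\infty+a=+\infty$ and the fact that a product measure realizing near-optimal $P_k$ simultaneously can always be formed), and — for the product case — that one never multiplies $0$ by $+\infty$; the nonnegativity of the $h_k$ and a short argument that the inner suprema are strictly bounded away from problematic cases, or an explicit $\varepsilon$-optimal construction $P=P_1^\star\otimes\cdots\otimes P_n^\star$, handles this. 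A secondary point is that the product-of-integrals expression in \eqref{dual prod} is a correct dual only because each integral (for fixed $\lambda_k$) is nonnegative — this follows since $h_k\ge0$ and one may take $\xi_k=\zeta_k$ in the inner sup to get an integrand $\ge h_k(\zeta_k)+\lambda_k\varepsilon_k^p\ge0$ — so distributing $\inf_{\bm\lambda}$ over the product is valid. Once these measure-theoretic checks are in place, the result follows by assembling the pieces.
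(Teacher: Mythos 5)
Your proposal follows essentially the same route as the paper's proof: use the product structure of $\mathcal H_p(Q,\bm\varepsilon)$ together with Fubini's theorem to decouple the expectation into per-component terms, apply Proposition~\ref{lemma dual} to each lower-dimensional ball $\mathcal B_p(Q_k,\varepsilon_k)$, and then pull the sum (resp.\ product, using nonnegativity) through the infimum over $\bm\lambda$. The additional remarks you make about nonemptiness and about why the sup/sum and sup/product interchanges are legitimate are exactly the points the paper treats implicitly, so this is the same argument with a bit more commentary.
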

\begin{proof}
The derivation of \eqref{dual sum} follows from the fact that under \eqref{assumption:sum},
\begin{align*}
    \sup_{P\in\mathcal{H}_p(Q,\bm\varepsilon)}\mathbb{E}_{P}\big[h(\xi) \big] &\overset{(a_\Sigma)}{=}\sup_{P\in\mathcal{H}_p(Q,\bm\varepsilon)}\mathbb{E}_{P}\bigg[\sum_{k=1}^nh_k(\xi_k) \bigg] \\
     & \overset{(b_\Sigma)}{=} \sup_{P_{k}\in\mathcal{B}_p(Q_k,\varepsilon_k),k\in[n]} 
    \mathbb{E}_{P_{1}\otimes\cdots\otimes P_{n}}\bigg[\sum_{k=1}^nh_k(\xi_k) \bigg]  \\
    & \overset{(c_\Sigma)}{=}\underset{P_{k}\in\mathcal{B}_p(Q_k,\varepsilon_k),k\in[n]}{\mathrm{sup}}\sum_{k=1}^n\mathbb{E}_{P_{1}\otimes\cdots\otimes P_{n}}\big[h_k(\xi_k) \big] \\
    & \overset{(d_\Sigma)}{=}\underset{P_{k}\in\mathcal{B}_p(Q_k,\varepsilon_k),k\in[n]}{\mathrm{sup}}\sum_{k=1}^n\mathbb{E}_{P_{k}}\big[h_k(\xi_k) \big]\\
    &\overset{(e_\Sigma)}{=}\sum_{k=1}^n\inf_{\lambda_k\ge0}\int_{\Xi_k}\sup_{\xi\in\Xi_k}\{h_k(\xi_k)+\lambda_k (\varepsilon_k^p-\rho_k(\zeta_k,\xi_k)^p)\}dQ_k(\zeta_k)\\
    &\overset{(f_\Sigma)}{=}\inf_{\bm\lambda\succeq0}\sum_{k=1}^n\int_{\Xi_k}\sup_{\xi\in\Xi_k}\{h_k(\xi_k)+\lambda_k (\varepsilon_{k}^p-\rho_k(\zeta_k,\xi_k)^p)\}dQ_k(\zeta_k).
\end{align*}
Here,  $(a_\Sigma)$ is a consequence of Assumption \ref{assumption f sum prod} and $(b_\Sigma)$ follows from the definition of the Wasserstein hyperrectangle in \eqref{hyperrectangle}. Linearity of the expectation yields $(c_\Sigma)$ and $(d_\Sigma)$ follows by exploiting Fubini's theorem (cf.~\cite[Theorem 14.19]{AK:13}). To derive $(e_\Sigma)$ we used Proposition \ref{lemma dual} and $(f_\Sigma)$ follows from the fact that $\sum_k  \inf_{\lambda_k\ge0}\psi_k(\lambda_k)=\inf_{\bm\lambda\succeq 0}\sum_k\psi_k(\lambda_k)$ for any functions $\psi_k$.
Thus, \eqref{dual sum} holds.

In a similar manner, under \eqref{assumption:prod}, we have  
\begin{align*}
\sup_{P\in\mathcal{H}_p(Q,\bm\varepsilon)}\mathbb{E}_{P}\big[h(\xi) \big] 
& \overset{(a_\Pi)}{=}\underset{P_{k}\in\mathcal{B}_p(Q_k,\varepsilon_k),k\in[n]}\sup\mathbb E_{P_{1}}\Big[\cdots\mathbb E_{P_{n}}\Big[\prod_{k=1}^nh_k(\xi_k)\Big]\cdots\Big]\\
    & \overset{(b_\Pi)}{=}  \underset{P_{k}\in\mathcal{B}_p(Q_k,\varepsilon_k),k\in[n]}{\mathrm{sup}}\prod_{k=1}^n\mathbb{E}_{P_{k}}\big[h_k(\xi_k) \big]\\
    &\overset{(c_\Pi)}{=} \prod_{k=1}^n\inf_{\lambda_k\ge0}\int_{\Xi_k}\sup_{\xi\in\Xi_k}\{h_k(\xi_k)+\lambda_k (\varepsilon_{k}^p-\rho_k(\zeta_k,\xi_k)^p)\}dQ_k(\zeta_k) \\
    &\overset{(d_\Pi)}{=}\inf_{\bm\lambda\succeq0}\prod_{k=1}^n\int_{\Xi_k}\sup_{\xi\in\Xi_k}\{h_k(\xi_k) +\lambda_k (\varepsilon_{k}^p-\rho_k(\zeta_k,\xi_k)^p)\}dQ_k(\zeta_k), 
\end{align*}
namely, \eqref{dual prod} holds. In these derivations, $(a_\Pi)$ follows from Assumption \ref{assumption f sum prod}, \eqref{hyperrectangle}, and  Fubini's theorem (cf. \cite[Theorem 14.19]{AK:13}), and $(b_\Pi)$ from linearity of the expectation. Furthermore, $(c_\Pi)$ follows from Proposition \ref{lemma dual} and $(d_\Pi)$ from the fact that $\prod_k \inf_{\lambda_k\ge0}\psi_k(\lambda_k)=\inf_{\bm\lambda\succeq0}\prod_k\psi_k(\lambda_k)$ for any nonnegative functions $\psi_k$. The proof is now complete. 
\end{proof}

The following corollary provides the dual reformulation of Proposition \ref{lemma dual} for the case when the center of the Wasserstein hyperrectangle is the product empirical distribution.

\begin{corollary}
\longthmtitle{Dual of data-driven Wasserstein  hyperrectangles}
Let $h$ satisfy Assumption~\ref{assumption:h}(i). Then \eqref{inner:maximization0} with $Q\equiv\bm P_\xi^N$ admits the corresponding duals
\begin{align*}
   &\inf_{\bm\lambda\succeq0} \sum_{k=1}^n\frac{1}{N}\sum_{i=1}^N\underset{\xi_k\in\Xi_k}{\mathrm{sup}}\{h_k(\xi_k)+\lambda_k(\varepsilon_k^p-\rho_k(\xi_k^i,\xi_k)^p)\} \\
    &\inf_{\bm\lambda\succeq0}\prod_{k=1}^n\frac{1}{N}\sum_{i=1}^N\underset{\xi_k\in\Xi_k}{\mathrm{sup}}\{h_k(\xi_k)+\lambda_k (\varepsilon_k^p-\rho_k(\xi_k^i,\xi_k)^p) \}. 
 \end{align*}
\end{corollary}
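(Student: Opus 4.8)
The plan is to specialize Proposition~\ref{prop:hyperrectangle:reformulation} to the case $Q\equiv\bm P_\xi^N$ and then simply evaluate the integrals against the empirical marginals. First, I would note that the product empirical distribution $\bm P_\xi^N=P_{\xi_1}^N\otimes\cdots\otimes P_{\xi_n}^N$ has as its $k$th marginal the standard empirical distribution $P_{\xi_k}^N=\frac{1}{N}\sum_{i=1}^N\delta_{\xi_k^i}$ on $\Xi_k$. Indeed, projecting the product measure onto the $k$th coordinate returns exactly the $k$th factor, so ${\rm pr}_{k\#}\bm P_\xi^N=P_{\xi_k}^N$. Hence $Q_k=P_{\xi_k}^N$ in the notation of Proposition~\ref{prop:hyperrectangle:reformulation}.

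Next, I would invoke Proposition~\ref{prop:hyperrectangle:reformulation} directly: under Assumption~\ref{assumption f sum prod}, the inner maximization \eqref{inner:maximization0} with $Q\equiv\bm P_\xi^N$ equals \eqref{dual sum} in the additive case and \eqref{dual prod} in the multiplicative case, with each $Q_k$ replaced by $P_{\xi_k}^N$. It then remains only to carry out the integration $\int_{\Xi_k}g_k(\zeta_k)\,dP_{\xi_k}^N(\zeta_k)$ for the integrand $g_k(\zeta_k):=\sup_{\xi_k\in\Xi_k}\{h_k(\xi_k)+\lambda_k(\varepsilon_k^p-\rho_k(\zeta_k,\xi_k)^p)\}$. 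Since $P_{\xi_k}^N$ is a uniform average of Dirac masses at the samples $\xi_k^1,\ldots,\xi_k^N$, this integral collapses to $\frac{1}{N}\sum_{i=1}^N g_k(\xi_k^i)=\frac{1}{N}\sum_{i=1}^N\sup_{\xi_k\in\Xi_k}\{h_k(\xi_k)+\lambda_k(\varepsilon_k^p-\rho_k(\xi_k^i,\xi_k)^p)\}$. Substituting this into \eqref{dual sum} and \eqref{dual prod} yields the two displayed expressions in the corollary.

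There is essentially no substantive obstacle here, since the result is a direct corollary of Proposition~\ref{prop:hyperrectangle:reformulation} combined with the elementary evaluation of an integral against a discrete measure. The only minor points worth checking are that the supremum defining $g_k$ is measurable in $\zeta_k$ (so that the integral against $P_{\xi_k}^N$ makes sense), which is immediate because integration against a finite combination of Dirac masses only requires evaluating $g_k$ at the finitely many sample points, and that $P_{\xi_k}^N\in\mathcal P_p(\Xi_k)$ so that Proposition~\ref{prop:hyperrectangle:reformulation} applies, which holds since a finitely supported measure trivially has finite $p$th moment. I would also flag the apparent typo in the statement, where Assumption~\ref{assumption:h}(i) should read Assumption~\ref{assumption f sum prod}(i); the proof itself is unaffected.

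\begin{proof}
By definition, $\bm P_\xi^N=P_{\xi_1}^N\otimes\cdots\otimes P_{\xi_n}^N$ is a product distribution whose $k$th marginal is the empirical distribution $P_{\xi_k}^N=\frac{1}{N}\sum_{i=1}^N\delta_{\xi_k^i}$, which lies in $\mathcal P_p(\Xi_k)$ as it is finitely supported. Applying Proposition~\ref{prop:hyperrectangle:reformulation} with $Q\equiv\bm P_\xi^N$ and $Q_k\equiv P_{\xi_k}^N$, the inner maximization \eqref{inner:maximization0} admits the duals \eqref{dual sum} and \eqref{dual prod} under Assumptions \eqref{assumption:sum} and \eqref{assumption:prod}, respectively. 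For each $k\in[n]$ and $\lambda_k\ge 0$, integration against the discrete measure $P_{\xi_k}^N$ gives
\begin{align*}
\int_{\Xi_k}\sup_{\xi_k\in\Xi_k}\{h_k(\xi_k)+\lambda_k(\varepsilon_k^p-\rho_k(\zeta_k,\xi_k)^p)\}\,dP_{\xi_k}^N(\zeta_k)
=\frac{1}{N}\sum_{i=1}^N\sup_{\xi_k\in\Xi_k}\{h_k(\xi_k)+\lambda_k(\varepsilon_k^p-\rho_k(\xi_k^i,\xi_k)^p)\}.
\end{align*}
Substituting this identity into \eqref{dual sum} and \eqref{dual prod} yields the two claimed expressions, which completes the proof.
\end{proof}
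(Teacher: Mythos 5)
Your proof is correct and takes essentially the same approach the paper intends: the corollary is a direct specialization of Proposition~\ref{prop:hyperrectangle:reformulation} with $Q_k\equiv P_{\xi_k}^N$, followed by evaluating the integral against the discrete empirical measure, which collapses to a finite average over the samples. You are also right that the reference to Assumption~\ref{assumption:h}(i) in the corollary statement is a typo and should be Assumption~\ref{assumption f sum prod}(i).
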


\subsection{Dual reformulations over multi-transport hyperrectangles}
\label{subsec:duality:multi:transport}

Here, we provide the dual of the inner maximization problem in \eqref{DR problem} when the ambiguity set $\mathcal P^N$ is the multi-transport hyperrectangle \eqref{transport hyperrectangle}. Namely, we reformulate the problem 
\begin{align}
     \sup_{P\in\mathcal T(Q,\bm\epsilon)}
     \mathbb{E}_{P}[h(\xi)].
     \label{inner:maximization}
\end{align}

To obtain the dual of \eqref{inner:maximization}, we depart from the necessity of Section~\ref{subsec:duality:Wasserstein:hyperrectangles} to have a Polish space with a product structure and assume that the uncertainty $\xi$ belongs to a general Polish space $\Xi$.
Our analysis generalizes the duality approach in \cite{JB-KM:19}, which obtains dual reformulations of DRO problems where distributional ambiguity is captured through a single optimal transport constraint. As in \cite{JB-KM:19}, we make the following assumption for $h$.
\begin{assumption}
\label{assumption:h}
\longthmtitle{Objective function class}
The objective function $h:\Xi\to\Rat{}$ 
is upper semicontinuous and $h\in L^1(Q)$.
\end{assumption}

We also assume the following regarding the cost functions in \eqref{transport:plan:set}.

\begin{assumption} 
\longthmtitle{Transport costs}
(i) For each $k\in[n]$ there exists a nondecreasing sequence $c_{k,m}:\Xi\times \Xi\to\Rat{}_{\ge 0}$, $m\in\mathbb N$ of \textit{continuous} transport costs with $c_{k,m}(\zeta,\zeta)=0$ for all $\zeta$ and $c_{k,m}(\zeta,\xi)\nearrow c_k(\zeta,\xi)\in \Rat{}_{\ge 0}$. \\
(ii)  There exists a compact set $\Xi_{\rm cmp}\subset \Xi$ such that for each $m$, $c_{k,m}$, $k\in[n]$ are linearly independent in $C(\Xi_{\rm cmp}\times\Xi_{\rm cmp})$ and $\text{span}\{c_{1,m},\ldots,c_{n,m}\}\cap C_{2,{\rm const}}(\Xi_{\rm cmp}\times\Xi_{\rm cmp})=\{0\}$. 
\label{assumption:cost:functions}
\end{assumption}

Assumption \ref{assumption:cost:functions} is directly satisfied when $\Xi$ has the product structure of Assumption~\ref{assumption:independence}   and the considered cost functions are powers of the distances between the components of the random variable, i.e., when  $c_k(\zeta,\xi)=\rho_k(\zeta_k,\xi_k)^p$. We next provide some preparatory definitions and sketch the intuition behind the strong dual to problem \eqref{inner:maximization}, which is given later in this section. 

Let $\mathcal I(\pi):=\int_{\Xi\time\Xi}h(\xi)d\pi(\zeta,\xi)$ and consider the set $\P^h(\Xi)$ of distributions $\nu$ on $\Xi$ for which the integral of $h$ is well defined and takes values in $\bar{\mathbb R}$, namely, for which either $\int_\Xi h_+(\xi)d\nu(\xi)\in\Rat{}$ or $\int_{\Xi}h_-(\xi)d\nu(\xi)\in\Rat{}$, were $h_+:=\max\{h,0\}$ and $h_-:=\min\{h,0\}$. Denoting further $\Pi^h(Q):=\{\pi\in\Pi(Q):{\rm pr}_{2\#}\pi\in\P^h(\Xi)\}$ and analogously $\Pi^h(Q,\bm\epsilon)$, and recalling the definition of $\mathcal T_p(Q,\bm\epsilon)$, allows us to rigorously (re)define\footnote{The same argument can be used to resolve potential ambiguities in the reformulations of Section~\ref{subsec:duality:Wasserstein:hyperrectangles} when there are distributions in the ambiguity set that may lead to integrals of the form $+\infty-\infty$. An alternative  way to address this issue is to define $+\infty-\infty=+\infty$ as in 
\cite{JB-KM:19}. Endnote 2 in \cite{JB-KM:19} also clarifies why such ambiguities do not affect the interpretation of the optimization problem.} the DRO problem \eqref{inner:maximization} as  
\begin{align}
\sup_{P\in\mathcal T_p(Q,\bm\epsilon)\cap\mathcal P^h(\Xi)}\int_{\Xi}h(\xi)dP(\xi) =\sup_{\pi\in \Pi^h(Q,\bm\epsilon)}\int_{\Xi\time\Xi}h(\xi)d\pi(\zeta,\xi)=\sup_{\pi\in \Pi^h(Q,\bm\epsilon)}\mathcal I(\pi)=:\mathcal I^\star. \label{inner:maximization:reformulation}
\end{align}
Due to \eqref{transport:plan:set}, this is a linear optimization problem in the space of finite signed measures on $\Xi\times\Xi$. When restricted further over the convex set of probability measures 
\begin{align*}
\Pi_{{\rm fin},\bm c}(Q):=\Big\{\pi\in\Pi^h(Q):\int_{\Xi\time\Xi}c_k(\zeta,\xi)d\pi(\zeta,\xi)<+\infty\;\textup{for all}\;k\in[n]\Big\},
\end{align*}
over which  the integrals of the costs are real-valued, and taking into account the inequality constraints \eqref{transport:plan:set}, which always imply $\Pi^h(Q,\bm\epsilon)\subset\Pi_{{\rm fin},\bm c}(Q)$, its Lagrangian is given by 
\begin{align}
    \mathcal L(\pi,\boldsymbol{\lambda}) & :=\int_{\Xi\times\Xi} h(\xi) d\pi(\zeta,\xi)+\sum_{k=1}^n\lambda_k\Big(\epsilon_k-\int_{\Xi\times\Xi}c_k(\zeta,\xi)d\pi(\zeta,\xi)\Big) \nonumber \\
    & \phantom{:}=\langle\bm\lambda,\bm\epsilon\rangle+\int_{\Xi\times\Xi}(h(\xi)-\langle \bm\lambda,\bm c(\zeta,\xi) \rangle)d\pi(\zeta,\xi), \label{Lagrangian}
\end{align}
where $\boldsymbol{\lambda}=(\lambda_1,\ldots,\lambda_n)\in \Rat{n}_{\ge0}$, $\bm\epsilon=(\epsilon_1,\ldots,\epsilon_n)$, and $\bm c(\zeta,\xi)=(c_1(\zeta,\xi),\ldots,c_n(\zeta,\xi))$. From the definition of $\mathcal L(\pi,\bm\lambda)$ we have 
\begin{align}
\mathcal I^\star  =\sup_{\pi\in\Pi_{{\rm fin},\bm c}(Q)}\inf_{\bm\lambda\succeq0}\mathcal L(\pi,\bm\lambda) 
&= \sup_{\pi\in\Pi_{{\rm fin},\bm c}(Q)}\inf_{\bm\lambda\succeq0} \Big\{\langle\boldsymbol{\lambda, \epsilon}\rangle+\int_{\Xi\times\Xi}(h(\xi)-\langle \bm\lambda,\bm c(\zeta,\xi) \rangle)d\pi(\zeta,\xi)\Big\} \label{I:star}
\end{align}
and we get from the min--max inequality that 
\begin{align}
\mathcal I^\star\le \inf_{\bm\lambda\succeq0}\Big\{\langle\bm\lambda, \bm\epsilon\rangle+\sup_{\pi\in\Pi_{{\rm fin},\bm c}(Q)}\int_{\Xi\times\Xi}(h(\xi)-\langle \bm\lambda,\bm c(\zeta,\xi) \rangle)d\pi(\zeta,\xi)\Big\}.\label{I:star:inequality}
\end{align}

To provide some intuition behind the dual problem to \eqref{inner:maximization}, assume for the moment that $\Xi$ is compact and that $h$ and the cost functions $c_1,\ldots,c_n$ are continuous. In this case we have that $\Pi_{{\rm fin},\bm c}(Q)=\Pi(Q)$ and the maximization problem
\begin{align*}
\sup_{\pi\in \Pi(Q)}\int_{\Xi\times\Xi}(h(\xi)-\langle \bm\lambda,\bm c(\zeta,\xi) \rangle)d\pi(\zeta,\xi)
\end{align*}
is a linear program that can be written in the abstract form  
\begin{align*}
\sup & \langle e,\pi \rangle_1 \\
{\rm s.t.}\; & \mathcal A\pi=b \\
& \pi \succeq 0.
\end{align*}
Here $\mathcal A\equiv {\rm pr}_{1\#}:\mathcal M(\Xi\times\Xi)\to\mathcal M(\Xi)$, $b\equiv Q\in\mathcal M(\Xi)$, $e\equiv h\circ{\rm pr}_2-\sum_{k=1}^n\lambda_k c_k\in C(\Xi\times\Xi)$, $\langle\cdot,\cdot\rangle_1$ denotes the duality between $C(\Xi\times\Xi)$ and $\mathcal M(\Xi\times\Xi)$, and the order $\succeq$ is taken with respect to the cone of positive measures on $\Xi\times\Xi$. By linear programming duality (cf.~\cite{AB:02}), its  dual problem is given by  
\begin{align*}
\inf & \langle \varphi,b \rangle_2 \\
{\rm s.t.}\; & \mathcal A^*\varphi\succeq e. 
\end{align*} 
Here $\mathcal A^*\equiv \mathcal K_{{\rm pr}_1}:C(\Xi)\to C(\Xi\times\Xi)$ is the adjoint of $\mathcal A$, namely the composition, a.k.a. Koopman operator (cf.~\cite[Chapter 4.3]{TE-BF-MH-RN:15}), with $\mathcal K_{{\rm pr}_1}(\varphi):=\varphi\circ{\rm pr}_1$,  $\langle\cdot,\cdot\rangle_2$ denotes the duality between  $C(\Xi)$ and $\mathcal M(\Xi)$, and the order $\succeq$ is taken with respect to the cone of positive continuous functions on $\Xi$. This in turn is an abstract representation of the dual problem   
\begin{align*}
\inf\Big\{\int_\Xi\varphi(\zeta)dQ(\zeta): \varphi\in C(\Xi)\;{\rm and}\; \varphi(\zeta)\ge h(\xi)-\langle \bm\lambda,\bm c(\zeta,\xi)\rangle \;\textup{for all}\;\zeta,\xi\in\Xi\Big\}. 
\end{align*} 

Based on these considerations, we introduce the dual of \eqref{inner:maximization:reformulation} in the general case, where $\Xi$ does not need to be compact and $h$, $c_k$ are not necessarily continuous. To this end, we denote  
\begin{subequations} \label{Lambda:and:J}
\begin{align}
\Lambda \equiv \Lambda(h;c_1,\ldots,c_n) 
& := \Big\{(\bm \lambda,\varphi): \bm \lambda\succeq 0,\varphi\in \mathfrak m_{\mathcal U}(\Xi;\Rat{}\cup\{+\infty\}) \nonumber \\
& \hspace{8.5em} {\rm and}\; \varphi\circ{\rm pr}_1\succeq  h\circ{\rm pr}_2-\sum_{k=1}^n\lambda_k c_k\Big\} \\
\mathcal J(\bm\lambda,\varphi) &  :=\langle\bm\lambda,\bm\epsilon\rangle+\int_\Xi\varphi(\zeta)dQ(\zeta)
\end{align}
\end{subequations}
and consider in analogy to \cite{JB-KM:19}\footnote{From  Assumptions~\ref{assumption:h} and \ref{assumption:cost:functions}(i), it follows that for any function $\varphi\in \mathfrak m_{\mathcal U}(\Xi;\Rat{}\cup\{+\infty\})$ the integral $\int_\Xi\varphi(\zeta)dQ(\zeta)$ is well defined. This ensures that the integral of the function $\varphi_{\bm\lambda}(\zeta):=\sup_{\xi\in\Xi}\{h(\xi)-\langle \bm\lambda,\bm c(\zeta,\xi)\rangle\}$ in the attainable dual pair in Theorem~\ref{thm:strong:duality} is also well defined, since $\varphi_{\bm\lambda}$ is universally measurable (cf. \cite[Page 16]{JB-KM:19} for the justification of this fact). Measurability of the integrands in the dual reformulations of Section~\ref{subsec:duality:Wasserstein:hyperrectangles} is guaranteed in the same way.} the dual problem
\begin{align}
\mathcal J_\star:=\inf_{(\bm\lambda,\varphi)\in\Lambda}\mathcal J(\bm\lambda,\varphi)=
\inf_{\bm\lambda\succeq0}\Big\{\langle\bm\lambda,\bm\epsilon\rangle & +\inf\Big\{\int_\Xi\varphi(\zeta)dQ(\zeta): \varphi\in \mathfrak m_{\mathcal U}(\Xi;\Rat{}\cup\{+\infty\}) \nonumber \\
& {\rm and}\; \varphi(\zeta)\ge h(\xi)-\langle \bm\lambda,\bm c(\zeta,\xi)\rangle \;\textup{for all}\;\zeta,\xi\in\Xi\Big\}\Big\}. \label{J:star}
\end{align}
Then it follows from \eqref{I:star:inequality} that  
\begin{align} \label{weak:duality}
\mathcal I^\star\le\mathcal J_\star. 
\end{align}

The establishment of strong duality between the primal optimization problem and its dual hinges on showing that the reverse inequality also holds. Its proof is given in Appendix~\ref{appendix:duality:proof} and it is based on appropriate modifications of the technical approach developed in \cite{JB-KM:19}. 

\begin{thm}
\label{thm:strong:duality}
\longthmtitle{DRO dual over multi-transport hyperrectangles}
Consider the problem \eqref{inner:maximization} and let $h$ and $c_1,\ldots,c_n$ satisfy Assumptions \ref{assumption:h} and \ref{assumption:cost:functions}, respectively. Then 
\begin{align}
   \mathcal I^\star=\mathcal J_\star=\inf_{\bm\lambda\succeq0}\Big\{\langle\bm\lambda, \bm\epsilon\rangle+\int_{\Xi}\sup_{\xi\in\Xi}\{h(\xi)-\langle \bm\lambda,\bm c(\zeta,\xi) \rangle \}dQ(\zeta)\Big\} \label{eq:strong duality}
\end{align}
and there exist $(\bm\lambda,\varphi_{\bm\lambda})\in\Lambda$ with $\varphi_{\bm\lambda}(\zeta):=\sup_{\xi\in\Xi}\{h(\xi)-\langle \bm\lambda,\bm c(\zeta,\xi) \rangle \}$, $\zeta\in\Xi$, for which the infimum in \eqref{J:star} is attained.
\end{thm}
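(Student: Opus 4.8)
\textbf{Plan of proof for Theorem~\ref{thm:strong:duality}.}
The goal is to upgrade the weak duality \eqref{weak:duality} to an equality and, simultaneously, exhibit the attaining dual pair. In light of \eqref{weak:duality} it suffices to prove $\mathcal J_\star\le\mathcal I^\star$, and we follow the approximation-and-compactification strategy of \cite{JB-KM:19}, adapted to the vector of $n$ simultaneous transport constraints. First I would record the elementary fact that for any fixed $\bm\lambda\succeq 0$ the candidate $\varphi_{\bm\lambda}(\zeta):=\sup_{\xi\in\Xi}\{h(\xi)-\langle\bm\lambda,\bm c(\zeta,\xi)\rangle\}$ lies in $\mathfrak m_{\mathcal U}(\Xi;\Rat{}\cup\{+\infty\})$ (by the measurable-selection argument referenced in the footnote, applied to the upper semicontinuous integrand), so $(\bm\lambda,\varphi_{\bm\lambda})\in\Lambda$ and hence
\[
\mathcal J_\star\le\inf_{\bm\lambda\succeq0}\Big\{\langle\bm\lambda,\bm\epsilon\rangle+\int_\Xi\varphi_{\bm\lambda}(\zeta)\,dQ(\zeta)\Big\}.
\]
Since $\varphi_{\bm\lambda}$ is pointwise the smallest admissible $\varphi$ for that $\bm\lambda$, the reverse inequality is immediate, so $\mathcal J_\star$ equals the right-hand side of \eqref{eq:strong duality}; this already reduces the theorem to proving $\mathcal I^\star\ge\mathcal J_\star$ together with attainment being automatic once the two values agree.

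The core inequality $\mathcal I^\star\ge\mathcal J_\star$ is proved in three reductions. Step 1 (truncation of $h$): replace $h$ by $h\wedge M$ for $M\in\bN$; since $h\in L^1(Q)$ is upper semicontinuous, the value $\mathcal I^\star$ and the dual value are monotone in $M$ and converge to the untruncated values by monotone/dominated convergence, so it suffices to treat bounded-above $h$ — in fact I would further translate so $h\le 0$. Step 2 (continuous cost approximation): use Assumption~\ref{assumption:cost:functions}(i) to replace each $c_k$ by the continuous $c_{k,m}\nearrow c_k$; enlarging the feasible set $\Pi^h(Q,\bm\epsilon)$ and applying monotone convergence to $\int c_{k,m}\,d\pi$ shows the primal value for the $c_{k,m}$-constraints decreases to $\mathcal I^\star$, while the dual value is handled by noting $\varphi_{\bm\lambda}$ built from $c_{k,m}$ dominates that built from $c_k$. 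Step 3 (compactification): for fixed $m$ and bounded-above $h$, restrict attention to the compact set $\Xi_{\rm cmp}$ of Assumption~\ref{assumption:cost:functions}(ii) — or an exhausting sequence of compacta — so that $\Pi_{{\rm fin},\bm c}(Q)$ becomes a weak-$*$ compact, convex subset of the probability measures on a compact product space; here the abstract linear-programming duality sketched before the statement (the adjoint/Koopman pair $\mathcal A={\rm pr}_{1\#}$, $\mathcal A^*=\mathcal K_{{\rm pr}_1}$) applies verbatim to the continuous integrand $h\circ{\rm pr}_2-\sum_k\lambda_k c_{k,m}$, yielding strong duality with a \emph{continuous} optimal $\varphi$ and hence, after minimizing over $\bm\lambda\succeq 0$ via Sion's minimax theorem on the compact–convex $\Pi$ versus the convex $\Rat{n}_{\ge0}$, equality of primal and dual for the compactified-and-smoothed problem.

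The role of Assumption~\ref{assumption:cost:functions}(ii) — linear independence of $c_{1,m},\ldots,c_{n,m}$ in $C(\Xi_{\rm cmp}\times\Xi_{\rm cmp})$ and $\text{span}\{c_{1,m},\ldots,c_{n,m}\}\cap C_{2,{\rm const}}=\{0\}$ — is to guarantee that the constraint map $\pi\mapsto(\int c_{1,m}\,d\pi,\ldots,\int c_{n,m}\,d\pi)$ is genuinely $n$-dimensional and admits a Slater-type interior point relative to $\Pi(Q)$ (the plan $(\mathrm{id},\mathrm{id})_\#Q$ gives $\int c_{k,m}\,d\pi=0<\epsilon_k$), so that no hidden equality among the constraints forces a duality gap when passing the limit $m\to\infty$; I would verify this condition is automatically met when $\Xi$ has the product structure and $c_k(\zeta,\xi)=\rho_k(\zeta_k,\xi_k)^p$, as claimed after Assumption~\ref{assumption:cost:functions}. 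Finally, recombining the three limits $M\to\infty$, $m\to\infty$, and compactum-exhaustion — in that order, using that each step is a monotone limit so the iterated limits may be interchanged — delivers $\mathcal I^\star\ge\mathcal J_\star$, hence \eqref{eq:strong duality}, and the attainment claim follows because the value of the infimum in \eqref{J:star} is realized by $\varphi_{\bm\lambda}$ at the optimal $\bm\lambda$ by the Step-0 observation. \textbf{The main obstacle} I anticipate is Step 3 together with the limit interchange: ensuring that the optimal $\varphi$ produced on each compactum, which is only continuous there, extends (or passes to the limit) to a universally measurable function on all of $\Xi$ that still dominates $h\circ{\rm pr}_2-\sum_k\lambda_k c_k$ \emph{everywhere}, and that the dual values do not jump in the limit — this is exactly where the measurable-selection/universal-measurability machinery and the lower-semicontinuity of $\pi\mapsto\int c_k\,d\pi$ must be deployed carefully, mirroring the delicate part of \cite{JB-KM:19}.
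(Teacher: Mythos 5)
Your high-level architecture — approximate the costs by continuous $c_{k,m}\nearrow c_k$, prove duality on compacta via LP/Fenchel duality, then take limits — does track the paper's strategy, but several of the claims you make to glue the pieces together are genuinely wrong or unjustified, and one misreads the role of Assumption~\ref{assumption:cost:functions}(ii).

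First, the attainment claim. You assert that attainment of the infimum in \eqref{J:star} is ``automatic once the two values agree.'' It is not: equality $\mathcal I^\star=\mathcal J_\star$ says nothing about whether the infimum over $\bm\lambda\succeq 0$ is achieved. The paper closes this by showing the map $\bm\lambda\mapsto\mathcal J(\bm\lambda,\varphi_{\bm\lambda})$ is lower semicontinuous and satisfies $\mathcal J(\bm\lambda,\varphi_{\bm\lambda})\ge\langle\bm\lambda,\bm\epsilon\rangle+\int_\Xi h\,dQ$, hence is radially unbounded on $\Rat{n}_{\ge0}$, so its level sets are compact and the infimum is attained. You need an argument of this kind; it does not come for free.

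Second, the limit interchange. You introduce an extra truncation $h\wedge M$ (the paper does not) and then claim the three iterated limits ``may be interchanged'' because each is monotone. Monotonicity of each one-parameter limit does not license interchanging them — you would need a uniformity (Moore--Osgood) condition, which you neither state nor verify. More importantly, the passage ``primal value for the $c_{k,m}$-constraints decreases to $\mathcal I^\star$'' is exactly the hard part: the feasible sets $\Pi(Q,\bm\epsilon;c_{1,m},\ldots,c_{n,m})$ decrease to $\Pi(Q,\bm\epsilon;c_1,\ldots,c_n)$, but $\sup$ does not commute with a decreasing intersection of feasible sets in general. The paper handles this by extracting a weakly convergent subsequence of primal optimizers $\pi_m^\star$ (tightness on the compact $\Xi$), showing the weak limit is feasible for the limiting constraints, and relating its value to $\liminf \mathcal J_{m,\star}$; your sketch elides this entirely.

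Third, the role of Assumption~\ref{assumption:cost:functions}(ii). You say it ensures a Slater-type interior point. In fact the Slater point $(\mathrm{id},\mathrm{id})_\#Q$ exists purely because $c_k(\zeta,\zeta)=0$ and $\epsilon_k>0$, with no reference to linear independence of the costs. The actual role of Assumption~\ref{assumption:cost:functions}(ii) (as in the paper's Lemma on the sets $C$ and $D$) is to guarantee that the decomposition $g=\varphi\circ{\rm pr}_1+\sum_k\lambda_k c_k$ of an element of $C$ is unique, so that the functional $\Phi(g)=\langle\bm\lambda,\bm\epsilon\rangle+\int\varphi\,dQ$ used in the Fenchel--Rockafellar step is well defined. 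Without this, the conjugate functional computation collapses. Related to this: you compactify on ``$\Xi_{\rm cmp}$ of Assumption~\ref{assumption:cost:functions}(ii), or an exhausting sequence of compacta,'' but $\Xi_{\rm cmp}$ is a single fixed set used only to state the linear-independence hypothesis; the exhaustion in the paper is of the support $\Xi_\pi\times\Xi_\pi$ of a fixed feasible plan $\pi$, with constraint budgets $\epsilon_k^m=\epsilon_k(1-1/m)$ adjusted on each compactum — a choice your sketch does not reproduce and which matters for the bound $\mathcal I^\star\ge p_m\mathcal J_{m,\star}-1/m$. Finally, a minimax step is needed (the paper uses Fan; Sion could plausibly substitute), but it is applied to $T(\bm\lambda,\pi)$ over $[0,\lambda_{\max}]^n\times\Pi_{{\rm fin},\bm c,h}(Q)$ followed by the integration/majorization interchange, not to the compactified-and-smoothed problem as you propose; this ordering is what lets the $\sup_\xi$ migrate inside the $Q$-integral.
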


Using this result we obtain the following explicit reformulation of the DRO problem when $\Xi$ has a product structure and the ambiguity set is a data-driven multi-transport hyperrectangle.

\begin{corollary}
\label{cor:dual}  
\longthmtitle{Dual of data-driven multi-transport hyperrectangles}
If $\Xi$ satisfies Assumption~\ref{assumption:independence}(i), then the optimal value of \eqref{inner:maximization} with $\mathcal T(Q,\bm\epsilon)\equiv\mathcal T_p(\bm P_\xi^N,\bm\eps)$  is equal to  
\begin{align}
 \mathcal I^\star=\inf_{\bm\lambda\succeq0}\Big\{\langle\bm\lambda,\bm\epsilon\rangle+\frac{1}{N^n}\sum_{(i_1,\ldots,i_n)\in[N]^n}\sup_{\xi\in\Xi}\Big\{h(\xi)-\sum_{k=1}^n \lambda_k \rho_k(\xi_{k}^{i_k},\xi_k)^p\Big\}\Big\}. 
 \label{eq:duality2}
 \end{align}
\end{corollary}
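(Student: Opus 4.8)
The plan is to specialize Theorem~\ref{thm:strong:duality} to the product setting of Assumption~\ref{assumption:independence}(i), with reference distribution $Q\equiv\bm P_\xi^N$ and cost functions $c_k(\zeta,\xi)=\rho_k(\zeta_k,\xi_k)^p$. First I would check that this choice of cost functions satisfies Assumption~\ref{assumption:cost:functions}: for (i), since each $\rho_k(\cdot,\cdot)^p$ is already continuous, we can simply take the constant sequence $c_{k,m}:=c_k$ (which is nondecreasing and converges trivially), each vanishing on the diagonal; for (ii), on any compact $\Xi_{\rm cmp}\subset\Xi$ one has to argue that the functions $(\zeta,\xi)\mapsto\rho_k(\zeta_k,\xi_k)^p$, $k\in[n]$, are linearly independent in $C(\Xi_{\rm cmp}\times\Xi_{\rm cmp})$ and that their span meets $C_{{\rm const},2}(\Xi_{\rm cmp}\times\Xi_{\rm cmp})$ only at $0$ --- this is because each $c_k$ genuinely depends on the second argument through the $k$th component (for a suitably chosen $\Xi_{\rm cmp}$ large enough to contain points differing in each component), whereas a nonzero element of $C_{{\rm const},2}$ does not depend on the second argument at all, and distinct $c_k$'s depend on distinct components. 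The paper in fact already asserts after Assumption~\ref{assumption:cost:functions} that this product/power-of-distance case is covered, so I would invoke that remark rather than belabor the verification.

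Granting the hypotheses, Theorem~\ref{thm:strong:duality} directly gives
\[
\mathcal I^\star=\inf_{\bm\lambda\succeq0}\Big\{\langle\bm\lambda,\bm\epsilon\rangle+\int_{\Xi}\sup_{\xi\in\Xi}\{h(\xi)-\langle\bm\lambda,\bm c(\zeta,\xi)\rangle\}\,d\bm P_\xi^N(\zeta)\Big\},
\]
where now $\langle\bm\lambda,\bm c(\zeta,\xi)\rangle=\sum_{k=1}^n\lambda_k\rho_k(\zeta_k,\xi_k)^p$ and $\bm\epsilon\equiv\bm\eps^p$ by the convention \eqref{transport:hyperrectangle:Tp}, so $\langle\bm\lambda,\bm\epsilon\rangle=\sum_k\lambda_k\eps_k^p$ --- matching $\langle\bm\lambda,\bm\epsilon\rangle$ in the statement if one reads $\bm\epsilon$ there as $\bm\eps^p$. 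The only remaining step is to evaluate the integral against the product empirical distribution. Since
\[
\bm P_\xi^N=P_{\xi_1}^N\otimes\cdots\otimes P_{\xi_n}^N=\frac{1}{N^n}\sum_{(i_1,\ldots,i_n)\in[N]^n}\delta_{(\xi_1^{i_1},\ldots,\xi_n^{i_n})}
\]
(as recorded just before Figure~\ref{fig:virtual:samples}), integrating the universally measurable function $\zeta\mapsto\sup_{\xi\in\Xi}\{h(\xi)-\sum_k\lambda_k\rho_k(\zeta_k,\xi_k)^p\}$ against this discrete measure replaces the integral by the average $\frac{1}{N^n}\sum_{(i_1,\ldots,i_n)\in[N]^n}(\cdot)$ evaluated at $\zeta=(\xi_1^{i_1},\ldots,\xi_n^{i_n})$, which yields exactly \eqref{eq:duality2}.

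The conceptual content is entirely in Theorem~\ref{thm:strong:duality}; the corollary is a routine specialization, so there is no real obstacle. The one point needing a small amount of care is bookkeeping: making sure the transport budget convention $\mathcal T_p(Q,\bm\eps)=\mathcal T(Q,\bm\eps^p)$ is applied consistently, so that the $\epsilon_k$ appearing in the Lagrangian \eqref{Lagrangian} and hence in \eqref{eq:strong duality} is $\eps_k^p$, and that the cost vector $\bm c$ has entries $\rho_k(\zeta_k,\xi_k)^p$. A second, purely technical point is that $\bm P_\xi^N\in\mathcal P(\Xi)$ indeed lies in the relevant class (it is a finitely supported measure on the Polish product space, so $h\in L^1(\bm P_\xi^N)$ follows from $h$ being real-valued and the support being finite, giving Assumption~\ref{assumption:h} for free on this reference measure), which I would note in one line before applying the theorem.
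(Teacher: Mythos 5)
Your proof is correct and takes the only natural route, which is exactly what the paper intends: the corollary is stated without proof as a direct specialization of Theorem~\ref{thm:strong:duality} to $Q\equiv\bm P_\xi^N$, $c_k(\zeta,\xi)=\rho_k(\zeta_k,\xi_k)^p$, $\bm\epsilon\equiv\bm\eps^p$, followed by evaluating the resulting integral against the $N^n$-point product empirical distribution. The bookkeeping points you flag (the $\bm\eps^p$ convention from \eqref{transport:hyperrectangle:Tp}, verification of Assumption~\ref{assumption:cost:functions} via the remark after that assumption, and the discrete form of $\bm P_\xi^N$) are exactly the right ones to check, and your handling of them is accurate.
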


\begin{rem}
\longthmtitle{Strict improvement of Wasserstein hyperrectangle optimal value}
Despite the fact that multi-transport hyperrectangles admit dual reformulations over a much broader class of objective functions compared to Wasserstein hyperrectangles, the latter can exhibit a strict improvement of their optimal values compared to the former. This is justified by the containment result of Proposition \ref{prop:containment} and is illustrated in the following toy example. 

Consider the product reference distribution 
\begin{align*}
Q=Q_1\otimes Q_2:= \; & (p_1\delta_0+(1-p_1)\delta_1)\otimes(p_2\delta_0+(1-p_2)\delta_1) \\
= \; & p_1p_2\delta_{(0,0)}+(1-p_1)p_2\delta_{(1,0)}+p_1(1-p_2)\delta_{(0,1)}+(1-p_1)(1-p_2)\delta_{(1,1)}
\end{align*}
on $\Rat{2}$, the objective function
\begin{align*}
h(\xi):=\mathds 1_{\{(0,0)\}}(\xi)\equiv \mathds 1_{\{0\}}(\xi_1)\mathds 1_{\{0\}}(\xi_2),
\end{align*} 
and ambiguity radii $\eps_1\le 1-p_1$ and $\eps_2\le 1-p_2$. 
For each of the ambiguity sets $\mathcal H_p(Q,\bm\eps)$ and $\mathcal T_p(Q,\bm\eps)$, the distribution that maximizes the value of $h$ is the one obtained when the largest possible amount of mass is transferred from the reference distribution to the point $(0,0)$. 

For the Wasserstein hyperrectangle, this distribution is obtained through the transport plans $\pi_1$ and $\pi_2$, which move the largest possible amount of mass from $1$ to $0$ using the transport budgets $\eps_1$ and $\eps_2$, respectively. Identifying these transport plans with their restrictions to $\{0,1\}^2\subset\Rat{2}$ where they are supported, we obtain their  matrix representations 
\begin{align*}
\pi_1\equiv \underbrace{\left(\begin{matrix}
p_1 &  \eps_1 \\
0 & 1-p_1-\eps_1 
\end{matrix}\right)}_{Q_1}
\bigg\}
{\scriptsize 
\begin{matrix}
P_1
\end{matrix}}
\qquad{\rm and}\qquad 
\pi_2\equiv
\underbrace{\left(\begin{matrix}
p_2 & \eps_2  \\
0 & 1-p_2-\eps_2
\end{matrix}\right)}_{Q_2}
\bigg\}
{\scriptsize 
\begin{matrix}
P_2
\end{matrix}}.
\end{align*}
The column sums of these transport plan matrices correspond to the reference distributions $Q_1$ and $Q_2$ and their row sums to the other marginals $P_1:=(p_1+\eps_1)\delta_0+(1-p_1-\eps_1)\delta_1$ and $P_2:=(p_2+\eps_2)\delta_0+(1-p_2-\eps_1)\delta_1$ of the transport plans. The distribution from $\mathcal H_p(Q,\bm\eps)$ that maximizes $h$ is $P:=P_1\otimes P_2$, which is also the second marginal of the transport plan  $\pi=T_{\#}(\pi_1\otimes\pi_2)$ (its first marginal is $Q$), where $T(\zeta_1,\xi_1,\zeta_2,\xi_2):=(\zeta_1,\zeta_2,\xi_1,\xi_2)$  (see proof of Proposition~\ref{prop:containment}). As above, we identify the transport plan  $\pi$ with its restriction to $\{(0,0),(1,0),(0,1),(1,1)\}^2\subset\Rat{4}$ where it is supported. Using the lexicographical ordering
\begin{align*}
(0,0)\quad (1,0)\quad (0,1)\quad (1,1)\quad \mapsto\quad 1\quad 2\quad 3\quad 4 
\end{align*}
and the product expression $\pi(\zeta,\xi)=\pi_1(\zeta_1,\xi_1)\pi_2(\zeta_2,\xi_2)$ of  $\pi$, we get its matrix representation  
\begin{align*}
\pi\equiv
\underbrace{\left(\begin{matrix}
p_1p_2 & \eps_1p_2 & p_1\eps_2 & \eps_1\eps_2 \\
0 & (1-p_1-\eps_1)p_2 & 0 & (1-p_1-\eps_1)\eps_2 \\
0 & 0 & p_1(1-p_2-\eps_2) & \eps_1(1-p_2-\eps_2) \\
0 & 0 & 0 & (1-p_1-\eps_1)(1-p_2-\eps_2)
\end{matrix}\right)}_{Q}
\left.\begin{matrix}
\\
\\
\\
\\
\end{matrix}\right\rbrace
{\scriptsize 
\begin{matrix}
P
\end{matrix}}.
\end{align*}

For the multi-transport hyperrectangle, it is not hard to check that the distribution that maximizes the value of $h$ is obtained through the transport plan  $\pi'$, which uses the transport budgets $\eps_1$ and $\eps_2$ to move the largest possible amounts of mass from $(1,0)$ to $(0,0)$ and from $(0,1)$ to $(0,0)$, respectively. Thus, its corresponding matrix representation is 
\begin{align*}
\pi'\equiv
\underbrace{\left(\begin{matrix}
p_1p_2 & \eps_1 & \eps_2 & 0 \\
0 & (1-p_1)p_2-\eps_1 & 0 & 0 \\
0 & 0 & p_1(1-p_2)-\eps_2 & 0 \\
0 & 0 & 0 & (1-p_1)(1-p_2)
\end{matrix}\right)}_{Q}
\left.\begin{matrix}
\\
\\
\\
\\
\end{matrix}\right\rbrace
{\scriptsize 
\begin{matrix}
P'
\end{matrix}}.
\end{align*}

Assuming without loss of generality that $p_1<1$ and taking into account that  
\begin{align*}
\eps_1\le 1-p_1\quad{\rm and}\quad\eps_2\le 1-p_2, 
\end{align*}
it follows that the mass transported to $(0,0)$ with the Wasserstein hyperrectangle is strictly less than that with the multi-transport hyperrectangle, namely, 
\begin{align*}
\eps_1p_2+p_1\eps_2+\eps_1\eps_2=\eps_1p_2+\eps_2(p_1+\eps_1)\le\eps_1p_2+\eps_2(p_1+1-p_1)=\eps_1p_2+\eps_2<\eps_1+\eps_2. 
\end{align*}
Therefore, we get that  
\begin{align*}
\bE_P[h(\xi)]=p_1p_2+\eps_1p_2+p_1\eps_2+\eps_1\eps_2<p_1p_2+\eps_1+\eps_2=\bE_{P'}[h(\xi)],
\end{align*}
i.e., that the optimal value over the Wasserstein hyperrectangle is strictly below that of the multi-transport hyperrectangle (cf. Figure~\ref{fig:rectangles:DRO:comparison}).    

Since the distribution from $\mathcal T_p(Q,\bm\eps)$
with the largest amount of mass at $(0,0)$ has more mass at that point than any distribution from $\mathcal H_p(Q,\bm\eps)$, it follows that this distribution cannot belong to the convex hull of $\mathcal H_p(Q,\bm\eps)$. This is also why we depict the multi-transport hyperrectangle as a curved rectangle, which is strictly convex, instead of drawing a straight rectangle that would look like the convex hull of the Wasserstein hyperrectangle.  
\end{rem}

\begin{figure}[h]
	\centering 
	\includegraphics[width=.7\linewidth]{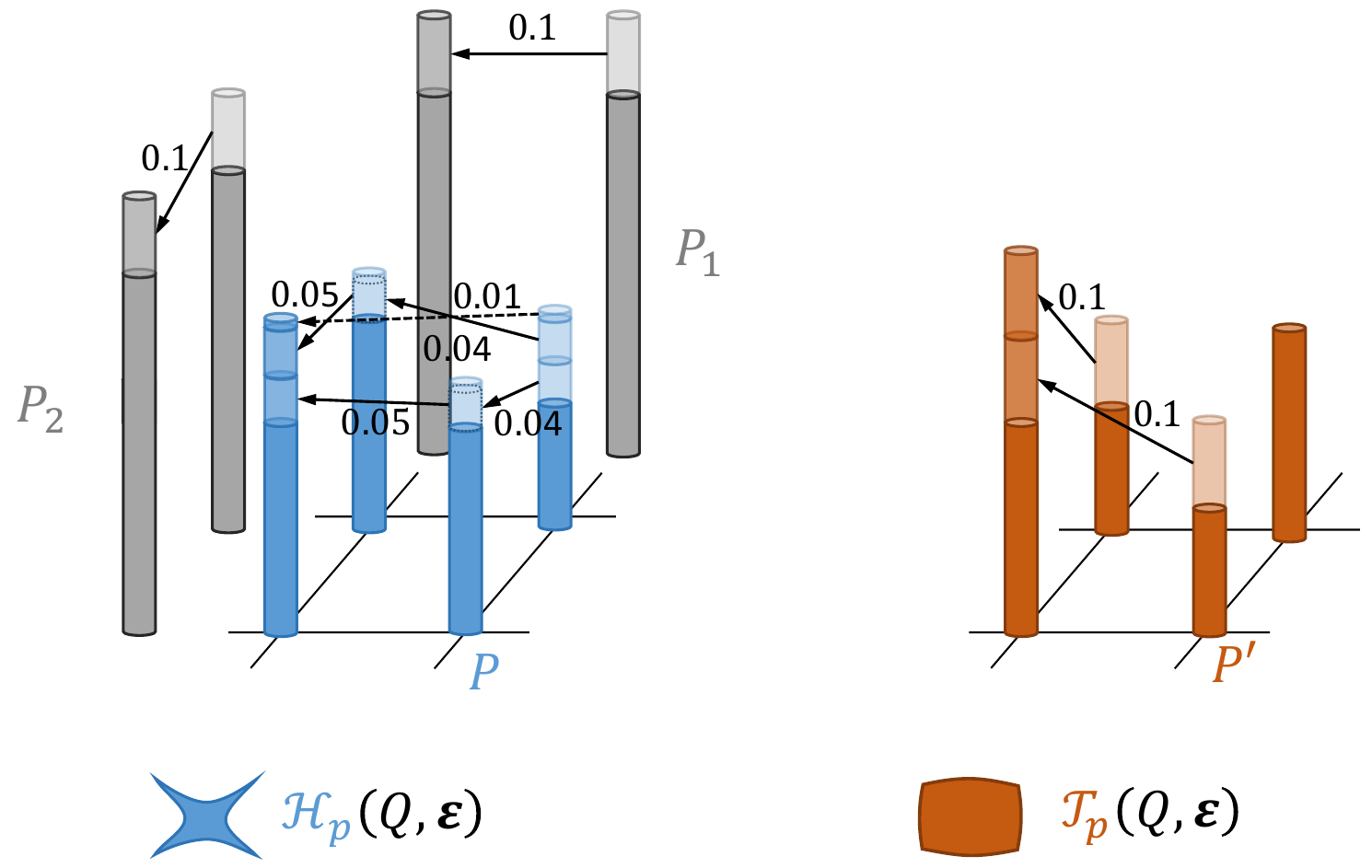} \\ 
	\caption{The figure illustrates the optimal transport plans corresponding to both ambiguity sets for the concrete values $p_1=p_2=0.5$ and $\eps_1=\eps_1=0.1$. Clearly, the mass transferred to $(0,0)$ to form the distribution $P$ that maximizes $h$ in the case of the Wasserstein hyperrectangle is considerably smaller than that used to form $P'$ in the case of the multi-transport hyperrectangle. This happens because $P$ needs to retain a product distribution structure, resulting in a redundant effort to transport mass from $(1,1)$ to all points $(0,1)$, $(1,0)$, and $(0,0)$, which is not required  to form $P'$.} \label{fig:rectangles:DRO:comparison}
\end{figure}

\section{Simulation example}
\label{sec:example}

In this section, we present an example where the duality results of the paper are used to solve the locational optimization problem of optimally placing a drone recharging station on a planar region. We consider two drones that are independently deployed in a large area that we identify with $\Rat{2}$ and can stop for recharging only when they are located in a subset $\Theta$ of this area. We assume that this subset is the unbounded region  $ \Theta:=\{\xi\in\Rat{2}: \xi\succeq 0 \}$ and that the recharging station can only be placed on   $\mathcal X:=\{x\in\Rat{2}:(0,0)\preceq x\preceq (5,5)\}$.  

The positions $\xi_1$ and $\xi_2$ of the drones are assumed random and independently distributed and we seek to minimize the expected sum of their quadratic distance from the recharging station when they are located in $\Theta$. This leads to the optimization problem  
\begin{align*}
    \inf_{x\in \mathcal X}\mathbb E_{P_\xi}[\mathds{1}_{\Theta\times\Theta}(\xi) ( \|x-\xi_1\|_2^2+ \|x-\xi_2\|_2^2)],
\end{align*}
where $\xi:=(\xi_1,\xi_2)$. The probability distribution  $P_\xi\in\P_2(\Rat{4})$ of the drones' positions is unknown and we only assume access to $N$ i.i.d. historic samples of it. Using these samples we build a data-driven ambiguity set $\mathcal P^N$ for $P_\xi$ and compare the solutions of the DRO problem 
\begin{align}
    \inf_{x\in \mathcal X} \sup_{P\in\mathcal P^N} \mathbb E_{P}[\mathds{1}_{\Theta\times\Theta}(\xi) (  \|x-\xi_1\|_2^2+ \|x-\xi_2\|_2^2)] 
    \label{eq:simEx}
\end{align}
when $\mathcal P^N$ is a multi-transport hyperrectangle and a Wasserstein ball. 
Using Corollary~\ref{cor:dual}, we can reformulate this problem with $\mathcal P^N\equiv\mathcal T_2(\bm P_\xi^N,\bm\eps)$ as   
\begin{align}
\left\{\begin{aligned}
\inf_{\substack{x\in\mathcal{X},\bm\lambda\succ\bm1 \\ \bm s\succeq0, \bm\nu\succeq0}}  & \langle \bm\lambda, \bm\epsilon \rangle + \frac{1}{N^2} \sum_{\bm i\in[N]^2} s_{\bm i} \\
 \text{s.t} & \sum_{k=1}^2\frac{\|r_k^{i_k}- \nu_k^{i_k}\|^2}{4(\lambda_k-1)}\le s_{\bm i}- \sum_{k=1}^2 (\| x\|^2-\lambda_k\|\xi_k^{i_k}\|^2),\quad \bm i\in[N]^2, 
 \end{aligned}\right.  \label{eq:opt:problem:ex:2}
\end{align}
where $\bm\epsilon:=(\eps_1^2,\eps_2^2)$, $\bm1:=(1,1)$, $\bm i:=(i_1,i_2)$, $\bm\nu:=\{(\nu_1^i,\nu_2^i)\}_{i\in[N]}$, and $\bm s:=\{\bm s_{\bm i}\}_{\bm i\in[N]^2}\in\Rat{N^2}$, with $\{\bm s_{\bm i}\}_{\bm i\in[N]^2}$ viewed as an element of $\Rat{N^2}$ for some ordering of $[N]^2$. Analogously, when $\mathcal P^N\equiv\mathcal{B}_2(P_\xi^N,\eps)$ we can reformulate the problem as  
\begin{align} 
\left\{\begin{aligned}\inf_{\substack{x\in\mathcal{X},\lambda>1 \\ \bm s\succeq0, \bm\nu\succeq 0}} & \lambda\eps^2 + \frac{1}{N} \sum_{i\in[N]} s_{i} \\
\text{s.t} & \sum_{k=1}^2\frac{\|r_k^i- \nu_k^i\|^2}{4(\lambda-1)}\le s_i- \sum_{k=1}^2 (\| x\|^2-\lambda\|\xi_k^i\|^2),\quad i\in[N],\end{aligned}\right. \label{eq:opt:problem:ex:3}
\end{align}
with $\bm s:=\{ s_i\}_{i\in[N]}\in\Rat{N}$ and $\bm\nu$ as above. The derivation of \eqref{eq:opt:problem:ex:2} and \eqref{eq:opt:problem:ex:3} is given in Appendix~\ref{appendix:simEx:derivations} where we also discuss how to reduce the complexity of problem~\eqref{eq:opt:problem:ex:2} by removing certain redundant constraints.

\begin{figure}
    \centering
    \includegraphics[width=.8\linewidth]{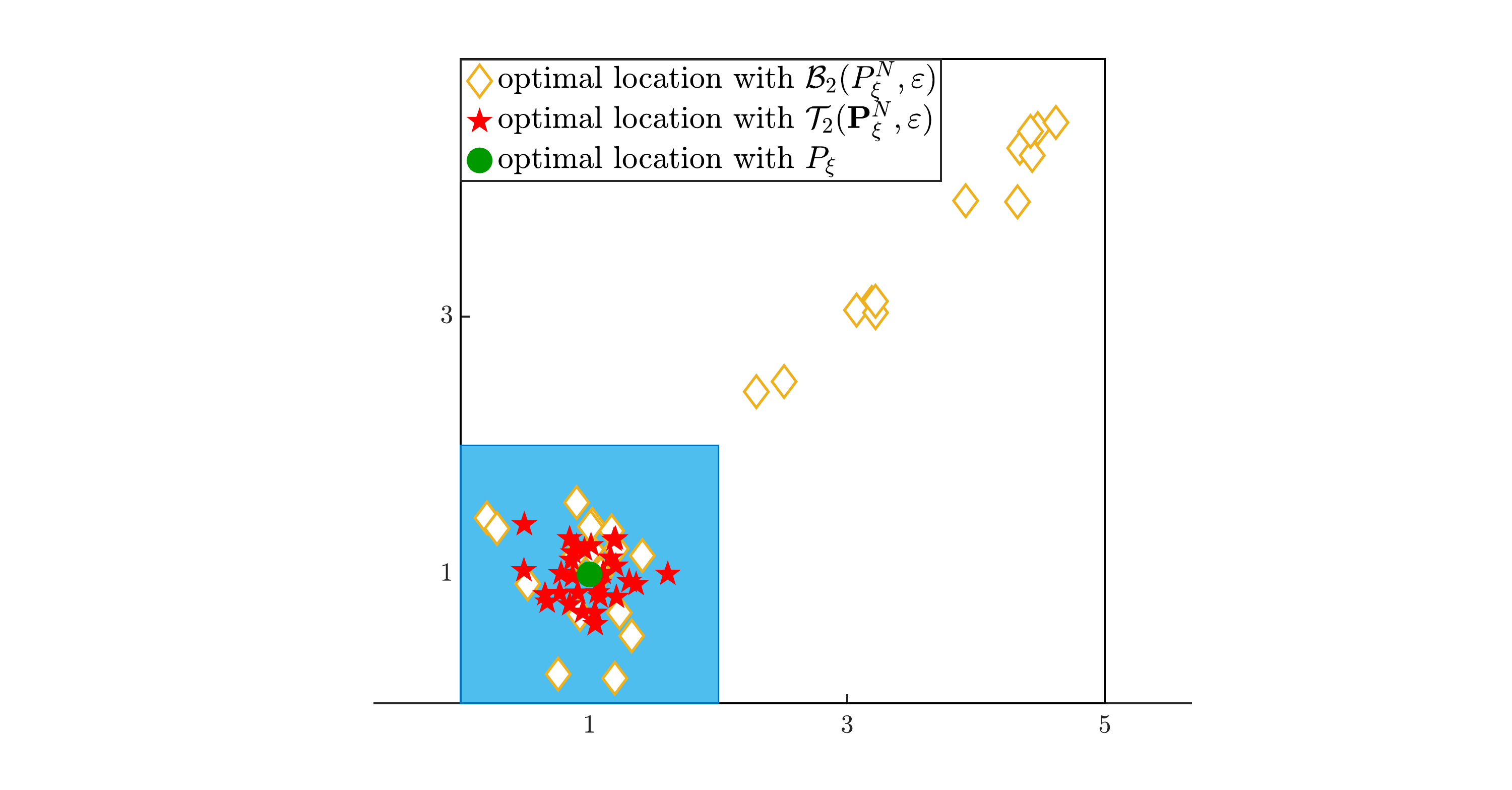}
    \caption{The figure shows the optimal recharging locations  across 30 realizations of the simulations. The blue rectangle represents the part of the recharging area where the drones may be located with nonzero probability and the green circle depicts their true optimal recharging location. The red stars and the yellow diamonds depict the optimal recharging locations that we obtained by solving the DRO problem using the multi-transport hyperrectangle and the  Wasserstein ball, respectively. It is evident that the locations obtained using the multi-transport hyperrectangle are on average much closer to the true optimal location.}
    \label{fig:RechargingLocations}
\end{figure}

\begin{figure}
    \centering
    \includegraphics[width=.9\linewidth]{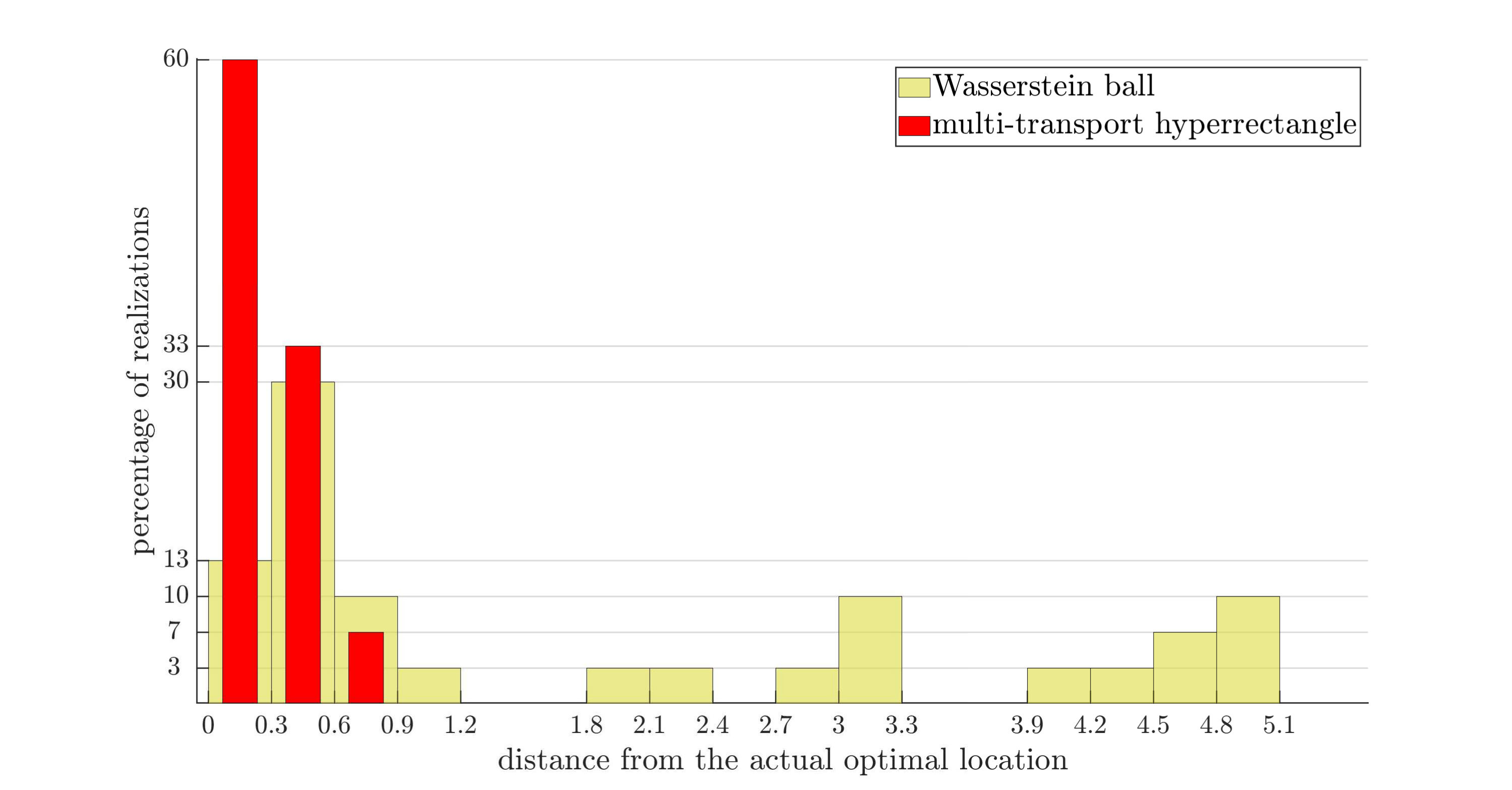}
    \caption{The histogram shows the relative frequency distribution of the distance between the true optimal location and the optimal location obtained by the DRO problem using  both ambiguity sets. The yellow bars represent the percentage of the optimal DRO locations whose discrepancies lie in the designated range when using the Wasserstein ball. The red bars represent the corresponding percentages when using the multi-transport hyperrectangle and are considerably closer to zero.}
    \label{fig:DistFromYellowPoint}
\end{figure}

For the simulations, we selected the probability distributions  $P_{\xi_1}=P_{\xi_2}:=0.1\mathcal U_{\Theta_1}+0.9\mathcal U_{\Theta_2}$ for the positions of the drones, where $\Theta_1:=\{\xi\in\Rat{2}:(0,0)\preceq \xi\preceq (2,2)\}$, $\Theta_2:=\{\xi\in\Rat{2}:(-20,-22)\preceq \xi\preceq (0,0)\}$, and  $\mathcal U$ denotes the uniform distribution on the designated set. The transport budgets for the ambiguity sets are taken as $\bm\eps=(0.01,0.01)$ and $\eps=\|\bm\eps\|$, respectively, while both sets are built using $N=50$ i.i.d. samples from $P_\xi=P_{\xi_1}\otimes P_{\xi_2}$. 

The results indicate that accounting for the product structure of the true distribution highly improves the performance of the DRO solution, even when the ambiguity sets $\mathcal{T}_2(\bm P_\xi^N,\bm\eps)$ and $\mathcal{B}_2(P_\xi^N,\eps)$ have a comparable size. Figure~\ref{fig:RechargingLocations} shows the optimal recharging locations for both ambiguity sets across 30 realizations of the simulation  while Figure~\ref{fig:DistFromYellowPoint} depicts the distribution of the distances between the true optimal location and the ones computed using the ambiguity sets across these realizations. Clearly, the multi-transport hyperrectangle exhibits superior performance compared to the Wasserstein ball. This can be certified by the fact that $60$\% of the locations obtained by solving the problem with the multi-transport hyperrectangle have a distance at most  $0.3$ from the optimal location, while this happens for only a $13$\% of the locations obtained with the Wasserstein ball. In addition, the spread of this discrepancy for the solutions obtained with the Wasserstein ball is considerably larger.

\section{Conclusion}

In this paper, we introduced two classes of structured ambiguity sets, termed Wasserstein hyperrectangles and multi-transport hyperrectangles. In data-driven scenarios where the components of the uncertainty are statistically independent, both ambiguity sets can be tuned to contain the true distribution with prescribed confidence while exhibiting considerably faster shrinkage with the number of samples compared to monolithic ambiguity balls. We established strong duality results for DRO problems over both ambiguity sets and clarified the tradeoff between the scope of the problems that can be effectively solved for each set and the potential conservativeness of the solutions to these problems. Our numerical results certify how structured ambiguity sets can capture the uncertainty in a more effective manner than monolithic ambiguity balls and improve the task of distributionally robust decision-making.  

Future work includes the development of tractable reformulations for specific classes of distributionally robust optimization and chance-constrained problems.  We also seek to address the computational complexity of data-driven problems where the product empirical distribution is supported on a prohibitive amount of points and extend our statistical analysis for structured dependencies across the components of the uncertainty.     

\appendix 

\section{Proofs from Sections~\ref{sec:hyperrectangles} and \ref{sec:statistical:guarantees}}

\subsection{Proofs from Section~\ref{sec:hyperrectangles}}  
\label{appendix:to:sec:hyperrectangles}

The following lemma is used to prove Lemma~\ref{lemma independence}.
\begin{lemma} \longthmtitle{Independent $\sigma$-algebras \cite[Theorem 2.26]{AK:13}} Let $K$ be an arbitrary set and $I_k \;, k \in K$, arbitrary mutually disjoint index sets. Define $I =\cup_{k\in K} I_k$ . If the family $\{X_i\}_{i\in I}$ is independent, then the family of $\sigma$-algebras $\{\sigma(X_j,j\in I_k)\}_{k\in K}$ is independent.
\label{lemma:independence:groups1} 
\end{lemma}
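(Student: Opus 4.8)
The plan is to reduce the assertion to the finitary definition of independence and then apply Dynkin's $\pi$-$\lambda$ theorem, thereby reproducing the standard argument of \cite[Theorem 2.26]{AK:13}. Since a family of sub-$\sigma$-algebras is independent if and only if each of its finite subfamilies is, it suffices to fix distinct indices $k_1,\ldots,k_m\in K$ and establish the product rule $\mathbb P\big(\bigcap_{l=1}^m A_l\big)=\prod_{l=1}^m\mathbb P(A_l)$ for arbitrary $A_l\in\sigma(X_j,j\in I_{k_l})$, $l\in[m]$.

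First I would introduce, for each $l\in[m]$, the class $\mathcal E_l$ of events of the form $\bigcap_{j\in F}X_j^{-1}(B_j)$ with $F\subset I_{k_l}$ finite and the $B_j$ Borel, the choice $F=\emptyset$ yielding the whole sample space. Two routine checks are that $\mathcal E_l$ is a $\pi$-system, since the intersection of two such events is again of this form after merging the underlying finite index sets, and that $\sigma(\mathcal E_l)=\sigma(X_j,j\in I_{k_l})$, since $\sigma(X_j,j\in I_{k_l})$ is by definition generated by the cylinders with $|F|=1$. The core step is to show that $\mathcal E_1,\ldots,\mathcal E_m$ are independent: for $E_l=\bigcap_{j\in F_l}X_j^{-1}(B_j)\in\mathcal E_l$, the finite sets $F_l\subset I_{k_l}$ are pairwise disjoint because the $I_k$ are, so $\bigcap_{l=1}^m E_l=\bigcap_{j\in F_1\cup\cdots\cup F_m}X_j^{-1}(B_j)$; invoking the hypothesized independence of the finite subfamily $\{X_j:j\in F_1\cup\cdots\cup F_m\}$ of $\{X_i\}_{i\in I}$ and regrouping the resulting product along the blocks $F_1,\ldots,F_m$ gives $\mathbb P\big(\bigcap_l E_l\big)=\prod_l\prod_{j\in F_l}\mathbb P(X_j^{-1}(B_j))=\prod_l\mathbb P(E_l)$, the last equality again by independence within each block.

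Next I would upgrade this to independence of the generated $\sigma$-algebras by the standard Dynkin argument carried out one system at a time: fixing $E_2\in\mathcal E_2,\ldots,E_m\in\mathcal E_m$, the collection of measurable sets $A$ with $\mathbb P(A\cap E_2\cap\cdots\cap E_m)=\mathbb P(A)\mathbb P(E_2)\cdots\mathbb P(E_m)$ is a $\lambda$-system containing the $\pi$-system $\mathcal E_1$, hence contains $\sigma(\mathcal E_1)$; repeating this replacement successively for $\mathcal E_2,\ldots,\mathcal E_m$ yields the product rule for $\sigma(\mathcal E_1),\ldots,\sigma(\mathcal E_m)$, that is, for $\sigma(X_j,j\in I_{k_1}),\ldots,\sigma(X_j,j\in I_{k_m})$, which is the claim.

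I do not anticipate a genuine obstacle: the whole argument is measure-theoretic bookkeeping resting on the standard $\pi$-$\lambda$ machinery. The only points deserving care are confirming that $\mathcal E_l$ actually generates $\sigma(X_j,j\in I_{k_l})$ and that the disjointness of the $I_k$ is what legitimately converts global independence of $\{X_i\}_{i\in I}$ into block independence of the cylinder events $E_l$; once these are in place the conclusion follows mechanically.
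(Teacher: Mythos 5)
The paper states this lemma without proof, citing it directly from \cite[Theorem 2.26]{AK:13}, so there is no in-paper argument to compare against. Your proof is correct and coincides with the standard argument the cited reference itself uses: reduce to finite subfamilies, check that the finite cylinder classes $\mathcal E_l$ are $\pi$-systems (including $\Omega$ via $F=\emptyset$) generating the block $\sigma$-algebras, deduce independence of the $\mathcal E_l$ from disjointness of the $I_k$ together with global independence of $\{X_i\}$, and promote to the generated $\sigma$-algebras by a one-system-at-a-time application of Dynkin's $\pi$-$\lambda$ theorem.
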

\begin{proof}[Proof of Lemma \ref{lemma independence}]
Consider the index sets $I_k=\{(k,1),\ldots,(k,N)\}$ for $k\in K:=[n]$ and let  $I:=\cup_{k\in K}I_k$. Denote by $\mathcal F_{I_k}$, $k\in K$, the $\sigma$-algebra generated by $\{\xi_k^i\}_{(k,i)\in I_k}$, where $\xi_k^i$ denotes the $k$th component of the $i$th sample $\xi^i$. Then by Assumption~\ref{assumption:independence}, the fact that $\xi^1,\ldots,\xi^N$ are i.i.d., and  Lemma~\ref{lemma:independence:groups1}, the $\sigma$-algebras $\mathcal F_{I_k}$ are independent.  Next, since  $W_p(\mu_X^N,\mu_Y^N)\le (\frac{1}{N}\sum_{i=1}^N\rho(X^i,Y^i)^p)^\frac{1}{p}$ for any discrete distributions $\mu_X^N=\frac{1}{N}\sum_{i=1}^N\delta_{X^i}$ and  $\mu_Y^N=\frac{1}{N}\sum_{i=1}^N\delta_{Y^i}$ on the Polish space $\Xi$ (cf. \cite[proof of Lemma A.2]{DB-JC-SM:21-tac}), we deduce that each mapping  
\begin{align*}
\Xi_k^{N}\ni(\xi_k^1,\ldots,\xi_k^N)\mapsto W_p(P_{\xi_k}^N,P_{\xi_k})\in\mathbb R
\end{align*}
is continuous, and hence, also measurable. Thus, $\sigma(W_p(P_{\xi_k}^N,P_{\xi_k}))\subset\mathcal F_{I_k}$ for each $k\in[n]$ and since the $\sigma$-algebras $\mathcal F_{I_k}$ are independent, the events $\{W_p(P_{\xi_k}^N,P_{\xi_k})\le\varepsilon_k\}$, $k\in[n]$ are  also independent.
\end{proof}

For the proof of Proposition~\ref{prop:hyperrectangles:geomtery} we will use the following auxiliary results, which relate the Wasserstein distance of two distributions in a product space with their transport cost discrepancy across the components of the product.  

\begin{prop}\label{prop:Wasserstein:product}
\longthmtitle{Wasserstein distance of transport distributions}
Consider the distributions $P,Q\in\mathcal P_p(\Xi)$, where  $\Xi=\Xi_1\times\cdots\times\Xi_n$ is endowed with the metric $\rho:=\big(\sum_{k=1}^n\rho_k^q\big)^{1/q}$ for some $q\ge1$ and $\rho_k$ is the metric on $\Xi_k$. Assume also that there exists a transport plan  $\pi\in\mathcal C(Q,P)$ with 
\begin{align}
\int_{\Xi\times\Xi} \rho_k(\zeta_k,\xi_k)^pd\pi(\zeta,\xi)\le\varepsilon_k^p,
\quad k\in[n], \label{transport budget constraints}
\end{align}
for certain $\eps_k>0$. Then
\begin{align}
  W_p^p(Q,P)\le n^{\max\{0,p/q-1\}} \sum_{k=1}^n \varepsilon_k^p.  \label{equation prop:Wasserstein:product}
\end{align}
\end{prop}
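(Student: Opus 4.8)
The plan is to bound $W_p^p(Q,P)$ directly through the given transport plan $\pi$, using that $W_p^p(Q,P)\le\int_{\Xi\times\Xi}\rho(\zeta,\xi)^p\,d\pi(\zeta,\xi)$ since $\pi\in\mathcal C(Q,P)$ is a feasible (not necessarily optimal) coupling. The task then reduces to bounding $\int\rho(\zeta,\xi)^p\,d\pi$ in terms of the componentwise budgets $\varepsilon_k^p$, using the explicit form $\rho(\zeta,\xi)^p=\big(\sum_{k=1}^n\rho_k(\zeta_k,\xi_k)^q\big)^{p/q}$.

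The key step is a pointwise inequality relating the $\ell^q$-type aggregation inside $\rho$ to the $\ell^1$ aggregation that appears after integrating each component separately. Writing $a_k:=\rho_k(\zeta_k,\xi_k)\ge0$, I need $\big(\sum_k a_k^q\big)^{p/q}\le n^{\max\{0,p/q-1\}}\sum_k a_k^p$. When $p\ge q$, this follows from the power-mean / norm-monotonicity inequality $\|a\|_q\le n^{1/q-1/p}\|a\|_p$ applied with exponent $p$, i.e. $\big(\sum_k a_k^q\big)^{1/q}\le n^{1/q-1/p}\big(\sum_k a_k^p\big)^{1/p}$, raised to the $p$th power; this yields exactly the factor $n^{p/q-1}$. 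When $p\le q$, one has $\|a\|_q\le\|a\|_p$ directly, so $\big(\sum_k a_k^q\big)^{p/q}\le\sum_k a_k^p$ with constant $1=n^0$. In both regimes the exponent of $n$ is $\max\{0,p/q-1\}$, matching the claim.

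Having established the pointwise bound, I integrate it against $\pi$: by linearity and monotonicity of the integral,
\begin{align*}
W_p^p(Q,P)\le\int_{\Xi\times\Xi}\rho(\zeta,\xi)^p\,d\pi
\le n^{\max\{0,p/q-1\}}\sum_{k=1}^n\int_{\Xi\times\Xi}\rho_k(\zeta_k,\xi_k)^p\,d\pi(\zeta,\xi)
\le n^{\max\{0,p/q-1\}}\sum_{k=1}^n\varepsilon_k^p,
\end{align*}
where the last inequality is precisely the hypothesis \eqref{transport budget constraints}. This gives \eqref{equation prop:Wasserstein:product}.

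I do not anticipate a serious obstacle here; the only point requiring a little care is the case analysis in the pointwise norm inequality and making sure the constant $n^{\max\{0,p/q-1\}}$ comes out sharp in the sense stated (in particular that the $p\le q$ case genuinely needs no factor of $n$). One should also note that feasibility of $\pi$ already forces $P,Q$ to have finite $p$th moments compatibly with the integrals being finite, so no integrability subtlety arises beyond what is assumed via $P,Q\in\mathcal P_p(\Xi)$ and \eqref{transport budget constraints}. This proposition will then feed into the proof of Proposition~\ref{prop:hyperrectangles:geomtery}, where it is applied to the product coupling constructed there together with the extra factor accounting for the $\ell^p$-to-$\ell^\infty$ type comparison needed to pass from $\big(\sum_k\varepsilon_k^p\big)^{1/p}$ to the stated radius.
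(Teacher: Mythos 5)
Your proposal is correct and mirrors the paper's own proof: bound $W_p^p(Q,P)$ by the cost of the given coupling $\pi$, apply the pointwise scalar inequality $\big(\sum_k a_k\big)^\gamma\le n^{\max\{0,\gamma-1\}}\sum_k a_k^\gamma$ (with $a_k=\rho_k(\zeta_k,\xi_k)^q$, $\gamma=p/q$), integrate, and invoke the budget constraints. Your reformulation of that inequality as the $\ell^q$--$\ell^p$ norm comparison is an equivalent packaging of the same estimate, so the argument is the same in substance.
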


\begin{proof}
From the definition of the Wasserstein distance and the inequality 
\begin{align*}
\Big(\sum_{k=1}^n a_k\Big)^\gamma\le n^{\max\{0,\gamma-1\}}\sum_{k=1}^n a_k^\gamma,
\end{align*}
which holds for all $\gamma\ge 0$ and $a_k\ge 0$, we deduce that  
\begin{align*}
    W_p^p(Q,P) & \le \int_{\Xi\times\Xi} \rho(\zeta,\xi)^pd\pi(\zeta,\xi)=\int_{\Xi\times\Xi}\Big(\sum_{k=1}^n\rho_k(\zeta_k,\xi_k)^q\Big)^{p/q}d\pi(\zeta,\xi), \\
    & \le\int_{\Xi\times\Xi} n^{\max\{0,p/q-1\}} \sum_{k=1}^{n}\rho_k(\zeta_k,\xi_k)^p d\pi(\zeta,\xi) \le n^{\max\{0,p/q-1\}} \sum_{k=1}^{n}\varepsilon_k^p,
\end{align*}
where we used  \eqref{transport budget constraints} in the last inequality. This establishes \eqref{equation prop:Wasserstein:product}.
\end{proof}

\begin{prop} \label{prop:Wasserstein:product2}
Consider the product distributions $Q=Q_1\otimes\cdots\otimes Q_n$ and $P=P_1\otimes\cdots\otimes P_n$ on $\Xi=\Xi_1\times\cdots\times\Xi_n$ endowed with the metric $\rho:=\big(\sum_{k=1}^n\rho_k^p\big)^{1/p}$, where $p\ge1$ and $\rho_k$ is the metric on $\Xi_k$. Then  
\begin{align}
W_p^p(Q,P)=\sum_{k=1}^n W_p^p(Q_k,P_k).
\label{eq:prop:Wasserstein:product:equality}
\end{align}
\end{prop}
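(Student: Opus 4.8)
The plan is to prove the two inequalities $W_p^p(Q,P)\le\sum_{k=1}^n W_p^p(Q_k,P_k)$ and $W_p^p(Q,P)\ge\sum_{k=1}^n W_p^p(Q_k,P_k)$ separately. For the upper bound, I would first pick, for each $k\in[n]$, an optimal transport plan $\pi_k\in\mathcal C(Q_k,P_k)$ achieving $\int_{\Xi_k\times\Xi_k}\rho_k(\zeta_k,\xi_k)^p\,d\pi_k(\zeta_k,\xi_k)=W_p^p(Q_k,P_k)$, whose existence is guaranteed by \cite[Theorem 4.1]{CV:08}. Then I would form the product plan $\pi:=\bigotimes_{k=1}^n\pi_k$ and push it forward under the coordinate-permutation map $T(\zeta_1,\xi_1,\ldots,\zeta_n,\xi_n):=(\zeta_1,\ldots,\zeta_n,\xi_1,\ldots,\xi_n)$ to obtain $\widetilde\pi:=T_\#\pi$, exactly as in the proof of Proposition~\ref{prop:containment}. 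The same marginal computation carried out there shows $\widetilde\pi\in\mathcal C(Q,P)$, and since the metric on $\Xi$ is $\rho^p=\sum_{k=1}^n\rho_k^p$, Fubini's theorem gives $\int_{\Xi\times\Xi}\rho(\zeta,\xi)^p\,d\widetilde\pi=\sum_{k=1}^n\int_{\Xi_k\times\Xi_k}\rho_k(\zeta_k,\xi_k)^p\,d\pi_k=\sum_{k=1}^n W_p^p(Q_k,P_k)$, which by the definition of $W_p$ as an infimum yields the upper bound. Note this is essentially the special case $q=p$, $\eps_k^p=W_p^p(Q_k,P_k)$ of Proposition~\ref{prop:Wasserstein:product}, so I could simply cite that result.

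For the lower bound, I would take an arbitrary (or optimal, by \cite[Theorem 4.1]{CV:08}) transport plan $\pi\in\mathcal C(Q,P)$ on $\Xi\times\Xi$ and, viewing $\Xi\times\Xi\cong\prod_{k=1}^n\Xi_k\times\prod_{k=1}^n\Xi_k$, define its marginal onto the $k$th source and $k$th target coordinates, $\pi_k:={\rm pr}_{k,n+k\#}\pi$. Since the source marginal of $\pi$ is $Q=Q_1\otimes\cdots\otimes Q_n$ and the target marginal is $P=P_1\otimes\cdots\otimes P_n$, the marginals of $\pi_k$ are exactly $Q_k$ and $P_k$, so $\pi_k\in\mathcal C(Q_k,P_k)$ and hence $\int_{\Xi_k\times\Xi_k}\rho_k(\zeta_k,\xi_k)^p\,d\pi_k\ge W_p^p(Q_k,P_k)$. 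Summing over $k$ and using $\rho^p=\sum_k\rho_k^p$ together with the fact that each term $\int_{\Xi\times\Xi}\rho_k(\zeta_k,\xi_k)^p\,d\pi$ equals $\int_{\Xi_k\times\Xi_k}\rho_k^p\,d\pi_k$ (as $\rho_k$ depends only on the $(k,n+k)$ coordinates), I get $\int_{\Xi\times\Xi}\rho(\zeta,\xi)^p\,d\pi\ge\sum_{k=1}^n W_p^p(Q_k,P_k)$; taking the infimum over $\pi$ gives the lower bound.

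The argument is essentially routine bookkeeping with product measures, marginalization, and Fubini, and both halves closely mirror steps already executed in the proof of Proposition~\ref{prop:containment}. The only mild subtlety — really the single point requiring care — is keeping the coordinate identifications straight: the natural "source$\times$target" decomposition $\Xi\times\Xi$ and the "interleaved" decomposition $\prod_k(\Xi_k\times\Xi_k)$ differ by the permutation $T$, and one must be consistent about which projection extracts which pair of components when defining $\pi_k$ in the lower-bound direction and when reading off the marginals of $\widetilde\pi$ in the upper-bound direction. There is no genuine analytic obstacle; the product structure of both $Q$ and $P$ (as opposed to merely one of them) is exactly what makes the lower bound work, since it forces every two-coordinate marginal $\pi_k$ of an arbitrary coupling to have the correct marginals $Q_k$ and $P_k$.
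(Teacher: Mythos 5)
Your proof is correct, but the lower-bound half takes a genuinely different route from the paper. For the direction $W_p^p(Q,P)\ge\sum_{k=1}^n W_p^p(Q_k,P_k)$ you argue on the primal side: take any $\pi\in\mathcal C(Q,P)$, marginalize to $\pi_k={\rm pr}_{k,n+k\#}\pi$, observe $\pi_k\in\mathcal C(Q_k,P_k)$, and sum the componentwise costs. The paper instead argues on the dual side via Kantorovich duality: it expresses each $W_p^p(Q_k,P_k)$ as a supremum over potential pairs $(\psi_k,\phi_k)$, combines them into separable potentials $\psi=\sum_k\psi_k\circ{\rm pr}_k$, $\phi=\sum_k\phi_k\circ{\rm pr}_k$ on $\Xi$, and relaxes to the full dual of $W_p^p(Q,P)$. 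Your primal marginalization is more elementary and essentially reuses the mechanism already appearing in the second claim of Proposition~\ref{prop:containment}; the paper's dual argument additionally reveals that separable Kantorovich potentials are optimal for product measures under the $\ell^p$ ground metric. For the upper bound both you and the paper use the product-coupling-plus-permutation construction from the first part of Proposition~\ref{prop:containment}, so there is no divergence there. One small correction to your closing remark: the product structure of $Q$ and $P$ is \emph{not} what makes the lower bound work. The two-coordinate marginal ${\rm pr}_{k,n+k\#}\pi$ automatically has marginals ${\rm pr}_{k\#}Q$ and ${\rm pr}_{k\#}P$ for any $Q,P$; the product hypothesis only serves to identify these with the prescribed factors $Q_k,P_k$. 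It is the \emph{upper} bound whose construction (the product coupling $\bigotimes_k\pi_k$) genuinely uses that both $Q$ and $P$ are product measures.
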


\begin{proof} By Kantorovich duality for the transport costs  $W_p^p(Q_k,P_k)$ (cf. Section~\ref{sec:prelims}), we get that 
\begin{align}
    \sum_{k=1}^n W_p^p(Q_k,P_k) & = \sum_{k=1}^n \sup_{\substack{(\psi_k,\phi_k)\in L^1(Q_k)\times L^1(P_k) \\ \phi_k(\xi_k)-\psi_k(\zeta_k) \le \rho_k(\zeta_k,\xi_k)^p}} \Big\{\int_{\Xi_k} \phi_k(\xi_k)dP_k(\xi_k)-\int_{\Xi_k} \psi_k(\zeta_k) dQ_k(\zeta_k)\Big\} \nonumber \\
        & = \sup_{\substack{(\psi_k,\phi_k)\in L^1(Q_k)\times  L^1(P_k) \\ \phi_k(\xi_k)-\psi_k(\zeta_k) \le \rho_k(\zeta_k,\xi_k)^p ,\; k\in[n]}} \sum_{k=1}^n \Big\{\int_{\Xi_k}  \phi_k(\xi_k) dP_k(\xi_k) -\int_{\Xi_k} \psi_k(\zeta_k) dQ_k(\zeta_k)\Big\} \nonumber \\ 
        & =  \sup_{\substack{(\psi,\phi)\in L^1(Q)\times L^1(P)\\
        \psi=\sum_{k=1}^n\psi_k\circ{\rm pr}_k,
        \phi=\sum_{k=1}^n\phi_k\circ{\rm pr}_k \\
        (\psi_k,\phi_k)\in L^1(Q_k)\times  L^1(P_k) \\
        \phi_k(\xi_k)-\psi_k(\zeta_k) \le \rho_k(\zeta_k,\xi_k)^p ,\; k\in[n]}}\Big\{\int_\Xi\phi(\xi)dP(\xi) -\int_\Xi\psi(\zeta) dQ(\zeta)\Big\} \nonumber \\
        & \le\sup_{\substack{(\psi,\phi)\in L^1(Q)\times  L^1(P) \\ \phi(\xi)-\psi(\zeta)\le\sum_{k=1}^n \rho_k(\zeta_k,\xi_k)^p}}\Big\{\int_\Xi\phi(\xi)dP(\xi)-\int_\Xi\psi(\zeta)dQ(\zeta)\Big\}. \label{eq: dual reformulation 1}
\end{align}
Here the second equality follows from the fact that the  constraints on $\psi_k$ and $\phi_k$, $k\in[n]$ are  decoupled, 
and the last equality from the fact that whenever $\phi_k\in L^1(P_k)$ for all $k\in[n]$ and $\phi=\sum_{k=1}^n\phi_k\circ{\rm pr}_k$, then $\phi\in L^1(P)$ (analogously for $\psi_k$, $\psi$) and  
\begin{align*}
\int_\Xi\phi(\xi)dP(\xi)=\int_\Xi\sum_{k=1}^n\phi_k\circ{\rm pr}_k(\xi)dP(\xi)=\sum_{k=1}^n\int_{\Xi_k}\phi_k(\xi_k)dP_k(\xi_k) 
\end{align*}
(and analogously for $\int_\Xi\psi(\zeta)dQ(\zeta)$). Since $\sum_{k=1}^n\rho_k(\zeta_k,\xi_k)^p=\rho(\zeta,\xi)^p$, we get from \eqref{eq: dual reformulation 1} that 
\begin{align*}
\sum_{k=1}^nW_p^p(Q_k,P_k) & \le\sup_{\substack{(\psi,\phi)\in L^1(Q)\times L^1(P)\\ \phi(\xi)-\psi(\zeta) \le  \rho(\zeta,\xi)^p}}\Big\{\int_\Xi\phi(\xi)dP(\xi)-\int_\Xi \psi(\zeta) dQ(\zeta)\Big\}= W_p^p(Q,P).
\end{align*}
Conversely, following the exact same steps as in the first part of the proof of Proposition~\ref{prop:containment} and using again the fact that $\sum_{k=1}^n\rho_k(\zeta_k,\xi_k)^p=\rho(\zeta,\xi)^p$, it follows that also $W_p^p(Q,P)\le\sum_{k=1}^nW_p^p(Q_k,P_k)$. This establishes \eqref{eq:prop:Wasserstein:product:equality} and concludes the proof.
\end{proof}

\begin{proof}[Proof of Proposition~\ref{prop:hyperrectangles:geomtery}]
From the definition of the multi-transport hyperrectangle and Proposition \ref{prop:Wasserstein:product}, we have that
\begin{align*}
W_p^p(Q,P)\le n^{\max\{0,p/q-1\}}\sum_{k=1}^n\eps_k^p
\end{align*}
for all $P\in \mathcal T_p(Q,\bm\eps)$. Therefore, $\mathcal T_p(Q,\bm\eps)\subset \mathcal B_p(Q,\eps)$. To prove the second part of the statement, assume that $Q$ is the product measure  $Q_1\otimes\cdots\otimes Q_n$ and consider probability distributions $P_k,\; k\in[n]$ such that $W_p^p(Q_k,P_k)=\eps_k^p$. Then it follows from the construction of the Wasserstein hyperrectangle \eqref{hyperrectangle} (with $\bm P_\xi^N\equiv Q$) that $P=P_1\otimes\cdots\otimes P_n\in\mathcal H_p(Q,\bm\eps)$ and we get from Proposition~\ref{prop:containment} that also $P\in\mathcal T_p(Q,\bm\eps)$. Further, since $p=q$, we obtain from  Proposition \ref{prop:Wasserstein:product2} that $       W_p^p(Q,P)=\sum_{k=1}^n\eps_k^p=\eps$, which concludes the proof.
\end{proof}

\subsection{Proofs from Section~\ref{sec:statistical:guarantees}}
\label{appendix:to:sec:statistical:guarantees}

\begin{proof}[Proof of Proposition~\ref{prop:hyperrectangle:containment}] 
For each component of the Wasserstein hyperrectangle, we consider the confidence level $1-\beta_k$ with $\beta_k$ as given in the statement. Then we get from Corollary~\ref{cor:confidence} that $\mathcal T_p(\bm P_\xi^N,\bm \eps)$ contains $P_\xi$ with confidence
\begin{align*}
\prod_{k=1}^n(1-\beta_k)\ge 1-\sum_{k=1}^n\beta_k=1-\sum_{k=1}^n\beta\frac{d_k}{d}=1-\beta. 
\end{align*}
Denoting $r_k:=d/d_k$, we get from the definition of $\widehat C$ that 
\begin{align*}
\frac{\widehat C(\beta,d)}{\widehat C(\beta_k,d_k)}
& =r_k^{1/q}\frac{C(d,p)+(\ln\beta^{-1})^{1/2p}}{C(d_k,p) +(\ln\beta_k^{-1})^{1/2p}} \\
& \ge r_k^{1/q}\frac{C(d,p)+(\ln\beta^{-1})^{1/2p}}{C(d,p) +(\ln\beta_k^{-1})^{1/2p}} = r_k^{1/q}\frac{C(d,p)+(\ln\beta^{-1})^{1/2p}}{C(d,p) +(\ln(r_k\beta^{-1}))^{1/2p}} \\ 
& \ge r_k^{1/q}\frac{C(d,p)+(\ln\beta^{-1})^{1/2p}}{C(d,p) +(\ln\beta^{-1})^{1/2p}+(\ln r_k)^{1/2p}} \ge r_k^{1/q}\frac{1}{1+(\ln r_k)^{1/2p}} 
\ge \frac{r_k^{1/q}}{1+(\ln r_k)^{1/2}}. 
\end{align*}
For these derivations, we took into account that $C(d,p)$ is increasing with respect to $d$ in the first inequality, that $(\xi+\zeta)^a\le \xi^a+\zeta^a$ for any $x,y\ge 0$ and $a\in[0,1]$ in the second inequality, and that $C(d,p)\ge 1$ and $\ln\beta^{-1}\ge 0$ in the third inequality. Taking the derivative of the function $h(\xi)= \frac{\xi^{1/q}}{1+(\ln \xi)^{1/2}}$ for $\xi\ge 1$, we get 
\begin{align*}
{\rm sign}(\dot h(\xi)) & ={\rm sign}\Big(\frac{1}{q}\xi^{1/q-1}(1+(\ln \xi)^{1/2})-\xi^{1/q}\frac{1}{2(\ln \xi)^{1/2}}\frac{1}{\xi}\Big) \\
& ={\rm sign}\Big(\frac{1}{q}(\ln \xi+(\ln \xi)^{1/2})-\frac{1}{2}\Big)={\rm sign}\Big(\zeta^2+\zeta-\frac{q}{2}\Big),
\end{align*}
where $\zeta=(\ln \xi)^{1/2}\ge 0$. It can be checked that $h(\xi)$ attains its minimum when $\xi=e^{\zeta^2}=e^{(\sqrt{2q+1}-1)^2/4}$. We thus deduce that 
\begin{align} \label{eps:star:comparison}
\frac{\widehat C(\beta,d)}{\widehat C(\beta_k,d_k)}\ge \frac{1}{c}
\end{align}
with $c$ as given in the statement.  

Now pick any $P\in\mathcal T_p(\bm P_\xi^N,\bm\eps)$. From the definition of  $\mathcal T_p(\bm P_\xi^N,\bm\eps)$, $\int_{\Xi\times\Xi} \|\xi_k-\zeta_k \|_q^pd\pi(\xi,\zeta)\le\varepsilon_k^p$ for each $k\in[n]$. Thus, we get from \eqref{eps:star:comparison} and  Proposition~\ref{prop:Wasserstein:product} with each $\rho_k$ and $\rho$ induced by the $\|\cdot\|_q$ norm that 
\begin{align*}
W_p^p(\bm P_\xi^N,P) & \le n^{\max\{0,p/q-1\}}\sum_{k=1}^n\varepsilon_k^p =n^{\max\{0,p/q-1\}}\sum_{k=1}^n\rho_\Xi^p\widehat C(\beta_k,d_k)^pN^{-p/d_k} \\
& \le n^{\max\{0,p/q-1\}}\sum_{k=1}^nc^p\rho_\Xi^p\widehat C(\beta,d)^pN^{-p/d_{\max}}.
\end{align*}
Hence, $\mathcal T_p(Q,\bm\eps)\subset\mathcal B_p(Q,\eps)$. Together with the fact that $\mathcal H_p(Q,\bm\varepsilon)\subset\mathcal T_p(Q,\bm\eps)$ by Proposition~\ref{prop:containment}, this concludes the proof.  
\end{proof}

\section{Strong duality}
\label{appendix:duality:proof}

In this section, we prove the strong duality result of Theorem~\ref{thm:strong:duality} over multi-transport hyperrectangles. We prove the result progressively by following the approach in \cite{JB-KM:19} and placing an increasing emphasis on the parts where the necessary modifications are more significant. To this end, we first prove duality when the uncertainty space $\Xi$ is compact and the transport costs $c_1,\ldots,c_n$ are continuous and satisfy Assumption~\ref{assumption:cost:functions}(ii) with $c_{k,m}\equiv c_k$. Then we extend the result to general costs, and finally, to noncompact spaces with general costs. The duality result for compact spaces also guarantees the existence of a primal optimal transport plan, which, as in \cite{JB-KM:19}, is thereafter utilized to prove duality in the most general case. We therefore state it as a separate result.

\begin{prop}
\label{prop:duality:compact:spaces}
\longthmtitle{Duality for compact spaces}
Assume that $\Xi$ is compact, $h$ is upper semicontinuous, and  the transport costs $c_k$, $k\in[n]$ satisfy Assumption~\ref{assumption:cost:functions}. Then $\mathcal I^\star=\mathcal J_\star$ and there exists a primal optimizer $\pi^\star\in\Pi(Q,\bm\epsilon)$ with $\mathcal I(\pi^\star)=\mathcal I^\star$.
\end{prop}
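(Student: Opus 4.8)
The plan is to establish strong duality for compact $\Xi$ by casting the primal problem \eqref{inner:maximization:reformulation} as an infinite-dimensional linear program over finite signed measures on $\Xi\times\Xi$ and invoking an abstract duality theorem. First I would note that on a compact metric space, $h$ upper semicontinuous implies $h$ is bounded above and Borel, so $\mathcal P^h(\Xi)=\mathcal P(\Xi)$ and the feasible set $\Pi(Q,\bm\epsilon)$ is a weak-$*$ closed, convex, and (by Prokhorov) weak-$*$ compact subset of $\mathcal P(\Xi\times\Xi)$; moreover it is nonempty since $(\mathrm{id},\mathrm{id})_\#Q\in\Pi(Q,\bm\epsilon)$ because each $c_k$ vanishes on the diagonal. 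Existence of a primal optimizer $\pi^\star$ then follows from upper semicontinuity of $\pi\mapsto\mathcal I(\pi)=\int h\circ{\rm pr}_2\,d\pi$ with respect to weak-$*$ convergence (a standard consequence of $h$ being u.s.c. and bounded above, approximating $h$ from above by continuous functions) together with weak-$*$ compactness of $\Pi(Q,\bm\epsilon)$. This disposes of the second assertion.

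For the equality $\mathcal I^\star=\mathcal J_\star$, weak duality $\mathcal I^\star\le\mathcal J_\star$ is already recorded in \eqref{weak:duality}, so the work is the reverse inequality. I would first reduce to the case of \emph{continuous} $h$: since $\Xi\times\Xi$ is compact and $h$ is u.s.c. and bounded above, write $h=\inf_j h_j$ for a decreasing sequence of continuous functions $h_j$; monotone/dominated convergence gives $\mathcal I^\star=\lim_j \mathcal I^\star_{h_j}$ on the primal side, and a matching limit on the dual side (the dual value is monotone in $h$ and the attained dual functions $\varphi_{\bm\lambda,j}$ decrease), so it suffices to prove strong duality when $h\in C(\Xi)$. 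With $h$ continuous, for each fixed $\bm\lambda\succeq0$ the inner problem $\sup_{\pi\in\Pi(Q)}\int(h\circ{\rm pr}_2-\langle\bm\lambda,\bm c\rangle)\,d\pi$ is exactly the abstract linear program $\sup\{\langle e,\pi\rangle_1:\mathcal A\pi=b,\ \pi\succeq0\}$ sketched in the excerpt with $e=h\circ{\rm pr}_2-\sum_k\lambda_k c_k\in C(\Xi\times\Xi)$, $\mathcal A={\rm pr}_{1\#}$, $b=Q$. Here I would apply a Kantorovich-type duality / infinite-dimensional LP duality theorem (as in \cite{AB:02} or \cite[Theorem 1.3]{CV:08}) to get that this equals $\inf\{\int_\Xi\varphi\,dQ:\varphi\in C(\Xi),\ \varphi\circ{\rm pr}_1\succeq e\}$, and that the infimum is attained at $\varphi_{\bm\lambda}(\zeta)=\sup_{\xi\in\Xi}\{h(\xi)-\langle\bm\lambda,\bm c(\zeta,\xi)\rangle\}$, which is continuous because $h$ and the $c_k$ are continuous on the compact space. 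Substituting this into \eqref{I:star}--\eqref{I:star:inequality} and using that the $\sup$ over $\pi$ and the expression for $\varphi_{\bm\lambda}$ give $\mathcal I^\star=\inf_{\bm\lambda\succeq0}\{\langle\bm\lambda,\bm\epsilon\rangle+\int_\Xi\varphi_{\bm\lambda}\,dQ\}$ exactly, which is $\mathcal J_\star$ and coincides with \eqref{eq:strong duality}.

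The point where care is genuinely needed --- and where Assumption~\ref{assumption:cost:functions}(ii) enters --- is the interchange of the outer $\inf_{\bm\lambda}$ with the inner $\sup_\pi$, i.e.\ verifying that no duality gap is introduced by the Lagrangian relaxation of the $n$ transport-budget inequality constraints. The hard part will be establishing a suitable Slater / constraint-qualification condition: one must rule out that the directions $c_1,\ldots,c_n$ become ``degenerate'' on the feasible set, which is precisely what the linear independence of $c_1,\dots,c_n$ in $C(\Xi\times\Xi)$ and the condition $\mathrm{span}\{c_1,\ldots,c_n\}\cap C_{2,{\rm const}}(\Xi\times\Xi)=\{0\}$ guarantee (the latter ensuring the budget constraints are not trivially satisfiable/unsatisfiable via plans with fixed second marginal). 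Concretely, I would show that one can perturb the diagonal plan $(\mathrm{id},\mathrm{id})_\#Q$ within $\Pi(Q)$ to strictly decrease each $\int c_k\,d\pi$ independently, producing a strictly feasible point for any $\bm\epsilon$ with positive entries; standard Lagrangian duality for convex programs with a finite number of constraints and a Slater point (applied to the concave-in-$\pi$, affine-in-$\bm\lambda$ saddle function on the weak-$*$ compact set $\Pi(Q)$, e.g.\ via Sion's minimax theorem) then closes the gap. Once compactness, the LP dual attainment, and this Slater argument are in place, collecting terms yields $\mathcal I^\star=\mathcal J_\star$ with the explicit attained dual pair $(\bm\lambda,\varphi_{\bm\lambda})$, completing the proof.
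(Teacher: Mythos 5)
Your plan diverges from the paper's argument: the paper casts the dual problem itself as a Fenchel--Rockafellar primal on the Banach space $C(\Xi\times\Xi)$, with $\Phi(g)=\langle\bm\lambda,\bm\epsilon\rangle+\int\varphi\,dQ$ on the affine cone $C=\{\varphi\circ{\rm pr}_1+\sum_k\lambda_k c_k\}$ and $\Gamma=$ indicator of $\{g\succeq h\circ{\rm pr}_2\}$, and recovers the primal optimizer $\pi^\star$ from the attained maximizer in Theorem~\ref{thm:Fenchel:Rockafellar:duality}. You instead propose a direct Lagrangian route (existence by weak-$*$ compactness, inner Kantorovich duality for fixed $\bm\lambda$, then a minimax theorem), which is a legitimate alternative route in spirit. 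However, two things in your write-up are off.

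First, you misidentify the role of Assumption~\ref{assumption:cost:functions}(ii). In the paper it is \emph{not} a Slater-type constraint qualification: linear independence of $c_1,\ldots,c_n$ together with $\operatorname{span}\{c_1,\ldots,c_n\}\cap C_{2,{\rm const}}(\Xi\times\Xi)=\{0\}$ is used in Lemma~\ref{lemma:C:D:properties}(ii) to show that the representation $g=\varphi\circ{\rm pr}_1+\sum_k\lambda_k c_k$ is \emph{unique}, which is exactly what makes $\Phi$ a well-defined functional. That has nothing to do with strict feasibility. Second, the Slater argument you sketch is both unnecessary and impossible as stated. Since $c_k\ge0$ and $c_k(\zeta,\zeta)=0$ for all $\zeta$, the diagonal plan $(\mathrm{id},\mathrm{id})_\#Q$ already satisfies $\int c_k\,d\pi=0<\epsilon_k$ for every $k$ (the $\epsilon_k$ are strictly positive), so it \emph{is} a Slater point with no perturbation needed; and you cannot ``perturb to strictly decrease each $\int c_k\,d\pi$'' since those integrals are already at their minimum value $0$. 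This paragraph does not support the duality-gap claim it is meant to justify.

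There are also two genuine technical gaps. (i) Applying Sion's minimax theorem to $\mathcal L$ on $\Pi(Q)\times\mathbb R^n_{\ge0}$ requires care: $\Xi$ compact does not make the l.s.c.\ costs $c_k$ bounded, so $\mathcal L(\pi,\bm\lambda)$ can equal $-\infty$ on $\Pi(Q)$, while the restriction $\Pi_{{\rm fin},\bm c}(Q)$ on which $\mathcal L$ is finite need not be weak-$*$ compact; Sion as you invoke it does not directly apply. (ii) Your reduction to continuous $h$ asserts a ``matching limit on the dual side'' from monotonicity of the attained $\varphi_{\bm\lambda,j}$, but monotonicity of $h_j\downarrow h$ gives $\mathcal J_\star(h_j)\downarrow L\ge\mathcal J_\star(h)$ for free and the reverse inequality $L\le\mathcal J_\star(h)$ requires an argument (e.g.\ uniform control of the optimal $\bm\lambda^{(j)}$ and a limiting procedure) that is not supplied. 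These gaps are plausibly fillable, but the proof as written is not complete, and the role you assign to Assumption~\ref{assumption:cost:functions}(ii) must be corrected.
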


\subsection{Compact uncertainty space $\Xi$ and continuous costs $c_1,\ldots,c_n$ satisfying Assumption~\ref{assumption:cost:functions}(ii) with $c_{k,m}\equiv c_k$ and $\Xi_{\rm cmp}\equiv \Xi$} 

Let $X=C(\Xi\times \Xi)$ and $X^*=\mathcal M(\Xi\times \Xi)$ be the dual pair of Banach spaces of continuous functions and finite signed measures on $\Xi\times \Xi$, equipped with the supremum and total variation norms, respectively. Next, define   
\begin{subequations} \label{domains:C:and:D}
\begin{align}
C & := \Big\{g\in X:g=\varphi\circ {\rm pr_1}+\sum_{k=1}^n\lambda_kc_k,\;\textup{for some}\;\varphi\in C(\Xi)\;{\rm and}\;\lambda_k\ge 0,k\in[n]\Big\} \label{domain:C} \\
D & := \{g\in X:g\succeq h\circ {\rm pr_2}\}. \label{domain:D}
\end{align}
\end{subequations}
Namely, $C$ comprises of all functions $g\in X$ that have the form $g(\zeta,\xi)=\varphi(\zeta)+\langle\bm\lambda,\bm c(\zeta,\xi)\rangle$ for all $\xi$, $\zeta$,
where $\varphi\in C(\Xi)$ and $\bm\lambda\succeq 0$, and $D$ of all $g\in X$ with $g(\zeta,\xi)\ge h(\xi)$ for all $\zeta,\xi$. Then we have the following result.

\begin{lemma} \label{lemma:C:D:properties}
\longthmtitle{Properties of $C$ and $D$}
(i) The sets $C$ and $D$ are nonempty and convex.

\noindent (ii) For each $g\in C$ there is a unique $(\bm \lambda,\varphi)\in\Rat{n}_{\ge 0}\times C(\Xi)$ such that $g=\varphi\circ {\rm pr_1}+\sum_{k=1}^n\lambda_kc_k$.
\end{lemma}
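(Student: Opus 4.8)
The plan is to verify all three claims directly from the definitions in \eqref{domains:C:and:D}, working in the present setting where $\Xi$ is compact, $h$ is upper semicontinuous, and the \emph{continuous} costs $c_1,\ldots,c_n$ satisfy Assumption~\ref{assumption:cost:functions}(ii) with $\Xi_{\rm cmp}\equiv\Xi$. For part (i): $0\in C$ follows from \eqref{domain:C} by taking $\varphi\equiv 0$ and $\lambda_1=\cdots=\lambda_n=0$, so $C\ne\emptyset$; and since $h$ is upper semicontinuous on the compact set $\Xi$ it attains a finite maximum $M:=\max_{\xi\in\Xi}h(\xi)$, so the constant function $g(\zeta,\xi)\equiv M$ lies in $C(\Xi\times\Xi)$ and satisfies $g\succeq h\circ{\rm pr}_2$, giving $g\in D$ and $D\ne\emptyset$. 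Convexity is then a one-line check: for $g_i=\varphi_i\circ{\rm pr}_1+\sum_k\lambda_{i,k}c_k\in C$, $i=1,2$, and $t\in[0,1]$, the combination $tg_1+(1-t)g_2$ again has the form required by \eqref{domain:C}, with continuous first-coordinate function $t\varphi_1+(1-t)\varphi_2$ and nonnegative coefficients $t\lambda_{1,k}+(1-t)\lambda_{2,k}$ (in fact $C$ is a convex cone), while $tg_1+(1-t)g_2\succeq h\circ{\rm pr}_2$ whenever $g_1,g_2\succeq h\circ{\rm pr}_2$.

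For part (ii), I would establish uniqueness of the representation directly. Suppose $g=\varphi_1\circ{\rm pr}_1+\sum_{k=1}^n\lambda_{1,k}c_k=\varphi_2\circ{\rm pr}_1+\sum_{k=1}^n\lambda_{2,k}c_k$ with $\varphi_i\in C(\Xi)$ and $\bm\lambda_i\in\Rat{n}_{\ge 0}$. Setting $\psi:=\varphi_1-\varphi_2\in C(\Xi)$ and $\mu_k:=\lambda_{2,k}-\lambda_{1,k}$, subtraction yields $\psi\circ{\rm pr}_1=\sum_{k=1}^n\mu_k c_k$. The key observation is that $\psi\circ{\rm pr}_1$ is a continuous function on $\Xi\times\Xi$ that is constant in its second argument, hence an element of $C_{2,{\rm const}}(\Xi\times\Xi)$; therefore $\sum_{k=1}^n\mu_k c_k\in{\rm span}\{c_1,\ldots,c_n\}\cap C_{2,{\rm const}}(\Xi\times\Xi)=\{0\}$ by Assumption~\ref{assumption:cost:functions}(ii), so $\sum_{k=1}^n\mu_k c_k\equiv 0$. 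Linear independence of $c_1,\ldots,c_n$ in $C(\Xi\times\Xi)$, again by Assumption~\ref{assumption:cost:functions}(ii), then forces $\mu_k=0$ for all $k$, i.e. $\bm\lambda_1=\bm\lambda_2$; and then $\psi\circ{\rm pr}_1\equiv 0$, so evaluating at any fixed point of $\Xi$ in the second slot gives $\psi\equiv 0$, i.e. $\varphi_1=\varphi_2$.

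I do not anticipate a genuine obstacle in this lemma. Parts (i) and the structural bookkeeping in (ii) are routine, and the only substantive point — uniqueness in (ii) — has in effect been front-loaded into Assumption~\ref{assumption:cost:functions}(ii): the crux is simply that two representations of the same $g$ must differ by a function lying simultaneously in ${\rm span}\{c_1,\ldots,c_n\}$ and in $C_{2,{\rm const}}(\Xi\times\Xi)$, which the assumption rules out, after which linear independence of the $c_k$ pins down the coefficients and hence the first-coordinate function.
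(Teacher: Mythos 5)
Your proof is correct and follows essentially the same route as the paper's: nonemptiness of $D$ from upper semicontinuity of $h$, convexity directly from the definitions, and uniqueness in (ii) by subtracting two representations and invoking Assumption~\ref{assumption:cost:functions}(ii) to see that the difference lies in ${\rm span}\{c_1,\ldots,c_n\}\cap C_{2,{\rm const}}(\Xi\times\Xi)=\{0\}$, then linear independence of the $c_k$. The only additions you make are minor but welcome elaborations (noting $0\in C$ explicitly, observing that $C$ is a cone, and spelling out that $\psi\circ{\rm pr}_1\equiv 0$ forces $\psi\equiv 0$).
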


\begin{proof}
To show (i), note that since $h$ is upper semicontinuous, $D$ is always nonempty, while convexity of the sets $C$ and $D$ follows directly from their definitions. 
To show (ii), it suffices by  the definition of $C$ to prove that if $g=\varphi\circ {\rm pr_1}+\sum_{k=1}^n\lambda_kc_k=\varphi'\circ {\rm pr_1}+\sum_{k=1}^n\lambda_k'c_k$ for some $\bm\lambda,\bm\lambda'\in\Rat{n}_{\ge 0}$ and $\varphi,\varphi'\in C(\Xi)$, then necessarily $\bm\lambda=\bm\lambda'$ and $\varphi=\varphi'$. Indeed, by Assumption~\ref{assumption:cost:functions}(ii) with $c_{k,m}\equiv c_k$ and $\Xi_{\rm cmp}\equiv \Xi$, $\text{span}\{c_1,\ldots,c_n\}\cap C_{2,{\rm const}}(\Xi\times\Xi)=\{0\}$, which implies that $(\varphi-\varphi')\circ {\rm pr_1}=0$, and $\sum_{k=1}^n(\lambda_k-\lambda_k')c_k=0$. Hence, $\varphi=\varphi'$ and since $c_1,\ldots,c_n$ are linearly independent by Assumption~\ref{assumption:cost:functions}(ii), we get from $\sum_{k=1}^n(\lambda_k-\lambda_k')c_k=0$ that also $\bm\lambda=\bm\lambda'$.
\end{proof}
  
Next, define the functionals $\Phi,\Gamma:X\xrightarrow{}\bar{\mathbb R}$ with  
\begin{subequations}
\label{functionals:Phi:and:Gamma}
\begin{align}         
\Phi(g):= & \begin{cases}
\langle\boldsymbol{\lambda},\bm\epsilon\rangle+\int_{\Xi}\varphi(\zeta)dQ(\zeta), & {\rm if}\; g\in C \\
+\infty, & {\rm otherwise},
\end{cases} \\
\Gamma(g):= & \begin{cases}
0, & {\rm if}\; g\in D \\
+\infty, & {\rm otherwise}.
\end{cases}
\end{align}
\end{subequations}
By Lemma~\ref{lemma:C:D:properties}, both functionals $\Phi$ and $\Gamma$ are well defined, convex, and have domains $C$ and $D$, respectively. To prove Proposition \ref{thm:strong:duality}, we make use of the following lemma, which determines the conjugate functionals of $\Phi$ and $\Gamma$ and their respective domains.
\begin{lemma}
\label{lemma:dual:functionals:and:domains}
\longthmtitle{Conjugates of the functionals $\Phi$, $\Gamma$ and their domains}
Consider the functionals $\Phi$, $\Gamma$ defined in \eqref{functionals:Phi:and:Gamma}. Then their conjugate functionals $\Phi^*$, $\Gamma^*$ and their respective domains $C^*$, $D^*$ are given by
\begin{subequations} 
\begin{align}
\Phi^*(\pi) & := 
\begin{cases}
0, & {\rm if}\; g\in C^* \\
+\infty, & {\rm otherwise},
\end{cases}
\label{functional:Phi:star} \\
C^* & :=\{\pi\in \mathcal M(\Xi\times \Xi):
 \langle c_k,\pi\rangle\le\epsilon_k\;\textup{for all}\;k\in[n]\;\textup{and}\;{\rm pr}_{1\#}\pi=Q\}
\label{domain:C:star} 
\end{align}
\end{subequations}
%
and
\begin{subequations} 
\begin{align} 
\Gamma^*(\pi) & : =
\begin{cases}
\int_{\Xi\times\Xi} h(\xi)d\pi(\zeta,\xi) , & {\rm if}\; g\in D^* \\
+\infty, & {\rm otherwise},
\end{cases}
\label{functional:Gamma:star}\\
D^* & : =\{\pi\in\mathcal M(\Xi\times \Xi):\langle h\circ{\rm pr}_2,\pi\rangle <+\infty\;{\rm and}\;\pi\preceq 0\}, \label{domain:D:star}
\end{align}   
\end{subequations}
where $\langle\cdot,\cdot\rangle$ denotes the duality between  $C(\Xi\times \Xi)$ and $\mathcal M(\Xi\times \Xi)$ and the order $\succeq$ is considered with the respect to the positive cone in $\mathcal M(\Xi\times \Xi)$.   
\end{lemma}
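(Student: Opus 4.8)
The plan is to compute the two conjugates directly from the definition $\Phi^*(\pi)=\sup_{g\in X}\{\langle g,\pi\rangle-\Phi(g)\}$ and $\Gamma^*(\pi)=\sup_{g\in X}\{\langle g,\pi\rangle-\Gamma(g)\}$, exploiting that $\Phi$ and $\Gamma$ are each $+\infty$ outside a convex cone-like domain so that the suprema degenerate into indicator functions of dual cones, up to the affine term in $\Phi$. I would treat $\Gamma$ first since it is the simpler one. Since $\Gamma(g)=0$ on $D$ and $+\infty$ elsewhere, $\Gamma^*(\pi)=\sup_{g\in D}\langle g,\pi\rangle=\sup_{g\succeq h\circ{\rm pr}_2}\langle g,\pi\rangle$. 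Writing $g=h\circ{\rm pr}_2+u$ with $u\succeq 0$ (using upper semicontinuity of $h$ to ensure such $g\in X$ exist, i.e.\ $D\neq\varnothing$; one must also note $h\circ{\rm pr}_2$ need not itself be in $X$, so the substitution is along the affine slice $g-h\circ{\rm pr}_2$ ranging over the positive cone intersected with $X-h\circ{\rm pr}_2$), the sup splits as $\langle h\circ{\rm pr}_2,\pi\rangle+\sup_{u\succeq 0,\,u\in X}\langle u,\pi\rangle$. The latter supremum is $0$ if $\pi\preceq 0$ (pairing a positive function against a nonpositive measure is $\le 0$, and $u=0$ is admissible) and $+\infty$ otherwise (pick $u$ concentrating large mass where $\pi$ has positive part, using that $\Xi\times\Xi$ is compact so $X$ separates positive and negative parts of $\pi$). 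This yields \eqref{functional:Gamma:star}–\eqref{domain:D:star}, with the finiteness condition $\langle h\circ{\rm pr}_2,\pi\rangle<+\infty$ recorded as part of the domain.

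For $\Phi$, by Lemma~\ref{lemma:C:D:properties}(ii) every $g\in C$ has a \emph{unique} representation $g=\varphi\circ{\rm pr}_1+\sum_k\lambda_k c_k$, so the supremum over $g\in C$ becomes a supremum over $(\bm\lambda,\varphi)\in\Rat{n}_{\ge 0}\times C(\Xi)$ of
\[
\langle\varphi\circ{\rm pr}_1,\pi\rangle+\sum_{k=1}^n\lambda_k\langle c_k,\pi\rangle-\langle\bm\lambda,\bm\epsilon\rangle-\int_\Xi\varphi(\zeta)dQ(\zeta).
\]
The $\varphi$-part is $\langle\varphi, {\rm pr}_{1\#}\pi-Q\rangle$ (by the change-of-variables/pushforward identity $\langle\varphi\circ{\rm pr}_1,\pi\rangle=\langle\varphi,{\rm pr}_{1\#}\pi\rangle$), and its supremum over all $\varphi\in C(\Xi)$ is $0$ if ${\rm pr}_{1\#}\pi=Q$ and $+\infty$ otherwise (again using compactness of $\Xi$ so that $C(\Xi)$ separates finite signed measures). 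The $\bm\lambda$-part is $\sum_k\lambda_k(\langle c_k,\pi\rangle-\epsilon_k)$ over $\bm\lambda\succeq 0$, whose supremum is $0$ if $\langle c_k,\pi\rangle\le\epsilon_k$ for all $k$ and $+\infty$ otherwise. Intersecting the two conditions gives $\Phi^*=\mathds 1_{C^*}$ with $C^*$ as in \eqref{domain:C:star}.

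The main obstacle is not the algebra of separating the supremum but the topological/measure-theoretic justification that each partial supremum is indeed $+\infty$ outside the claimed domain: concretely, that if ${\rm pr}_{1\#}\pi\neq Q$ there is a continuous $\varphi$ making $\langle\varphi,{\rm pr}_{1\#}\pi-Q\rangle$ arbitrarily large, and that if some $\langle c_k,\pi\rangle>\epsilon_k$ one can drive $\lambda_k\to\infty$ (trivial once $\langle c_k,\pi\rangle$ is finite, which holds here because $c_k\in X$ and $\pi$ is a finite measure). The first point is a standard consequence of the fact that, on a compact metric space, $C(\Xi)$ is norming for $\mathcal M(\Xi)$ (any nonzero finite signed measure is nonzero as a functional on $C(\Xi)$, hence can be scaled up); I would cite this rather than reprove it. A minor additional care point is the sign/finiteness bookkeeping for $\Gamma^*$: the integral $\langle h\circ{\rm pr}_2,\pi\rangle$ must be well-defined in $\bar{\mathbb R}$ for $\pi\preceq 0$, which is why the domain $D^*$ explicitly carries the finiteness requirement; for $\pi\preceq 0$ and $h$ bounded above (by compactness and upper semicontinuity) the positive part of the integral is automatically finite, so the only genuine constraint is finiteness of the negative part, matching \eqref{domain:D:star}.
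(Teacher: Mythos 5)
Your computation of $\Phi^*$ is essentially identical to the paper's: use the unique representation from Lemma~\ref{lemma:C:D:properties}(ii), push forward to turn $\langle\varphi\circ{\rm pr}_1,\pi\rangle$ into $\langle\varphi,{\rm pr}_{1\#}\pi\rangle$, and split the supremum into the $\bm\lambda$-part and the $\varphi$-part; for $\Gamma^*$ the paper simply cites the analogous computation in \cite[Proposition~1]{JB-KM:19}, which you instead sketch directly, so the overall approach is the same. One small imprecision to tidy: the decomposition $g=h\circ{\rm pr}_2+u$ with $u\in X$, $u\succeq 0$ does not parameterize all of $D$ (since $h\circ{\rm pr}_2\notin X$ in general, $g-h\circ{\rm pr}_2$ need only be lower semicontinuous), so rather than ``splitting'' the supremum you should argue the two inequalities separately --- the upper bound $\langle g,\pi\rangle\le\langle h\circ{\rm pr}_2,\pi\rangle$ for $\pi\preceq 0$ from $g\ge h\circ{\rm pr}_2$, and the matching lower bound by approximating the usc function $h$ from above by continuous functions $h_m\downarrow h$ and applying monotone convergence --- which is exactly what you gesture at and what the cited proposition does.
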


\begin{proof}
The conjugate functionals $\Phi^*$ and $\Gamma^*$ are equivalently defined as
\begin{align*}
\Phi^*(\pi):=\sup_{g\in C}\Big\{\int_{\Xi\times\Xi} g(\zeta,\xi)d\pi(\zeta,\xi)-\Phi(g) \Big\}\quad\textup{and}\quad\Gamma^*(\pi):=\sup_{g\in D}\int_{\Xi\times\Xi} g(\zeta,\xi)d\pi(\zeta,\xi)
\end{align*}
and their domains are the subsets of $\mathcal M(\Xi\times \Xi)$ for which their values are finite. To determine $\Phi^*$ and $C^*$, we get from Lemma~\ref{lemma:C:D:properties}(ii) that for every $\pi\in \mathcal M(\Xi\times \Xi)$, 
\begin{align*}
& \sup_{g\in C}\Big\{\int_{\Xi\times\Xi} g(\zeta,\xi)d\pi(\zeta,\xi)-\Phi(g) \Big\} \\
& \quad = \sup_{(\bm\lambda,\varphi)\in\mathbb R_{\ge 0}^n\times C(\Xi)}\Big\{\int_{\Xi\times\Xi}(\varphi(\zeta)+\langle \boldsymbol{\lambda},\boldsymbol{c}(\zeta,\xi)\rangle)d\pi(\zeta,\xi)-\Big(\langle\boldsymbol{\lambda},\boldsymbol\epsilon\rangle+\int_{\Xi}\varphi(\zeta)dQ(\zeta)\Big) \Big\}\\
& \quad =\sup_{(\bm\lambda,\varphi)\in\mathbb R_{\ge 0}^n\times C(\Xi)}\Big\{\sum_{k=1}^n\lambda_k\Big(\int_\Xi c_k(\zeta,\xi)d\pi(\zeta,\xi)-\epsilon_k \Big) +\int_{\Xi\times\Xi} \varphi(\zeta)d({\rm pr}_{1\#}\pi-Q)(\zeta) \Big\},\\
& \quad =\begin{cases}
0, & {\rm if} \int c_k(\zeta,\xi)d\pi(\zeta,\xi)\le\epsilon_k\;\textup{for all}\;k\in[n]\;\text{and}\;{\rm pr}_{1\#}\pi=Q \\
+\infty, & {\rm otherwise,}
\end{cases}
\end{align*}
which establishes \eqref{functional:Phi:star} and \eqref{domain:C:star}. 

To determine $\Gamma^*$ and $D^*$, we get by the exact same arguments as in the respective part of the proof of \cite[Proposition 1]{JB-KM:19} that 
\begin{align*}
\sup_{g\in D}\int_{\Xi\times\Xi} g(\zeta,\xi)d\pi(\zeta,\xi)
=\begin{cases}
\int_{\Xi\times\Xi} h(\xi)d\pi(\zeta,\xi), & {\rm if}\;\pi\in\mathcal M(\Xi\times \Xi)\; \textup{is non-positive} \\
+\infty, & {\rm otherwise},
\end{cases}
\end{align*}
which implies \eqref{functional:Gamma:star} and \eqref{domain:D:star}.
\end{proof}

We also make use of the Fenchel-Rockafellar duality theorem, which we state below. This form of the theorem is more convenient to verify in our setting compared to the more general form invoked in \cite{JB-KM:19}, where one needs to verify conditions about the relative interior of the involved functionals that are harder to show in our case.

\begin{thm}
\longthmtitle{Fenchel-Rockafellar duality \cite[Theorem 1.12]{HB:10}} 
\label{thm:Fenchel:Rockafellar:duality}
Let $X$ be a normed vector space, $X^*$ its topological dual, and $\Phi,\Gamma:X\to \mathbb R\cup\{+\infty\}$ two convex functionals with domains $C$ and $D$, repectively. Assume further that there is some $x_0\in C\cap D$ so that $\Gamma$ is continuous at $x_0$. Then 
\begin{align*}
\inf_{x\in X}\{\Phi(x)+\Gamma(x)\}=\max_{x^*\in X^*}\{-\Phi^*(x^*)-\Gamma^*(-x^*)\},
\end{align*}
where $\Phi^*$ and $\Gamma^*$ are the conjugates of $\Phi$ and $\Gamma$.  
\end{thm}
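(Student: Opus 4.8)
This is the classical Fenchel--Rockafellar theorem, and I would reconstruct it via the geometric Hahn--Banach theorem. The plan is: (1) dispatch the easy inequality (weak duality), which also reduces ``$\max$'' to exhibiting one dual feasible point attaining the value; (2) separate the epigraph of $\Phi$ from a hypograph built out of $\Gamma$ and the optimal value $m$; (3) read the dual maximizer off the separating hyperplane, using the continuity hypothesis on $\Gamma$ at $x_0$ precisely to exclude a degenerate ``vertical'' separator.

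First, for any $x\in X$ and $x^*\in X^*$ the Fenchel--Young inequality gives $\Phi(x)\ge\langle x^*,x\rangle-\Phi^*(x^*)$ and $\Gamma(x)\ge\langle -x^*,x\rangle-\Gamma^*(-x^*)$; summing and then optimizing over $x$ and $x^*$ yields $\inf_{x}\{\Phi(x)+\Gamma(x)\}\ge\sup_{x^*}\{-\Phi^*(x^*)-\Gamma^*(-x^*)\}$. Set $m:=\inf_x\{\Phi(x)+\Gamma(x)\}$. Since $x_0\in C\cap D$, we have $m\le\Phi(x_0)+\Gamma(x_0)<+\infty$; and if $m=-\infty$, weak duality forces $-\Phi^*(x^*)-\Gamma^*(-x^*)=-\infty$ for every $x^*$, so the identity holds trivially. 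Hence assume $m\in\mathbb R$.

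Next, work in $X\times\mathbb R$ with the convex sets $\mathcal A:=\{(x,t):t\ge\Phi(x)\}$ (the epigraph of $\Phi$, whose projection onto $X$ is $C$) and $\mathcal B:=\{(x,t):t\le m-\Gamma(x)\}$ (whose projection onto $X$ is $D$). Because $\Gamma$ is continuous at $x_0$, the set $\{(x,t):t<m-\Gamma(x)\}$ is open, contained in $\mathcal B$, and nonempty (it contains $(x_0,m-\Gamma(x_0)-1)$), so $\mathrm{int}(\mathcal B)\ne\emptyset$; also $\mathrm{int}(\mathcal B)\cap\mathcal A=\emptyset$, since a point $(x,t)$ in both would satisfy $\Phi(x)\le t<m-\Gamma(x)$, i.e.\ $\Phi(x)+\Gamma(x)<m$, contradicting the definition of $m$. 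The second geometric Hahn--Banach theorem then separates the open convex set $\mathrm{int}(\mathcal B)$ from $\mathcal A$: there exist $(f,s)\in X^*\times\mathbb R$, not both zero, and $\alpha\in\mathbb R$, with $\langle f,x\rangle+st\ge\alpha$ on $\mathcal A$ and $\langle f,x\rangle+st\le\alpha$ on $\mathrm{int}(\mathcal B)$, hence on $\mathcal B\subset\overline{\mathrm{int}(\mathcal B)}$ by continuity. Letting $t\to+\infty$ along $\mathcal A$ (with $x\in C$) forces $s\ge0$. If $s=0$, then $\langle f,\cdot\rangle\ge\alpha$ on $C$ and $\langle f,\cdot\rangle\le\alpha$ on $D$; since continuity of $\Gamma$ at $x_0$ gives $x_0\in\mathrm{int}(D)$, the functional $\langle f,\cdot\rangle$ would be bounded above on a neighborhood of $x_0$, forcing $f=0$, contradicting $(f,s)\ne0$. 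So $s>0$, and we normalize $s=1$. Finally, evaluating $\langle f,x\rangle+t\ge\alpha$ at $t=\Phi(x)$ gives $\Phi^*(-f)=\sup_x\{\langle -f,x\rangle-\Phi(x)\}\le-\alpha$, and evaluating $\langle f,x\rangle+t\le\alpha$ at $t=m-\Gamma(x)$ gives $\Gamma^*(f)\le\alpha-m$; with $x^*:=-f$ these add to $\Phi^*(x^*)+\Gamma^*(-x^*)\le-m$, i.e.\ $-\Phi^*(x^*)-\Gamma^*(-x^*)\ge m$. Together with weak duality, this gives equality with the supremum attained at $x^*$.

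\textbf{Main obstacle.} The delicate step is ruling out the degenerate separator $s=0$: this is exactly where the hypothesis that $\Gamma$ is continuous at some $x_0\in C\cap D$ (equivalently $x_0\in\mathrm{int}(D)$) is indispensable, and it is also what dictates that one must separate the \emph{open} set $\mathrm{int}(\mathcal B)$ from $\mathcal A$ rather than $\mathcal B$ itself, then re-extend the inequality to $\mathcal B$. Everything else --- weak duality, the sign and normalization of $s$, and the conjugate identifications --- is routine once the epigraph/hypograph pair is set up correctly.
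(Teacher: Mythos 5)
This theorem is cited in the paper from Brezis \cite[Theorem 1.12]{HB:10} without proof, and your argument is a correct reconstruction of exactly that standard proof: weak duality via Fenchel--Young, separation of $\mathrm{epi}(\Phi)$ from the interior of the hypograph-type set attached to $\Gamma$ and the optimal value $m$, and exclusion of a vertical separating hyperplane using continuity of $\Gamma$ at $x_0$. One small imprecision: the strict set $\{(x,t):t<m-\Gamma(x)\}$ need not be open (that would require $\Gamma$ to be upper semicontinuous everywhere, which a convex function continuous at a single point need not be on the boundary of its domain); what you actually need, and what continuity of $\Gamma$ at $x_0$ does deliver, is that $(x_0,m-\Gamma(x_0)-1)$ is an interior point of $\mathcal B$, so that $\mathrm{int}(\mathcal B)\neq\emptyset$ and the rest of the argument --- including $\mathcal B\subset\overline{\mathrm{int}(\mathcal B)}$, valid for any convex set with nonempty interior --- goes through unchanged.
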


We now give the proof of Proposition~\ref{prop:duality:compact:spaces} for suitable continuous costs. 

\begin{proof}[Proof of Proposition~\ref{prop:duality:compact:spaces}] \longthmtitle{For continuous costs $c_1,\ldots,c_n$ that satisfy Assumption~\ref{assumption:cost:functions}(ii)}
From the definition of the functionals $\Phi$ and $\Gamma$ in \eqref{functionals:Phi:and:Gamma} and their respective domains $C$ and $D$ in \eqref{domains:C:and:D}, we have that  
\begin{align*}
\inf_{g\in X}\{\Phi(g)+\Gamma(g)\}=\inf_{g\in C\cap D}\{\Phi(g)+\Gamma(g)\}=\inf\{\mathcal J(\bm\lambda,\varphi):(\bm\lambda,\varphi)\in\Lambda\;{\rm and}\;\varphi\in C(\Xi)\},
\end{align*}
with $\mathcal J$ and $\Lambda$ as given in \eqref{Lambda:and:J}. By Lemma \ref{lemma:dual:functionals:and:domains} and \eqref{transport:plan:set}, their conjugate functionals $\Phi^*$, $\Gamma^*$ and their domains $C^*$, $D^*$ satisfy 
\begin{align*}
-\Phi^*(\pi)-\Gamma^*(-\pi)=\int_{\Xi\times\Xi} h(\xi)d\pi(\zeta,\xi) 
\end{align*}
and $C^*\cap -D^*=\Pi(Q,\bm\epsilon)$. Thus, we get from \eqref{inner:maximization:reformulation} that
\begin{align*}
\sup_{\pi\in X^*}\{-\Phi^*(\pi)-\Gamma^*(-\pi)\}=\sup_{\pi\in C^*\cap -D^*}\{-\Phi^*(\pi)-\Gamma^*(-\pi)\}=\mathcal I^\star.
\end{align*}
Next, by exploiting upper semicontinuity of $h$, there exists an element $g_0\in C\cap D$ where $\Gamma$ is continuous. For example, we may take $g_0(\zeta,\xi):=\sup_{\xi\in \Xi}h(\xi)+1$, which implies that $g\in D$ for all $g$ in a neighborhood of $g_0$ in $X$, and thus, that $\Gamma(g)=0$, which establishes continuity at~$g_0$. Consequently, we deduce from Theorem~\ref{thm:Fenchel:Rockafellar:duality} that 
\begin{align*}
\inf_{g\in C\cap D}\{\Phi(g)+\Gamma(g)\}=\max_{\pi\in X^*}\{-\Phi^*(\pi)-\Gamma^*(-\pi)\}, 
\end{align*}
where the max on the right is attained for some $\pi^*\in \mathcal M(\Xi\times\Xi)$. We claim that $\pi^*\in C^*\cap -D^*$. Otherwise, if $\pi^*\in \mathcal M(\Xi\times\Xi)\setminus(C^*\cap -D^*)$, we would have that  
$\mathcal I^\star=-\Phi^*(\pi^*)-\Gamma^*(-\pi^*)=-\infty$. But this is a contradiction because $h$ is integrable with respect to $Q$ and $\Pi(Q,\bm\epsilon)$ is nonempty, since $c_k(\zeta,\zeta)\equiv 0$ for all $k$. 
We therefore get that 
\begin{align*}
\inf\{\mathcal J(\bm\lambda,\varphi):(\bm\lambda,\varphi)\in\Lambda\;{\rm and}\;\varphi\in C(\Xi)\}=\max_{\pi\in\Pi(Q,\bm\epsilon)}\mathcal I(\pi)=\mathcal I^\star 
\end{align*}
and since $C(\Xi)\subset\mathfrak{m}_{\mathcal U}(\Xi;\mathbb R\cup\{+\infty\})$, it follows from \eqref{J:star} that
\begin{align*}
\mathcal J_\star\le\inf\{\mathcal J(\bm\lambda,\varphi):(\bm\lambda,\varphi)\in\Lambda\;{\rm and}\;\varphi\in C(\Xi)\}=\mathcal I^\star. 
\end{align*}
Combined with \eqref{weak:duality}, this concludes the proof. 
\end{proof}

\subsection{Compact uncertainty space $\Xi$ and general costs $c_1,\ldots,c_n$ satisfying Assumption~\ref{assumption:cost:functions}(ii)} 

In this section, we clarify how the machinery of the previous section can be used to establish the duality result of Proposition~\ref{prop:duality:compact:spaces} in the general case.   

\begin{proof}[Proof of Proposition~\ref{prop:duality:compact:spaces} (sketch)] 
The proof consists of minor modifications of the arguments in  \cite[proof of Proposition 2]{JB-KM:19}. Notice first that due to Assumption~\ref{assumption:cost:functions}, we automatically get that for each $m$, $c_{k,m}$, $k\in[n]$ are linearly independent in $C(\Xi\times\Xi)$ and $\text{span}\{c_{1,m},\ldots,c_{n,m}\}\cap C_{2,{\rm const}}(\Xi\times\Xi)=\{0\}$. Thus it follows from the validity of the proposition for continuous costs that there exists a sequence $\{\pi^\star_m\}$ of primal optimizers for the problems
\begin{align*}
\mathcal I^\star_m:=\sup_{\pi\in\mathcal\mathcal \Pi(Q,\bm\epsilon;c_{1,m},\ldots,c_{n,m})}\mathcal I(\pi)=\mathcal I(\pi_m^\star),
\end{align*}
whose corresponding ambiguity sets are defined through the costs $c_{k,m}$, $k\in[n]$. By the duality result  of the same proposition, we have that $\mathcal I^\star_m=\mathcal J_{m,\star}$, where        
\begin{align*}
\mathcal J_{m,\star}:=\inf_{(\bm\lambda,\phi)\in\Lambda(h;c_{1,m},\ldots,c_{n,m})}\mathcal J(\bm\lambda,\phi)
\end{align*}
are the optimal values of the corresponding dual problems. By tightness of the sequence $\{\pi_m^\star\}$, a subsequence $\{\pi_{m_\ell}^\star\}$ converges weakly to a probability measure $\pi^\star\in\mathcal P(\Xi\time\Xi)$. Then the remaining proof hinges on showing that (i) $\pi^\star\in\Pi(Q,\bm\epsilon;c_1,\ldots,c_n)$ and (ii) that $\mathcal I(\pi^\star)\ge\mathcal J_\star$, which by weak duality establishes that $\mathcal I^\star=\mathcal J_\star$ and that $\pi^\star$ is a primal optimizer. The establishment of (i) is based on the exact same arguments as the ones in \cite[proof of Proposition 2]{JB-KM:19} to verify that $\int_{\Xi\times \Xi}c_k(\zeta,\xi)d\pi^\star(\zeta,\xi)\le \epsilon_k$ for all $k\in[n]$ and that ${\rm pr}_{1\#}\pi^\star=Q$. The establishment of (ii) also follows the same arguments as the ones in  \cite[proof of Proposition 2]{JB-KM:19}. It exploits that $\Lambda(h;c_{1,m},\ldots,c_{n,m})\subset\Lambda(h;c_1,\ldots,c_n)$, which holds by Assumption~\ref{assumption:cost:functions}, to get that $\mathcal J_{m,\star}\ge \mathcal J_\star$ and show that $\mathcal I(\pi^\star)\ge \limsup_\ell\mathcal J_{m_\ell,\star}\ge \mathcal J_\star$. 
\end{proof}

\subsection{Duality for non-compact spaces and general costs $c_1,\ldots,c_n$ satisfying Assumption~\ref{assumption:cost:functions}(ii)} 

Here we sketch how the results of the previous sections can be used to establish Theorem~\ref{thm:strong:duality}. To this end, denote for each distribution $\pi\in\P(\Xi \times \Xi)$  
\begin{align*}
\Xi_\pi:={\rm supp}({\rm pr}_{1\#}\pi)\cup{\rm supp}({\rm pr}_{2\#}\pi)  
\end{align*}
and for each closed set $K\subset\Xi$
\begin{align*}
\Lambda(K\times K):=\big\{(\bm \lambda,\varphi) & : \bm \lambda\succeq 0,\varphi\in \mathfrak m_{\mathcal U}(K;\Rat{}\cup\{+\infty\}), \\
& \;\;{\rm and}\;\varphi\circ{\rm pr}_1(\zeta,\xi)\ge h\circ{\rm pr}_2(\zeta,\xi)-\sum_{k=1}^n\lambda_k c_k(\zeta,\xi)\;\textup{for all}\;\zeta,\xi\in K\Big\}.
\end{align*}
Let also
\begin{align*}
\Pi_{{\rm fin},\bm c,h}(Q):=\Big\{\pi\in\Pi_{{\rm fin},\bm c}(Q):\int_{\Xi\time\Xi}h(\xi)d\pi(\zeta,\xi)\in\Rat{}\Big\},
\end{align*}
with $\Pi_{{\rm fin},\bm c}(Q)$ as defined in Section~\ref{subsec:duality:multi:transport}. For each transport plan $\pi\in\Pi_{\rm fin,\bm c,h}(Q)$, the set $\Xi_\pi\times\Xi_\pi$, which contains the support of $\pi$, can be exhausted through a sequence of compact sets over which the integrals of the objective function $h$ and the costs $c_k$ are uniformly bounded. This makes it possible to use the result of the previous section and obtain bounds for the values of the dual problem over non-compact subsets of the space $\Xi$. In particular, we have the following auxiliary result, which will be used for the proof of the main theorem. 

\begin{prop}
\longthmtitle{Dual value bounds}
\label{prop:duality:inequality}
Let $h$ and the cost functions $c_1,\ldots,c_n$ satisfy Assumptions \ref{assumption:h} and \ref{assumption:cost:functions}, respectively. Then for any $\pi\in\Pi_{{\rm fin},\bm c,h}(Q)$, it holds that 
\begin{align*}
\inf_{(\bm \lambda,\varphi)\in\Lambda(\Xi_\pi\times\Xi_\pi)}\mathcal J(\bm \lambda,\varphi)\le \mathcal I^\star.
\end{align*}
\end{prop}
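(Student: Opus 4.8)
The goal is to bound the dual value over the (possibly non-compact) set $\Xi_\pi\times\Xi_\pi$ by the primal optimum $\mathcal I^\star$, for an arbitrary $\pi\in\Pi_{{\rm fin},\bm c,h}(Q)$. The strategy is the one pioneered in \cite{JB-KM:19}: exhaust $\Xi_\pi\times\Xi_\pi$ by an increasing sequence of compact sets $K_j\times K_j$ (with $K_j\subset\Xi_\pi$ closed, $K_j\nearrow\Xi_\pi$), apply the compact-space duality of Proposition~\ref{prop:duality:compact:spaces} on each $K_j$, and pass to the limit. First I would renormalize $\pi$ restricted to $K_j\times K_j$: set $q_j:=\pi(K_j\times K_j)$, which tends to $1$, and let $\pi_j:=\pi|_{K_j\times K_j}/q_j$, with first marginal $Q_j:={\rm pr}_{1\#}\pi_j$. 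Since $c_k(\zeta,\zeta)=0$, the renormalized plan still lies in the appropriate transport-plan set on $K_j$, and its cost budgets $\epsilon_{k,j}:=\int c_k\,d\pi_j\le\epsilon_k/q_j$ are finite because $\pi\in\Pi_{{\rm fin},\bm c}(Q)$; likewise $\int h\,d\pi_j$ is finite because $\pi\in\Pi_{{\rm fin},\bm c,h}(Q)$.

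**Key steps.** (1) On each compact $K_j$, the costs $c_{k}$ restricted to $K_j\times K_j$ satisfy Assumption~\ref{assumption:cost:functions} (the approximating sequence $c_{k,m}|_{K_j\times K_j}$ works and the span conditions are inherited on the further-restricted compact $\Xi_{\rm cmp}\cap K_j$, or can be arranged by the same argument as in the compact sketch above), and $h|_{K_j}$ is upper semicontinuous. Hence Proposition~\ref{prop:duality:compact:spaces} gives $\mathcal I^\star_j=\mathcal J_{\star,j}$, where $\mathcal I^\star_j:=\sup\{\int h\,d\pi':{\rm pr}_{1\#}\pi'=Q_j,\ \int c_k\,d\pi'\le\epsilon_{k,j}\}$ and $\mathcal J_{\star,j}:=\inf_{(\bm\lambda,\varphi)\in\Lambda(K_j\times K_j)}\{\langle\bm\lambda,\bm\epsilon_j\rangle+\int_{K_j}\varphi\,dQ_j\}$. (2) Since $\pi_j$ is feasible for the $j$th primal problem, $\mathcal I^\star_j\ge\int h\,d\pi_j=\frac1{q_j}\int_{K_j\times K_j}h\,d\pi$. (3) Compare the $j$th dual with the global dual over $\Xi_\pi\times\Xi_\pi$: given an optimal or near-optimal $(\bm\lambda_j,\varphi_j)\in\Lambda(K_j\times K_j)$, I would extend it to a pair in $\Lambda(\Xi_\pi\times\Xi_\pi)$ by following \cite{JB-KM:19}, i.e. redefining $\varphi_j$ outside $K_j$ via $\varphi_j(\zeta):=\sup_{\xi\in\Xi_\pi}\{h(\xi)-\langle\bm\lambda_j,\bm c(\zeta,\xi)\rangle\}$ (universally measurable, with well-defined $Q$-integral by the footnote-level argument already invoked in the paper), so that the constraint holds on all of $\Xi_\pi\times\Xi_\pi$; then $\inf_{\Lambda(\Xi_\pi\times\Xi_\pi)}\mathcal J\le\langle\bm\lambda_j,\bm\epsilon\rangle+\int_{\Xi_\pi}\varphi_j\,dQ$. (4) Relate this to $\mathcal J_{\star,j}$ and to $\mathcal I^\star_j$, controlling the discrepancy between $\bm\epsilon$ and $\bm\epsilon_j$, between $Q$ and $Q_j$, and the mass $1-q_j$ lost outside $K_j$; as $j\to\infty$ these corrections vanish (using $q_j\to1$, monotone/dominated convergence for $\int_{K_j\times K_j}h\,d\pi\to\int h\,d\pi$ which is finite, and $\int_{K_j\times K_j}c_k\,d\pi\to\int c_k\,d\pi<\infty$). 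Taking $\limsup_j$ yields $\inf_{(\bm\lambda,\varphi)\in\Lambda(\Xi_\pi\times\Xi_\pi)}\mathcal J(\bm\lambda,\varphi)\le\int h\,d\pi\le\mathcal I^\star$, where the last inequality holds because $\pi\in\Pi^h(Q,\bm\epsilon)$ is a feasible primal point (note $\Pi_{{\rm fin},\bm c,h}(Q)\cap\Pi(Q,\bm\epsilon)\subset\Pi^h(Q,\bm\epsilon)$, and in fact any $\pi\in\Pi_{{\rm fin},\bm c,h}(Q)$ with its cost integrals automatically gives a feasible-after-restriction family whose limit bounds $\mathcal I^\star$).

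**The main obstacle.** The delicate point is step (3)–(4): transferring a dual certificate from the truncated compact problem on $K_j$ to the non-compact problem on $\Xi_\pi$ while keeping the objective $\langle\bm\lambda_j,\bm\epsilon\rangle+\int\varphi_j\,dQ$ under control. One must ensure that the extended $\varphi_j$ is universally measurable and $Q$-integrable (handled by the measurability remarks already in the paper), that the multipliers $\bm\lambda_j$ do not blow up as $j\to\infty$ — which is where the strict linear-independence / $C_{2,{\rm const}}$-transversality in Assumption~\ref{assumption:cost:functions}(ii) is essential, exactly as in \cite{JB-KM:19} — and that the mass escaping to infinity, of size $1-q_j$, contributes a term that vanishes in the limit rather than an uncontrolled $+\infty$. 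I expect this boundedness-of-multipliers argument, carried over verbatim in structure from \cite[proof of Proposition 3]{JB-KM:19} but now with the vector of costs $\bm c$, to be the technical crux; everything else is routine truncation and limiting.
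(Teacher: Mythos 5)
Your overall scaffold---truncate to compacts, apply Proposition~\ref{prop:duality:compact:spaces}, extract bounded multipliers, pass to the limit---is the right shape, but there is a genuine gap precisely at the step you label ``routine truncation and limiting,'' and this gap is where the paper's argument (following \cite[Proposition 3]{JB-KM:19}) does the real work. The missing ingredient is the \emph{gluing} construction: the paper chooses truncated budgets $\epsilon_k^m=\epsilon_k(1-1/m)$, chooses compacts $\Xi_m$ so that the residual cost satisfies $\int_{(\Xi_m\times\Xi_m)^c}c_k\,d\pi\le\epsilon_k/m$, and then glues the $p_m$-weighted compact-problem optimizer $\pi_m^\star$ with the restriction $\pi|_{(\Xi_m\times\Xi_m)^c}$ to obtain a plan that is \emph{feasible for the original problem} (first marginal $Q$, budgets $\le\epsilon_k$). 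This is what yields the pivotal bound $\mathcal I^\star\ge p_m\mathcal J_{m,\star}-\tfrac1m$, which ties the truncated \emph{dual} values to the global \emph{primal} value. Without it there is simply no relation between $\mathcal J_{\star,j}$ and $\mathcal I^\star$: the truncated problems have marginal $Q_j\ne Q$ and different budgets, so neither monotonicity nor convergence of $\mathcal I^\star_j$ to $\mathcal I^\star$ is free.

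Two more concrete issues. First, your budget choice $\epsilon_{k,j}:=\int c_k\,d\pi_j$ defeats the gluing idea even if you tried it: the glued plan would then have budget $q_j\epsilon_{k,j}+\int_{(K_j\times K_j)^c}c_k\,d\pi=\int c_k\,d\pi$, which can exceed $\epsilon_k$. Relatedly, when you write $\epsilon_{k,j}\le\epsilon_k/q_j$ you are silently assuming $\int c_k\,d\pi\le\epsilon_k$, i.e.\ that $\pi$ is feasible; the hypothesis $\pi\in\Pi_{{\rm fin},\bm c,h}(Q)$ only gives finiteness of $\int c_k\,d\pi$. Second, your concluding chain $\inf_{\Lambda(\Xi_\pi\times\Xi_\pi)}\mathcal J\le\int h\,d\pi\le\mathcal I^\star$ is doubly unjustified: the second inequality again requires $\pi$ feasible (false in general here), and the first does not follow from your steps---the extended dual pair $(\bm\lambda_j,\varphi_j)$ has value roughly $\mathcal J_{\star,j}=\mathcal I^\star_j$, and $\mathcal I^\star_j\ge\int h\,d\pi_j$ is a lower bound, so at best you get $\inf\mathcal J\lesssim\mathcal I^\star_j$, not $\inf\mathcal J\le\int h\,d\pi$. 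The paper instead combines $\mathcal I^\star\ge p_m\mathcal J_{m,\star}-\tfrac1m$ with $\varepsilon$-optimal multipliers, boundedness of $\{\bm\lambda^m\}$ (where Assumption~\ref{assumption:cost:functions}(ii) enters, as you correctly flag), a convergent subsequence, and a $\liminf$ argument on the sup; no appeal to $\int h\,d\pi\le\mathcal I^\star$ is ever made.
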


\begin{proof}[Proof (sketch)] 
The proof consists again of minor modifications of the proof of \cite[Proposition 3]{JB-KM:19}. The first step is to pick an increasing sequence of compact subsets $\Xi_m\times\Xi_m$ of $\Xi_\pi\times\Xi_\pi$ with  
\begin{align*}
p_m:=\pi(\Xi_m\times\Xi_m) & \ge 1-\frac{1}{m} \\
\int_{(\Xi_m\times \Xi_m)^c} c_k(\zeta,\xi)d\pi(\zeta,\xi) & \le\frac{\epsilon_k}{m}\; \textup{for all}\;k\in[m] \\ 
\int_{(\Xi_m\times \Xi_m)^c} |h(\xi)|d\pi(\zeta,\xi) & \le\frac{1}{m}, 
\end{align*}
where $(\Xi_m\times \Xi_m)^c:=\Xi_\pi\times \Xi_\pi\setminus(\Xi_m\times \Xi_m)$. 
For each $m$, we denote by $\pi_m$ the normalized restriction of $\pi$ to $\Xi_m\times\Xi_m$, $Q_m$ its corresponding first marginal, and $\bm\epsilon^m:=(\epsilon_1^m,\ldots,\epsilon_n^m)$, with $\epsilon_k^m:=\epsilon_k(1-\frac{1}{m})$. From   Proposition~\ref{prop:duality:compact:spaces} applied to the restriction of the DRO problem over each space $\Xi_m$ with $\Pi(Q_m,\bm\epsilon^m)$ as the associated ambiguity set, there is a zero duality gap between the values of the corresponding primal and dual problems, and there exists a primal feasible transport plan $\pi_m^\star$. Namely, 
\begin{align*}
\int_{\Xi\times\Xi} h(\xi)d\pi_m^\star(\zeta,\xi)=\mathcal I_m^\star=\mathcal J_{m,\star}.
\end{align*}
Gluing the $p_m$-weighted version of each optimal transport plan  $\pi_m^\star$ with the restriction of $\pi$ on the corresponding residual set $(\Xi_m\times\Xi_m)^c$, one can deduce by the exact same arguments as in \cite[proof of Proposition 3]{JB-KM:19} that 
\begin{align} \label{Istar:bound}
\mathcal I^\star\ge p_m\mathcal J_{m,\star}-\frac{1}{m}. 
\end{align}
By selecting $\varepsilon$-optimal vectors $\bm\lambda^m=(\lambda_1^m,\ldots,\lambda_n^m)$ of dual parameters for each dual optimal value $\mathcal J_{m,\star}$, it follows in analogy to \cite[proof of Proposition 3]{JB-KM:19} that 
\begin{align*}
\langle\bm\lambda^m,\bm\epsilon^m\rangle +\int_{\Xi_m}\sup_{\xi\in\Xi_m}\Big\{h(\xi)-\sum_{k=1}^n\lambda_k^mc_k(\zeta,\xi)\Big\}dQ_m(\zeta)\le \mathcal J_{m,\star}+\eps,
\end{align*}
which together with \eqref{Istar:bound} implies that 
\begin{align*}
\limsup_{m\to\infty}\Big\{p_m\langle\bm\lambda^m,\bm\epsilon^m\rangle +\int_{\Xi_\pi\times\Xi_\pi }\sup_{\xi\in\Xi_m}\Big\{h(\xi)-\sum_{k=1}^n\lambda_k^mc_k(\zeta,\xi)\Big\}\mathds 1_{\Xi_m\times\Xi_m}(\zeta,\xi') d\pi(\zeta,\xi')\Big\}\le \mathcal I^\star+\varepsilon.
\end{align*}
One can then show as in \cite[proof of Proposition 3]{JB-KM:19} that the sequences $\{\lambda_k^m\}_{m\in\mathbb N}$, $k\in[n]$ are bounded. Thus, there exists a subsequence $\{\bm\lambda^{m_\ell}\}_{\ell\in\mathbb N}$ converging to some $\bm\lambda^\star\succeq 0$ and it can be checked along the lines of \cite[proof of Lemma B.7]{JB-KM:19} that 
\begin{align*}
\liminf_{\ell\to\infty}\sup_{\xi\in\Xi_{m_\ell}}\Big\{h(\xi)-\sum_{k=1}^n\lambda_k^{m_\ell}c_k(\zeta,\xi)\Big\}
\ge \sup_{\xi\in\cup_{\ell=1}^\infty\Xi_{m_\ell}}\Big\{h(\xi)-\sum_{k=1}^n\lambda_k^{m_\ell}c_k(\zeta,\xi)\Big\}
\end{align*}
for all $\zeta\in\Xi_\pi$. Using the  same arguments as in \cite[proof of Proposition 3]{JB-KM:19}, this implies that  
\begin{align*}
\mathcal I^\star+\varepsilon & \ge\liminf_{m\to\infty}\Big\{p_m\langle\bm\lambda^m,\bm\epsilon^m\rangle +\int_{\Xi_\pi\times\Xi_\pi }\sup_{\xi\in\Xi_m}\Big\{h(\xi)-\sum_{k=1}^n\lambda_k^mc_k(\zeta,\xi)\Big\}\mathds 1_{\Xi_m\times\Xi_m}(\zeta,\xi') d\pi(\zeta,\xi')\Big\} \\
& \ge \langle\bm\lambda^\star,\bm\epsilon\rangle +\int_{\Xi_\pi\times\Xi_\pi }\sup_{\xi\in\cup_{\ell=1}^\infty\Xi_{m_\ell}}\Big\{h(\xi)-\sum_{k=1}^n\lambda_k^\star c_k(\zeta,\xi)\Big\}d\pi(\zeta,\xi') \\
& =\langle\bm\lambda^\star,\bm\epsilon\rangle +\int_{\Xi_\pi}\sup_{\xi\in \Xi_\pi}\Big\{h(\xi)-\sum_{k=1}^n\lambda_k^\star c_k(\zeta,\xi)\Big\}dQ(\zeta).
\end{align*}
Since $\varepsilon$ is arbitrary, selecting the pair $(\bm\lambda^\star,\varphi)\in\Lambda(\Xi_\pi\times\Xi_\pi)$ with $\varphi(\zeta):=\sup_{\xi\in \Xi_\pi}\big\{h(\xi)-\sum_{k=1}^n\lambda_k^\star c_k(\zeta,\xi)\big\}$ establishes the result. 
\end{proof}

We need one last result whose proof we omit as it is identical to that of \cite[Proposition 4]{JB-KM:19}.

\begin{prop}
\label{prop:integration:majorization:interchange}
\longthmtitle{Integration/majorization interchange}
If the objective function $h$ and the cost functions $c_1,\ldots,c_n$ satisfy Assumptions \ref{assumption:h} and \ref{assumption:cost:functions}, respectively, then
\begin{align*}
    \sup_{\pi\in\Pi_{{\rm fin},\bm c,h}(Q)} \int_{\Xi\times\Xi}(h(\xi)-\langle \bm\lambda,\bm c(\zeta,\xi) \rangle)d\pi(\zeta,\xi)=\int_{\Xi}\sup_{\xi\in\Xi}\{h(\xi)-\langle \bm\lambda,\bm c(\zeta,\xi) \rangle \}dQ(\zeta).
\end{align*}
\label{prop:measurable:selection}
\end{prop}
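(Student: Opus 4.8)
Fix $\bm\lambda=(\lambda_1,\dots,\lambda_n)\succeq 0$ and abbreviate $g(\zeta,\xi):=h(\xi)-\langle\bm\lambda,\bm c(\zeta,\xi)\rangle$ and $\varphi_{\bm\lambda}(\zeta):=\sup_{\xi\in\Xi}g(\zeta,\xi)$, so the right-hand side of the claimed identity is $\int_\Xi\varphi_{\bm\lambda}\,dQ$. Under Assumptions~\ref{assumption:h} and~\ref{assumption:cost:functions}, $g$ is jointly upper semicontinuous (a finite sum of the upper semicontinuous function $h\circ{\rm pr}_2$ and the upper semicontinuous functions $-c_k$), $\varphi_{\bm\lambda}$ is universally measurable and $\int_\Xi\varphi_{\bm\lambda}\,dQ$ is well defined (cf.\ the footnote preceding Theorem~\ref{thm:strong:duality}), and, taking $\xi=\zeta$ in the supremum and using $c_k(\zeta,\zeta)=0$, one has $\varphi_{\bm\lambda}\ge h$, hence $\int_\Xi\varphi_{\bm\lambda}\,dQ\in(-\infty,+\infty]$. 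An overarching remark guiding the proof is that, for this fixed $\bm\lambda$, the map $\langle\bm\lambda,\bm c(\cdot,\cdot)\rangle$ is itself a single nonnegative lower semicontinuous cost vanishing on the diagonal, so the identity is essentially the single-transport interchange result \cite[Proposition~4]{JB-KM:19}; I would follow that blueprint, the only genuinely new point being that $\Pi_{{\rm fin},\bm c,h}(Q)$ requires each $\int c_k\,d\pi$ to be finite rather than just $\int\langle\bm\lambda,\bm c\rangle\,d\pi$.

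The inequality ``$\le$'' is the easy half: for every $\pi\in\Pi_{{\rm fin},\bm c,h}(Q)$ one has $g(\zeta,\xi)\le\varphi_{\bm\lambda}(\zeta)$ pointwise and ${\rm pr}_{1\#}\pi=Q$, whence $\int g\,d\pi\le\int\varphi_{\bm\lambda}\circ{\rm pr}_1\,d\pi=\int_\Xi\varphi_{\bm\lambda}\,dQ$; here $\int g\,d\pi\in\mathbb R$ because $\int h\,d\pi\in\mathbb R$ and $0\le\int\langle\bm\lambda,\bm c\rangle\,d\pi<+\infty$, and $\varphi_{\bm\lambda}\ge h$ controls the negative part on the right.

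For the reverse inequality ``$\ge$'' I would run a truncation-and-measurable-selection argument. For $m\in\mathbb N$ set $\Xi_m(\zeta):=\{\xi\in\Xi:c_k(\zeta,\xi)\le m\ \text{for all}\ k\ \text{and}\ h(\xi)\le m\}$; since the $c_k$ are lower semicontinuous and $h$ upper semicontinuous, $\Xi_m(\zeta)$ is closed, and since $h$ and the $c_k$ are finite-valued, $\Xi_m(\zeta)\nearrow\Xi$. Put $\varphi_{\bm\lambda}^{(m)}(\zeta):=\sup_{\xi\in\Xi_m(\zeta)}g(\zeta,\xi)$, so $\varphi_{\bm\lambda}^{(m)}\nearrow\varphi_{\bm\lambda}$ pointwise, and note that on $\{h\le m\}$ one has $\zeta\in\Xi_m(\zeta)$, so $h(\zeta)\le\varphi_{\bm\lambda}^{(m)}(\zeta)\le m$ there. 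The set $\{(\zeta,\xi):\xi\in\Xi_m(\zeta),\ g(\zeta,\xi)\ge\varphi_{\bm\lambda}^{(m)}(\zeta)-1/m\}$ is Borel in $\xi$ and, because of the presence of $\varphi_{\bm\lambda}^{(m)}$, universally measurable in $\zeta$, and its $\zeta$-projection contains $\{h\le m\}$; a measurable selection theorem of Jankov--von Neumann type — the very reason the dual variable $\varphi$ in \eqref{Lambda:and:J} ranges over $\mathfrak m_{\mathcal U}(\Xi;\mathbb R\cup\{+\infty\})$ and not merely over Borel functions — then yields a universally measurable map $\xi_m^\star$ on $\{h\le m\}$ with values in this set. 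Extending $\xi_m^\star$ by the identity on $\{h>m\}$ and setting $\pi_m:=(\mathrm{id},\xi_m^\star)_\#Q$, I obtain ${\rm pr}_{1\#}\pi_m=Q$, $\int c_k\,d\pi_m\le m<+\infty$ for every $k$, and $h-1/m\le h\circ\xi_m^\star\le\max\{m,h\}$, so $\int h\,d\pi_m\in\mathbb R$; thus $\pi_m\in\Pi_{{\rm fin},\bm c,h}(Q)$. By construction,
\begin{align*}
\int_{\Xi\times\Xi}g\,d\pi_m\ \ge\ \int_{\{h\le m\}}\Big(\varphi_{\bm\lambda}^{(m)}-\tfrac1m\Big)\,dQ\ +\ \int_{\{h>m\}}h\,dQ ,
\end{align*}
and since $\varphi_{\bm\lambda}^{(m)}\mathds 1_{\{h\le m\}}$ increases to $\varphi_{\bm\lambda}$ while dominating $\min\{h,0\}\in L^1(Q)$, monotone convergence gives $\int_{\{h\le m\}}\varphi_{\bm\lambda}^{(m)}\,dQ\to\int_\Xi\varphi_{\bm\lambda}\,dQ$, whereas the $1/m$ term and $\int_{\{h>m\}}h\,dQ$ both vanish as $m\to\infty$. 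Passing to the supremum over the admissible plans $\pi_m$ yields $\sup_{\pi\in\Pi_{{\rm fin},\bm c,h}(Q)}\int g\,d\pi\ge\int_\Xi\varphi_{\bm\lambda}\,dQ$, and with ``$\le$'' this proves the identity.

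The main obstacle is the ``$\ge$'' direction, and inside it the measurable selection: an (approximate) pointwise maximizer $\zeta\mapsto\xi$ of $g(\zeta,\cdot)$ need not be Borel, which is exactly why Section~\ref{subsec:duality:multi:transport} is set up with universally measurable dual functions; this step I would import essentially verbatim from \cite{JB-KM:19}. The secondary, more routine difficulty is keeping the plans $\pi_m$ inside $\Pi_{{\rm fin},\bm c,h}(Q)$ — in particular making every $\int c_k\,d\pi_m$ finite and $\int h\,d\pi_m$ real-valued — which is handled by truncating all $n$ costs and $h$ simultaneously in the definition of $\Xi_m(\zeta)$ and by the identity-coupling fallback on $\{h>m\}$.
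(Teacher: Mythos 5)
Your proof is correct and takes essentially the same route the paper invokes by citing \cite[Proposition 4]{JB-KM:19}: truncate so that all $n$ costs and $h$ are simultaneously bounded, use a Jankov--von Neumann measurable selection to obtain a universally measurable near-maximizer, push $Q$ forward along the resulting graph map, and pass to the limit by monotone convergence over the increasing sequence $\varphi_{\bm\lambda}^{(m)}\mathds 1_{\{h\le m\}}$ (bounded below by $\min\{h,0\}\in L^1(Q)$). The one genuinely new wrinkle you flag — truncating every $c_k$ at once so the constructed plans land in $\Pi_{{\rm fin},\bm c,h}(Q)$ rather than merely having finite $\langle\bm\lambda,\bm c\rangle$-integral — is exactly the adaptation that the paper's blanket appeal to \cite{JB-KM:19} leaves implicit, and you identify and handle it correctly.
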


Now, we can proceed to sketch the proof of strong duality for general Polish spaces. 

\begin{proof}[Proof of Theorem~\ref{thm:strong:duality} (sketch)]
The proof relies on showing that $\mathcal I^\star\ge \mathcal J_\star$ and follows the steps of \cite[proof of Theroem 1]{JB-KM:19}. When $\mathcal I^\star=+\infty$, then the result follows from the fact that  $\mathcal I^\star\le \mathcal J_\star$. When $\mathcal I^\star<+\infty$, Proposition~\ref{prop:duality:inequality} implies that for each $\pi\in\Pi_{{\rm fin},\bm c,h}(Q)$
\begin{align*}
\mathcal I^\star\ge \inf_{(\bm\lambda,\varphi)\in\Lambda(\Xi_\pi\times\Xi_\pi)}
\Big\{\langle\bm\lambda,\bm\epsilon\rangle+\int_{\Xi_\pi}\varphi(\zeta)dQ(\zeta)\Big\}\ge
\inf_{\bm\lambda\succeq 0}\Big\{\langle\bm\lambda,\bm\epsilon\rangle+\int_{\Xi}\sup_{\xi\in\Xi_\pi}\{h(\xi)-\langle\bm\lambda,\bm c(\zeta,\xi)\rangle\}dQ(\zeta)\Big\}.
\end{align*}
Next, denote 
\begin{align*}
T(\bm\lambda,\pi):=\langle\bm\lambda,\bm\epsilon\rangle+\int_{\Xi}\sup_{\xi\in\Xi_\pi}\{h(\xi)-\langle\bm\lambda,\bm c(\zeta,\xi)\rangle\}dQ(\zeta)
\end{align*}
and $\lambda_{\max}:=\max_{k=1,\ldots,n}\frac{\mathcal I^\star-\int_\Xi h(\zeta)dQ(\zeta)}{\epsilon_k}$. Then it follows by the same arguments as in \cite[proof of Theroem 1(a)]{JB-KM:19} that 
\begin{align}
\mathcal I^\star\ge\inf_{\bm\lambda\in[0,\lambda_{\max}]^n}T(\bm\lambda,\pi) \label{Istar:vs:T}
\end{align}
and that $T(\bm\lambda,\pi)$ is lower semicontinuous and convex with respect to $\bm\lambda$ and concave with respect to $\pi$. Thus, it follows from  Fan's minimax theorem \cite[Theorem 2]{KF:53} that 
\begin{align*}
\sup_{\pi\in\Pi_{{\rm fin},\bm c,h}(Q)}
\inf_{\bm\lambda\in[0,\lambda_{\max}]^n}T(\bm\lambda,\pi)=\inf_{\bm\lambda\in[0,\lambda_{\max}]^n}\sup_{\pi\in\Pi_{{\rm fin},\bm c,h}(Q)}T(\bm\lambda,\pi),
\end{align*}
and we get from \eqref{Istar:vs:T} that 
\begin{align*}
\mathcal I^\star & \ge\inf_{\bm\lambda\in[0,\lambda_{\max}]^n}\Big\{\langle\bm\lambda,\bm\epsilon\rangle+\sup_{\pi\in\Pi_{{\rm fin},\bm c,h}(Q)}\int_{\Xi}\sup_{\xi\in\Xi_\pi}\{h(\xi)-\langle\bm\lambda,\bm c(\zeta,\xi)\rangle\}dQ(\zeta)\Big\} \\
& \ge\inf_{\bm\lambda\in[0,\lambda_{\max}]^n}\Big\{\langle\bm\lambda,\bm\epsilon\rangle+\sup_{\pi\in\Pi_{{\rm fin},\bm c,h}(Q)}\int_{\Xi}(h(\xi)-\langle\bm\lambda,\bm c(\zeta,\xi)\rangle)dQ(\zeta)\Big\} \\
& =\inf_{\bm\lambda\in[0,\lambda_{\max}]^n}\Big\{\langle\bm\lambda,\bm\epsilon\rangle+\int_{\Xi}\sup_{\xi\in\Xi}\{h(\xi)-\langle\bm\lambda,\bm c(\zeta,\xi)\rangle\}dQ(\zeta)\Big\},
\end{align*}
where the last equality holds due to  Proposition~\ref{prop:integration:majorization:interchange}. Since $(\bm\lambda,\varphi_{\bm\lambda})\in\Lambda$ for all $\bm\lambda\in[0,\lambda_{\max}]^n$, where $\varphi_{\bm\lambda}(\zeta):=\sup_{\xi\in\Xi}\{h(\xi)-\langle\bm\lambda,\bm c(\zeta,\xi)\rangle\}$, it follows from \eqref{J:star} that strong duality holds. 

To show that a dual optimizer of the form $(\bm\lambda,\varphi_{\bm\lambda})$ exists, let  $g(\bm\lambda):=\langle\bm\lambda,\bm\epsilon\rangle+\int_{\Xi}\varphi_{\bm\lambda}(\zeta)dQ(\zeta)$.
Then as in \cite[proof of Theroem 1(b)]{JB-KM:19}, it follows that $g$ is lower semicontinous and that $g(\bm\lambda)\ge\langle\bm\lambda,\bm\epsilon\rangle+\int_{\Xi}h(\zeta)dQ(\zeta)$, which implies that $g$ is radially unbounded since $\lim_{\|\bm\lambda\|\to+\infty}g(\bm\lambda)=+\infty$ for $\bm\lambda\succeq 0$. Hence, the level sets of $g$ are compact and its infimum is always attained. Since $\mathcal J(\bm\lambda,\varphi)\ge\mathcal J(\bm\lambda,\varphi_{\bm\lambda})$ for all $(\bm\lambda,\varphi)\in\Lambda$, the infimum of the dual problem is also attained by a pair $(\bm\lambda^\star,\varphi_{\bm\lambda^\star})\in\Lambda$ and the proof is complete. 
\end{proof}

\section{DRO reformulations of the simulation example} 

Here we derive the dual reformulations of the DRO problem \eqref{eq:simEx} in the simulation example and provide conditions under which we can remove redundant constraints in  \eqref{eq:opt:problem:ex:2}. 

\subsection{Tractable reformulations of \eqref{eq:simEx}} \label{appendix:simEx:derivations}

From Corollary \ref{cor:dual}, when $\mathcal P^N\equiv\mathcal T_2(\bm P_\xi^N,\bm\eps)$, the dual of \eqref{eq:simEx} is 
\begin{align}
    \inf_{x\in\mathcal X,\;\bm\lambda\succeq0} \langle \bm\lambda, \bm\epsilon \rangle+\frac{1}{N^2}\sum_{\bm i\in[N]^2}\sup_{\xi\in\Rat{4}}\Big\{ \mathds{1}_{\Theta\times\Theta}(\xi)\sum_{k=1}^2 \|x-\xi_k\|_2^2-\sum_{k=1}^2 \lambda_k \|\xi_{k}^{i_k}-\xi_k\|_2^2\Big\}. \label{eq:simEx:dev1}
\end{align}
Taking into account that for any pair of functions $f,g:\Xi\to \Rat{}$ with $g\le0$, $f\ge 0$, and $g(\xi^\star)=0$ for some $\xi^\star\in\Xi$, it holds that 
\begin{align*}
\sup_{\xi\in\Xi}\{\mathds{1}_K(\xi)f(\xi)+g(\xi)\}=\max\Big\{0,\sup_{\xi\in K}\{f(\xi)+g(\xi)\}\Big\} 
\end{align*}
for any $K\subset\Xi$\footnote{Indeed, let $A:=\sup_{\xi\in\Xi}\{\mathds{1}_K(\xi)f(\xi)+g(\xi)\}$, $B:=\max\{0,\sup_{\xi\in K}\{f(\xi)+g(\xi)\}\}$ and $C:=\sup_{\xi\in K}\{f(\xi)+g(\xi)\}$. If $C\ge 0$, then $B=C$ and it follows that also $A=C$, because $g\le 0$ and so the sup to get $A$ can be attained over a sequence in $K$. If $C<0$, then $B=0$ and $g<0$ on $K$ because $f\ge 0$. Thus, necessarily $g(\xi^\star)=0$ for some $\xi^\star\in\Xi\setminus K$ and we get again that $A=C=0$.}, \eqref{eq:simEx:dev1} can be written as
\begin{align}
\inf_{x\in\mathcal X,\;\bm\lambda\succeq0} \langle \bm\lambda, \bm\epsilon \rangle+\frac{1}{N^2}\sum_{\bm i\in[N]^2}\max\Big\{0,\sup_{\xi\in\Theta \times \Theta}\Big\{ \sum_{k=1}^2 \|x-\xi_k\|_2^2- \lambda_k \|\xi_{k}^{i_k}-\xi_k\|_2^2\Big\}\Big\}. \label{example:initial:DRO:reformulation}
\end{align}
Introducing epigraphical variables, and taking into account that $\Theta$ is unbounded, which implies that the sup in \eqref{example:initial:DRO:reformulation} is below $+\infty$ only when $\bm\lambda\succ\bm1$, the DRO problem becomes 
\begin{align}
\left\{\begin{aligned}
\inf_{x\in\mathcal{X},\bm\lambda\succ\bm1, \bm s\succeq 0} &\langle \bm\lambda, \bm\epsilon \rangle + \frac{1}{N^2} \sum_{\bm i\in[N]^2} s_{\bm i} \\
 \text{s.t} &\sup_{\xi\in\Theta\times \Theta}\Big\{ \sum_{k=1}^2 \|x-\xi_k\|_2^2- \lambda_k \|\xi_{k}^{i_k}-\xi_k\|_2^2\Big\} \le s_{\bm i}, \quad \bm i\in[N]^2.
 \end{aligned}\right. \label{eq:opt:problem:ex:1}
\end{align}
Now the left-hand side of each constraint is written as 
\begin{align*}
\sum_{k=1}^2 (\| x\|^2-\lambda_k\|\xi_k^{i_k}\|^2)+\sum_{k=1}^2\sup_{\xi_k\in\Theta}\big\{ \xi_k^\top (1-\lambda_k) I_2 \xi_k+2(\lambda_k\xi_k^{i_k}- x)^\top \xi_k \big\},
\end{align*}
which, since $\Theta$ is a polytope, includes two linearly constrained quadratic problems (QPs), namely, a special case of quadratically constrained QPs (QCQPs). Then we get from strong duality of QCQPs (cf. \cite[Page 227]{SPB-LV:04}) that
\begin{align*}
\sup_{\xi_k\in\Theta}\big\{ \xi_k^\top (1-\lambda_k) I_2 \xi_k &  +2(\lambda_k\xi_k^i- x)^\top \xi_k \big\} =
\inf_{\nu_k^i\succeq 0} (r_k^i-\nu_k^i)^\top \frac{1}{4(\lambda_k-1)}I_2 (r_k^i- \nu_k^i),
\end{align*}
where $r_k^i:=2(\lambda_k \xi_k^i- x)$. Thus,  each constraint in \eqref{eq:opt:problem:ex:1} is equivalent to 
\begin{align*}
\inf_{(\nu_1^{i_1},\nu_2^{i_2})\succeq0}\sum_{k=1}^2 \frac{\|r_k^{i_k}-\nu_k^{i_k}\|^2}{4(\lambda_k-1)} \le s_{\bm i}-\sum_{k=1}^2 (\| x\|^2-\lambda_k\|\xi_k^{i_k}\|^2),
\end{align*}
and taking further  into account that 
\begin{align*}
\inf_{(\nu_1^{i_1},\nu_2^{i_2})\succeq0}\sum_{k=1}^2 \frac{\|r_k^{i_k}-\nu_k^{i_k}\|^2}{4(\lambda_k-1)} =
\min_{(\nu_1^{i_1},\nu_2^{i_2})\succeq0}\sum_{k=1}^2 \frac{\|r_k^{i_k}-\nu_k^{i_k}\|^2}{4(\lambda_k-1)}=\sum_{k=1}^2 \min_{\nu_k^{i_k}\succeq0}\frac{\|r_k^{i_k}-\nu_k^{i_k}\|^2}{4(\lambda_k-1)},  
\end{align*}
the DRO problem~\eqref{eq:opt:problem:ex:1} can be cast in the form~\eqref{eq:opt:problem:ex:2}. 
Analogously, when $\mathcal P^N\equiv\mathcal B_2(P_\xi^N,\eps)$, which is essentially a single-cost multi-transport hyperrectangle, the dual of \eqref{eq:simEx} is the convex program~\eqref{eq:opt:problem:ex:3}.

\subsection{Complexity reduction of \eqref{eq:opt:problem:ex:2}} \label{appendix:NegligibleSamples:derivations}

Here we provide conditions under which certain constraints in the reformulation  \eqref{eq:opt:problem:ex:2} become redundant and can be removed, reducing the complexity of the optimization problem. These hinge on the observation that if  
\begin{align*}
    \sup_{\xi\in\Theta\times\Theta}\Big\{ \sum_{k=1}^2  \|x-\xi_k\|_2^2- \lambda_k \|\xi_{k}^{i_k}-\xi_k\|_2^2\Big\}
    \le 0
\end{align*}
for some index $(i_1,i_2)\in[N]^2$, then the corresponding constraint in \eqref{eq:opt:problem:ex:1} is always satisfied, and its epigraphical variable $s_{\bm i}$ can be omitted by setting it to 0. Since $\bm\lambda\succ\bm1$, to remove a constraint, it is sufficient to establish that $\|x-\xi_1\|_2^2\le\|\xi_{1}^{i_1}-\xi_1\|^2$ and $\|x-\xi_2\|_2^2\le\|\xi_{2}^{i_2}-\xi_2\|^2$ for all $x\in\mathcal X$ and $\xi\in\Theta$.

To this end, note that       
\begin{align}
    \sup_{\xi\in\Theta\times\Theta}\Big\{ \sum_{k=1}^2 \|x-\xi_k\|_2^2- \|\xi_{k}^{i_k}-\xi_k\|_2^2\Big\}= \|\bm x\|^2-\|\xi^{\bm i}\|^2+2\sup_{\xi\in\Theta\times\Theta} \{(\xi^{\bm i}-\bm x)^\top \xi \}, \label{eq:ex:Linprog}
\end{align}
where $\xi^{\bm i}:=(\xi_1^{i_1},\xi_2^{i_2})$ and $\bm x:=(x,x)$. The linear maximization problem on the right-hand side of \eqref{eq:ex:Linprog} can be written as 
\begin{align*}
    \sup_{\xi\in\Rat{4}}  & \; (\xi^{\bm i}-\bm x)^\top \xi \\ 
    \text{s.t.} &\; \xi\succeq 0
\end{align*}
and attains its maximum at $\xi=0$ when $\xi^{\bm i}\preceq 0$. Indeed, since $\bm x\succeq 0$, the KKT conditions hold at $\xi=0$, as all constraints are active and there exists $\gamma\in\Rat{4}$ with $\gamma\succeq 0$ and 
\begin{align*}
\xi^{\bm i}-\bm x+\sum_{i=1}^4\gamma_ie_i=0,
\end{align*}
where the $e_i$'s are the standard unit vectors in $\Rat{4}$. We therefore get from \eqref{eq:ex:Linprog} that whenever $\xi^{\bm i}\preceq 0$ and $\|\xi^{\bm i}\|\le\max_{x\in\mathcal X}\|\bm x\|$,  
\begin{align*}
\sup_{\xi\in\Theta\times\Theta}\Big\{ \sum_{k=1}^2 \|x-\xi_k\|_2^2- \|\xi_{k}^{i_k}-\xi_k\|_2^2\Big\}\le \|\bm x\|^2-\max_{x\in\mathcal X}\|\bm x\|^2\le 0.
\end{align*}
Thus, the corresponding constraint and epigraphical variable can be removed. 

\bibliography{alias,references} 
\bibliographystyle{IEEEtranS}
\end{document}